\tikzset{fontscale/.style = {font=\relsize{#1}}}
    \LetLtxMacro{\oldtodo}{\todo}
    \renewcommand{\todo}[2][]{\oldtodo[#1]{#2}\xspace}%
\newcommand{\stringcases}[3]{%
  \romannumeral
    \str@case{#1}#2{#1}{#3}\q@stop
}
\newcommand{\str@case}[3]{%
  \ifnum\pdf@strcmp{\unexpanded{#1}}{\unexpanded{#2}}=\z@
    \expandafter\@firstoftwo
  \else
    \expandafter\@secondoftwo
  \fi
    {\str@case@end{#3}}
    {\str@case{#1}}%
}
\newcommand{\str@case@end}{}
\long\def\str@case@end#1#2\q@stop{\z@#1}
\newif\iflongversion
\renewcommand*{\backref}[1]{}
\renewcommand*{\backrefalt}[4]{%
   \ifcase #1 %
     \relax %
   \or
     (page #4).%
   \else
     (pages #4).%
   \fi%
}
\newtheorem{definition}{Definition}
\newtheorem{lemma}[definition]{Lemma}
\newtheorem{proposition}[definition]{Proposition}
\newtheorem{corollary}[definition]{Corollary}
\newtheorem*{conjecture*}{Conjecture}
\newtheorem{theorem}[definition]{Theorem}
\theoremstyle{remark}
\newtheorem{example}[definition]{Example}
\newtheorem{remark}[definition]{Remark}
\newcommand{\N}{\mathbb{N}}
\newcommand{\Z}{\mathbb{Z}}
\newcommand{\Q}{\mathbb{Q}}
\newcommand{\R}{\mathbb{R}}
\newcommand{\A}{\mathcal A}
\newcommand{\B}{\mathcal B}
\newcommand{\C}{\mathcal C}
\newcommand{\bv}{{\bf v}}
\newcommand{\sigmaC}{{\sigma_{\mathrm{c}}}}
\DeclareMathOperator\per{per}
\newcommand\xtwoheadrightarrow[2][]{%
  \ext@arrow 9999{\longtwoheadrightarrowfill@}{#1}{#2}}
\newcommand\longtwoheadrightarrowfill@{%
  \arrowfill@\relbar\relbar\twoheadrightarrow}
\newcommand{\Tedge}[4]{#1 \xrightarrow{\left. #2 \middle| #3 \right.} #4}
\newcommand{\Twalk}[4]{#1 \xtwoheadrightarrow{\left. #2 \middle| #3 \right.} #4}
\newcommand{\assoc}[1]{\widetilde{#1}}
\newcommand{\mat}[4]{\begin{pmatrix}
#1 & #2 \\ #3 & #4
\end{pmatrix}}
\newcommand{\mata}{\mat{a}{b}{c}{d}}
\newcommand{\D}[1][n]{\mathcal{D}_{#1}}
\newcommand{\Dj}{\D[1]{}}
\newcommand{\DB}[1][n]{\mathcal{DB}_{#1}}
\newcommand{\RB}[1][n]{\mathcal{RB}_{#1}}
\newcommand{\CB}[1][n]{\mathcal{CB}_{#1}}
\newcommand{\LS}[1][n]{\mathcal{LS}_{#1}}
\newcommand{\RS}[1][n]{\mathcal{RS}_{#1}}
\newcommand{\LE}[1][n]{\mathcal{LE}_{#1}}
\newcommand{\RE}[1][n]{\mathcal{RE}_{#1}}
\newcommand{\T}[1][n]{\mathcal{T}_{#1}}
\newcommand{\LclassMax}[1]{\nu_L \left ( #1 \right )}
\newcommand{\RclassMax}[1]{\nu_R \left ( #1 \right )}
\crefname{theorem}{theorem}{theorems}
\crefname{corollary}{corollary}{corollaries}
\crefname{example}{example}{examples}
\crefname{lemma}{lemma}{lemmas}
\crefname{proposition}{proposition}{propositions}
\crefname{definition}{definition}{definitions}
\crefname{observation}{observation}{observations}
\begin{document}

\title{Bounds on the period of the continued fraction after a M\"obius transformation}

% \author{XX
%     \thanks{Electronic address: \texttt{XXX}}}
% \affil{\footnotesize aff}

\author{Hanka Řada}
\affil{\footnotesize Faculty of Nuclear Sciences and Physical Engineering\\ Czech Technical University in Prague\\ Czech Republic}
\author{Štěpán Starosta%
\thanks{Electronic address: \texttt{stepan.starosta@fit.cvut.cz}}}
\affil{\footnotesize Faculty of Information Technology\\ Czech Technical University in Prague\\ Czech Republic}

\date{}

\maketitle

\begin{abstract}
We study M\"obius transformations (also known as linear fractional transformations) of quadratic numbers.
We construct explicit upper and lower bounds on the period of the continued fraction expansion of a transformed number as a function of the period of the continued fraction expansion of the original number.
We provide examples that show that the bound is sharp.
\end{abstract}

% \begin{abstract}
% \noindent XXX
% \end{abstract}

\ifdraft{
   \listoftodos
}

\section{Introduction}
Eventually periodic continued fraction expansions correspond exactly to quadratic irrational numbers.
Some general upper bounds on periods of such an expansion, depending on the number itself, are known, see \cite{Podsypanin,Pohl07}.
In some very specific cases, the exact value is known (and so is the expansion), see \cite{cohn1977length,Rockett-1990,Vladimi-2009,BaHru}.

We study such periods after a transformation which preserves eventual periodicity of the expansion.
Given a nonsingular matrix $N = \begin{pmatrix} a & b \\ c & d \end{pmatrix} \in \Z^{2,2}$, we consider the mapping $h_N: \R \setminus \left\{  - \frac{d}{c} \right\} \to \R$ given by
\[
h_N(x) = \frac{ax + b}{cx + d}.
\]
Such a mapping is called \emph{the M\"obius transformation associated with the matrix N}, it is also sometimes referred to as a linear fractional transformation.
Given a quadratic irrational number $x$, the number $h_N(x)$ is clearly a quadratic irrational number (in the same field).
Our main result is an upper and lower bound on the period of the continued fraction expansion of $h_N(x)$ as a function of the period of the continued fraction expansion of $x$.

To state the main result, we introduce the following notation and definitions.
We have $x = [v, \overline{w}]$ with $v \in \N^\ell$ and $w \in \N^k$ for some $\ell, k \in \N, k \neq 0$ (overline denotes infinite repetition of $w$).
If such a sequence $w$ is the shortest possible, we say that it is the \emph{repetend} of the continued fraction expansion of $x$ (or simply of $x$).
Let $\per(x)$ denote the shortest \emph{period} of a continued fraction of $x$, i.e. the length of the repetend of $x$.

For nonnegative integers $a,c$ not both zero, let $\xi(a,c)$ denote the number of divisions required to compute the $\gcd(a,c)$ using the Euclidean algorithm (ending when $0$ is reached).
Thus, for instance, $\xi(7,0) = \xi(0,7) = 0$, $\xi(7,1) = \xi(1,7) = 1$, and $\xi(13,5) = \xi(5,13) = \xi(5,3) + 1 = \xi(3,2) + 2 = \xi(2,1) + 3 = 4$.

Our main result are the following bounds on $\per(h_N(x))$.

\begin{theorem} \label{thm:main}
Let $x$ be a quadratic irrational number, $h_N$ a M\"obius transformation and $n = |\det N|$.
We have
\[
\frac{1}{S_n} \per(x) \leq \per(h_N(x)) \leq S_n \per(x),
\]
where
\[
S_n = \sum_{\substack{t \in \N \\ t \mid n}} \sum_{\substack{j = t\\ j \not \in J_t }}^{2t-1} \left (2\left\lfloor \frac{\xi(j,t)}{2} \right\rfloor +1 \right)
\]
with $J_t  = \begin{cases}  \emptyset & \text{for } \gcd(t,\frac{n}{t}) = 1, \\ \{i \gcd(t,\frac{n}{t}) \colon i \in \N\} & \text{otherwise.} \end{cases}$
\end{theorem}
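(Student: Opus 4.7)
The plan is to encode both the Möbius transformation and the continued fraction digits as $2\times 2$ integer matrices, and then design a finite-state transducer that reads input CF digits of $x$ and produces output CF digits of $h_N(x)$. Concretely, identify each partial quotient $a \in \N$ with the matrix $\begin{pmatrix} a & 1 \\ 1 & 0 \end{pmatrix}$, so that $x = [a_0; a_1, a_2, \ldots]$ corresponds to an infinite product of such matrices acting on a point; then $h_N(x)$ corresponds to that product with $N$ prepended. Since left multiplication by a unimodular matrix alters only a finite prefix of the CF expansion (the classical equivalence theorem for quadratic irrationals), the period of $h_N(x)$ depends only on the left $\mathrm{GL}_2(\Z)$-coset of $N$, so I may assume $N$ is in a suitable normal form with $|\det N| = n$.

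The central construction maintains an invariant of the form $N A_{i_0} A_{i_1} \cdots A_{i_{\ell-1}} = B_0 B_1 \cdots B_{m-1} N^{(\ell)}$, where $A_{i_k}$ are the input CF matrices, $B_j$ are the output CF matrices emitted so far, and $N^{(\ell)}$ is an intermediate state with $|\det N^{(\ell)}| = n$. Reading one more input $A_{i_\ell}$ and renormalizing $N^{(\ell)} A_{i_\ell}$ back to the chosen normal form amounts to running the Euclidean algorithm on the first column of the intermediate matrix; each division step peels off one output CF matrix. The intermediate states $N^{(\ell)}$ land in a finite set parameterized by pairs $(t,j)$, where $t \mid n$ records the divisor appearing in the Hermite-like normalization and $j$ records the relevant off-diagonal entry. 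The quantity $\xi(j,t)$ measures the number of Euclidean divisions required to normalize, and $2\lfloor \xi(j,t)/2 \rfloor + 1$ arises from a parity adjustment that ensures the resulting factored matrices have the correct shape of CF matrices (each of determinant $-1$).

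With the transducer in place, the upper bound follows by a pigeonhole argument. Because the state space is finite, reading repeatedly the repetend $w$ of $x$ eventually sends the state into a cycle, and one full cycle in state space produces exactly one period of $h_N(x)$. Summing the maximum output length contributed by each reachable state $(t,j)$ yields $\per(h_N(x)) \leq S_n \per(x)$. For the lower bound I apply the same argument to $\mathrm{adj}(N)$, which also has $|\det| = n$ and satisfies $h_{\mathrm{adj}(N)} \circ h_N = \mathrm{id}$ up to the unimodular scalar $\det N$; this gives $\per(x) \leq S_n \per(h_N(x))$.

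The main obstacle is the precise combinatorial identification of the state space. The range $j \in [t, 2t-1]$ and the exclusion set $J_t$ are calibrated to enumerate each transducer state exactly once: $J_t$ removes configurations that reduce by a common factor $\gcd(t, n/t) > 1$ and thus already appear under a smaller divisor in the outer sum. Verifying that every transducer state falls into exactly one $(t,j)$ class, and that the output length from state $(t,j)$ equals $2\lfloor \xi(j,t)/2 \rfloor + 1$ rather than a looser estimate, is where I expect the bulk of the technical work, together with a careful treatment of the parity of the Euclidean algorithm for each parameter pair; the sharpness examples promised in the abstract should guide and confirm this accounting.
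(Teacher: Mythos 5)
Your high-level outline — encode everything as $2\times2$ integer matrices, build a finite transducer, appeal to finiteness of the state space, and derive the lower bound from the inverse transformation — does match the skeleton of the paper's proof, which is built on Raney's transducer $\T{}$ with states in $\DB{}$. The choice to work with the CF matrices $\begin{pmatrix}a&1\\1&0\end{pmatrix}$ rather than the paper's LR representation is a cosmetic difference; the paper explicitly opts for $L,R$ because positivity makes Raney's row/column-balanced machinery run cleanly, but the two encodings are interconvertible.

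However, the pigeonhole step as you state it does not yield the bound $S_n\,\per(x)$, and this is where the real content lies. You write that ``one full cycle in state space produces exactly one period of $h_N(x)$'' and that summing the maximum output per reachable state gives $S_n\per(x)$. But a cycle in the transducer's state sequence corresponds to reading some power $V^\gamma$ of the repetend $V$, not a single copy; the naive estimate would be of order (number of states) $\times$ (max output per transition), which is a completely different quantity from $S_n\per(x)$ and in general much larger. The paper's actual pigeonhole is per-\emph{run} of $V$, not per-state: after normalizing all runs to be long (of length in $[4n,5n-1]$ modulo $n$, via the map $\kappa$ and \Cref{co:vyfukovani_ze_symetrickeho}), every long run of $L$'s forces a passage through some state $M\in\LE{}$ with a residue $i\in\{\LclassMax{M},\dots,2\LclassMax{M}-1\}$. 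If two of the $\gamma$ copies of the same run yield the same $(M,i)$, the closed walk decomposes, so $\gamma \le \sum_{M\in\LE{}}\LclassMax{M}$, and the total output runs contributed by the $\gamma$ readings of one input run is at most $\sum_{M\in\LE{}}\sum_{i} \bigl(\sigma(W_{L,M,i})-1\bigr)$. That is what $S_n$ counts. This normalization and per-run accounting is the bulk of the technical work (\Cref{le:vyfouknuti_n}, \Cref{le:nafouknuti_jedno}, \Cref{coro:kappa_hrana}, \Cref{co:vyfukovani_ze_symetrickeho}, \Cref{le:LS_n_do_RS_n}) and is absent from your sketch.

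A second missing piece: the passage from ``runs of the output word'' to $\per(h_N(x))$ is not immediate. \Cref{le:vypocet_per} shows the period can drop by a factor of $2$ when the output repetend has the form $W_1\assoc{W_1}$, and the paper must show (via symmetric-walk analysis in \Cref{co:vyfukovani_ze_symetrickeho}\ref{it:2)vyfukovani}, \Cref{le:sym_dvojstav}) that this factor-of-$2$ loss on the input side is compensated on the output side; your account does not address this. Finally, your identification of states with pairs $(t,j)$ and the ``output length from state $(t,j)$'' as $2\lfloor\xi(j,t)/2\rfloor+1$ is only an approximation of the truth: the states are all of $\DB{}$, which for composite $n$ is richer than divisor pairs, and $2\lfloor\xi(j,t)/2\rfloor+1$ is $\sigma(W_{L,M,i})-1$ for $M\in\LE{}$ parametrized by $t\mid n$ and $u<\gcd(t,n/t)$ — a bound on runs-per-reading-of-a-run, not a per-state output length. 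The set $J_t$ removes reducible configurations, as you guessed, but the reindexing (the final computation in the proof of \Cref{thm:main}) requires \Cref{le:trida_L}, \Cref{cor:jednoznacnost_LE}, and \Cref{le:LS_n_do_RS_n}. Your closing paragraph correctly flags that this bookkeeping is where the work lies, but the pigeonhole you propose is pointed at the wrong object; without redirecting it to the per-run count of $\LE{}$-visits, the argument does not recover $S_n$.
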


\begin{example}
Let us give an example for $N = \begin{pmatrix}
12 & 1 \\ 17 & 2
\end{pmatrix}$.
We have $|\det N| = 7$ and $S_7 = 24$.
For $x_1 = [\overline{3}]$, we have $\per(x_1) = 1$ and $\per(h_N(x_1)) = 6 \leq S_7 \per(x_1) = 24$.
For $x_2 = [\overline{200}]$, we have $\per(x_2) = 1$ and $\per(h_N(x_2)) = 24 \leq S_7 \per(x_2) = 24$.
For 
\[
x_3 = [-1, 1, 11, \overline{7, 1, 6, 8, 399, 8, 6, 1, 7, 3, 2, 7, 1, 2, 1, 1, 7, 1, 1, 2, 1, 7, 2, 3}],
\]
we have $\per(x_3) = 24$ and $\per(h_N(x_3)) = 1 \geq \frac{\per(x_3)}{S_7} = 1$
\end{example}

The presented proof of \Cref{thm:main} is based on the famous work of Raney \cite{Raney1973} who described transducers which output the continued fraction expansion of $h_N(x)$ while inputting the continued fraction expansion of $x$.

Note that the action of M\"obius transformations on a number $x$ has been explored for instance in \cite{LaSh97} and \cite{Liu} where the authors give bounds on the value of partial coefficients of the number $x$ after transformation.

The proof of the main result may be considered somewhat technical.
In \Cref{sec:prelim}, we first give some necessary notations and recall results of Raney.
A number of additional claims and the proof of \Cref{thm:main} are in \Cref{sec:constr}.
The last section contains remarks and experiment results.

\section{Preliminaries} \label{sec:prelim}

\subsection{LR representation}

For a more detailed description of the computations with continued fractions, we need another representation of positive real numbers.
Before we state it, we introduce the following notation.

\emph{An alphabet} $\A$ is a finite set of symbols.
\emph{A word} over the alphabet $\A$ is a sequence of symbols from this alphabet.
If the sequence is empty, it is \emph{the empty word} and it is denoted by $\varepsilon$.
The set of all finite words over an alphabet $\A$ is denoted by $\A^*$ and the set of all finite and infinite words by $\A^{\N}$.
If we have $v,w \in \A^{\N}$, then $vw$ denotes the concatenation of the words $v$ and $w$.
If there exists $u \in \A^{\N}$ such that $w = vu$, we say that $v$ is \emph{a prefix} of $w$.
Moreover, if $w \neq v$, we say that $v$ is \emph{a proper prefix} of $w$.
Analogously, if $v,w \in \A^{\N}$ and there exists $u \in \A^*$ such that $w = uv$, we say that $v$ is \emph{a suffix} of $w$ and moreover, if $w \neq v$, we say that $v$ is \emph{a proper} suffix of $w$.
A word $u$ is \emph{primitive} if $u = v^k = \underbrace{v v \cdots v}_{k \text{ times}}$ implies $k=1$.

Let $x \in \R^{+} \setminus \Q$ with its continued fraction expansion equal to $[x_0,x_1,x_2,\ldots]$.
Its \emph{LR representation} is the following infinite word over the alphabet $\{L,R\}$:
\[
\bv = R^{x_0} L^{x_1} R^{x_2} L^{x_3} \ldots \quad \in \{L,R\}^\N.
\]
In what follows, we identify a number $x$ with its LR expansion and simply write $x = \bv$.
For example, we have $1 + \sqrt{2} = [\overline{2}] = \overline{R^2L^2}$.

\begin{remark}
The LR representation is originally connected with the Stern-Brocot tree.
The choice of letters $L$ and $R$ also follows from this connection: the two letters stand for ``Left'' and ``Right'' in the tree.
For more information about the relation between the Stern-Brocot tree and continued fractions see for instance \cite{niqui}.
\end{remark}

Let $V$ be a finite word.
A \emph{run} in $V$ is a contiguous subsequence of maximal length which consists of a single letter.
That is, it is the longest repetition of one letter, sometimes also called a tandem array.
The number $\sigma(V)$ denotes the number of all runs in $V$.
For instance, we have $\sigma(LLRRRRL) = 3$.

\subsection{M\"{o}bius transformation and finite state transducers}

In \cite{Gosper}, the author introduces an algorithm that calculates the continued fraction of $h_M(x)$ using the continued fraction of $x$.
The general idea of the algorithm is the following: read as many partial coefficients of $x$ so that we are able to decide on the first partial coefficient of $h_M(x)$ and output it.
The reading phase is usually called \emph{absorption}, the writing phase \emph{emission}.
Then, if needed, continue absorbing the partial coefficients of $x$ and emit the second partial coefficient of $h_M(x)$ when possible.
Repeat the whole procedure: if there are no coefficients to absorb, emit the rest of the output.
The details and more results on the algorithm were given later by Raney, in \cite{Raney1973}.
The main idea of the algorithm is the same but it uses LR representations instead of continued fractions expansions.
In this article, we use the latter approach and work with LR representations since they allow capturing more details of the algorithm.
In what follows, we sum up the needed results of Raney.
We also refer the reader to a more general concept of this idea in \cite[Chapter 5]{Kurka2016}, and for refinements of the results for continued fractions in \cite{LiSta}.

Since for every positive integer $d$ we have $h_{dM}(x) = h_M(x)$, we shall work only with matrices $M$ such that the greatest common divisor of all its elements is $1$.

Following \cite{Raney1973}, we define some special sets of matrices $2 \times 2$ having a key role in the computation of Möbius transformations.

\begin{definition}
For $n \in \N$, $n \neq 0$ we set
\[
\D{} = \left\{ A \in \N^{2,2} \colon \det(A) = n, \gcd(A) = 1 \right \},
\]
where $\gcd(A)$ denotes the greatest common divisor of all elements of $A$.

Furthermore, we define the three following subsets of $\D{}$:
\begin{align*}
\RB{} & = \left\{ \begin{pmatrix} a & b  \\ c & d \end{pmatrix} \in \D{} \colon a > c \text{ and } d > b  \right \}, \\
\CB{} & = \left\{ \begin{pmatrix} a & b  \\ c & d \end{pmatrix} \in \D{} \colon a > b \text{ and } d > c  \right \}, \\
\DB{}& = \RB{} \cap \CB{}.
\end{align*}
% Let $M = \begin{pmatrix} a & b  \\ c & d \end{pmatrix} \in \D{}$.
% \begin{enumerate}
%     \item $M \in \RB{} \Leftrightarrow a > c$ and $d > b$;
%     \item $M \in \CB{} \Leftrightarrow a > b$ and $d > c$;
%     \item $M \in \DB{}\Leftrightarrow M \in \RB{}$ and $M \in \CB{}$.
% \end{enumerate}
\end{definition}

The names of the three above defined sets are abbreviations for ``row-balanced'', ``column-balanced'' and ``double-balanced'', respectively.

For all $n$, the sets $\RB{}$, $\CB{}$, and $\DB{}$ are finite.
If $n$ is a prime number, then by Corollary 4.7 of \cite{Raney1973}, we have $\# \DB{}= n$.

We study the period of eventually periodic continued fractions and therefore we do not need the prefix of  LR representations of the studied numbers, only the tail is important.
In the view of this, the following theorem tells us that we may consider M\"obius transformations associated to a matrix from $\DB{}$.

\begin{theorem}[\cite{Raney1973,LiSta}] \label{thm:dostanu_se_do_Dn}
Let $x$ be an irrational number, $h_M$ a M\"obius transformation, $\gcd(M) = 1$ and $n = \left| \det M \right |$.
There exists an algorithm to construct a matrix $N \in \DB{}$ and a positive irrational number $y$ such that the continued fraction expansion of $h_N(y)$ and the continued fraction expansion of $h_M(x)$ have the same tail.

In particular, if $x$ is a quadratic irrational number, we have
\[
\per(h_M(x)) = \per(h_N(y)).
\]
\end{theorem}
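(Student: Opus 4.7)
The plan is to follow Raney's algorithmic viewpoint from \cite{Raney1973}. Starting from the matrix $M$ and the LR representation of $x$, I would track $h_M(x)$ using the two matrix generators
\[
L = \begin{pmatrix} 1 & 0 \\ 1 & 1 \end{pmatrix}, \quad R = \begin{pmatrix} 1 & 1 \\ 0 & 1 \end{pmatrix}.
\]
These generators act on a positive irrational via $h_L$, $h_R$, and every positive irrational $y$ with LR word $X_1 X_2 X_3 \cdots$ (with $X_i \in \{L,R\}$) is obtained as the corresponding infinite product. The evaluation $h_M(x)$ can then be tracked letter-by-letter by two elementary operations on the current state matrix $N$: \emph{absorption} of the next input letter $X$ replaces $N$ by $N \cdot X$, shortening the input, and \emph{emission} of an output letter $Y$ replaces $N$ by $Y^{-1} \cdot N$, appending $Y$ to what will ultimately be the LR representation of $h_M(x)$. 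Both operations preserve $|\det N| = n$ and $\gcd(N) = 1$, so the state stays in $\D{}$ throughout.

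First, I would dispose of the finitely many preparatory moves needed to bring us inside the positive half-line on which LR representations are defined: absorbing the first few partial quotients of $x$ (equivalently, the first few LR letters once $x$ has been shifted to be positive) reduces matters to a positive irrational input without disturbing the tail of $h_M(x)$. This yields a matrix $N \in \D{}$ and a positive irrational input whose LR word supplies all further absorbed letters.

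Next, I would argue that alternately emitting output letters whenever possible and absorbing otherwise drives the state into $\DB{}$ after finitely many steps. Emission of $L$ (respectively $R$) is possible precisely when $N$ admits $L$ (respectively $R$) as a left factor in $\N^{2,2}$, and a direct computation shows this is equivalent to the row-balanced condition $N \in \RB{}$ (respectively the column-balanced condition $N \in \CB{}$); consequently the inability to emit any letter is precisely $N \in \RB{} \cap \CB{} = \DB{}$. The crucial termination statement, that this absorb/emit loop reaches $\DB{}$ after processing only finitely many input letters, is the core content of Raney's transducer construction, and I would invoke it directly from \cite{Raney1973}. This reachability claim is the main technical obstacle one would face in any fully self-contained argument.

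Finally, stopping at some $N \in \DB{}$ with remaining positive irrational input $y$, the finite word $W$ of letters emitted so far satisfies $h_M(x) = h_W(h_N(y))$ by construction, so the LR representation of $h_M(x)$ equals $W$ concatenated with the LR representation of $h_N(y)$; the two LR words share a common infinite tail, and passing back to continued fractions, so do the two CF expansions. For a quadratic irrational $x$, the remaining input $y$ is again a quadratic irrational in the same field and both CF expansions are eventually periodic, so equality of tails forces the repetends to coincide, giving $\per(h_M(x)) = \per(h_N(y))$.
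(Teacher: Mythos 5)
The paper does not prove this theorem; it cites it from \cite{Raney1973,LiSta} and explicitly refrains from giving details, so I am comparing your sketch against the approach of those references. Your high-level route — track the state matrix under one-letter absorption/emission, invoke Raney for termination, then read off the common tail and equal periods — is the right one, and the deductions at the end (common tail of LR words, hence common tail of CF expansions, hence equal repetend for quadratic irrationals) are fine. The genuine problem is your characterization of when emission is possible. Writing $N = \begin{pmatrix} a & b \\ c & d\end{pmatrix}$, emitting $L$ means $L^{-1}N = \begin{pmatrix} a & b \\ c-a & d-b \end{pmatrix}$ is nonnegative, i.e.\ $c \geq a$ and $d \geq b$; emitting $R$ requires $a \geq c$ and $b \geq d$. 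But $N \in \RB{}$ is \emph{defined} by $a > c$ and $d > b$, which is precisely the negation of the $L$-emission condition, and it blocks $R$-emission as well. Likewise $\CB{}$ is not an emission condition at all — it is the transposed statement $N^T \in \RB{}$. For $N \in \D{}$ (positive determinant rules out $c > a$ together with $b > d$), a short case check gives ``no emission possible'' $\Longleftrightarrow N \in \RB{}$, not $\Longleftrightarrow N \in \DB{}$.

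This matters because $\RB{} \setminus \DB{}$ is nonempty — for example $\begin{pmatrix} 1 & 1 \\ 0 & 3 \end{pmatrix}$ for $n=3$ lies in $\RB{}$ but not $\CB{}$ — so the stopping rule ``halt when no letter can be emitted'' can strand the algorithm outside $\DB{}$, which is exactly the set the theorem asks you to land in. The mechanism that actually delivers a $\DB{}$ state is Raney's Theorem~5.1, quoted in the paper as \Cref{thm:Raney_edge}: from $M \in \RB{}$, absorb until $MV \notin \RB{}$ along a shortest path, and then there is a \emph{unique} nonempty $W$ and $N \in \DB{}$ with $MV = WN$; that factorization, not greedy one-letter emission with your stopping criterion, is what re-enters $\DB{}$. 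You do ultimately appeal to Raney for termination, so the proof is salvageable, but the $\RB{}/\CB{}$ identification as written is backwards and should be replaced by this factorization statement.
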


\emph{A finite state transducer} is the quadruple $(Q,\A,\B,\delta)$ where $Q$ is a finite set of states, $\A$ is the input alphabet, $\B$ is the output alphabet and $\delta \subseteq Q \times \A^* \times\B^* \times Q$ is the transition relation.
The transitions are also called \emph{edges} of this transducer.
The first state in the transition relation is the starting state of this edge.
The word $v \in \A^{*}$ is the input label of this edge.
The word $w \in \B^{*}$ is the output label of this edge and the second state in the relation is the ending state of this edge.

Raney shows that once the problem is transformed to involve a M\"obius transformation of a positive number $y$ with a matrix $N \in \DB{}$, there exists a finite state transducer depending on $n$, denoted $\T{}$, that can be used to determine the LR expansion of $h_N(y)$.
Namely, the input word of this transducer is the LR expansion of $y$, the initial state is given by $N$, and the output word is the LR expansion of $h_N(y)$.

As we are interested only in the repetend of $h_M(x)$, which is the same for $h_N(y)$, we can focus only on the calculation using the transducer $\T{}$.
Thus, we refrain from giving more details on the last theorem and continue with the description of $\T{}$ and its properties.

\subsubsection{Matrices \texorpdfstring{$L$}{L} and \texorpdfstring{$R$}{R} }

We start by identifying the set of all finite words over $\{L,R\}$ with the elements of $\Dj$.
Let $\mu: L \mapsto \begin{pmatrix}
1 & 0 \\ 1 & 1
\end{pmatrix}, R \mapsto \begin{pmatrix}
1 & 1 \\ 0 & 1
\end{pmatrix}$
and for all $V, W \in \{L,R\}^*$, we have $\mu(VW) = \mu(V)\mu(W)$.

\begin{proposition}[\cite{Raney1973}] \label{thm:jednoznacnost_v_D1}
The mapping $\mu$ is an isomorphism of $\{L,R\}^*$ and $\Dj$.
\end{proposition}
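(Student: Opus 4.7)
The plan is to establish that $\mu$ is a well-defined monoid homomorphism and then prove it is a bijection by a ``peeling'' argument that uniquely removes one letter at a time from the left.

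First I would check that $\mu$ takes values in $\Dj$: the matrices $\mu(L)$ and $\mu(R)$ both have determinant $1$, nonnegative integer entries, and entries whose gcd is $1$; since $\Dj$ is closed under multiplication (the determinant is multiplicative, $\N$ is closed under multiplication, and $\gcd(A) \mid \det(A) = 1$ so the gcd condition is automatic once $\det = 1$), this also shows $\mu$ is well-defined and the homomorphism property $\mu(VW) = \mu(V)\mu(W)$ holds by construction.

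For bijectivity the core step is the following \emph{peeling lemma}: for every $A = \begin{pmatrix} a & b \\ c & d \end{pmatrix} \in \Dj \setminus \{I\}$, exactly one of
\[
L^{-1}A = \begin{pmatrix} a & b \\ c-a & d-b \end{pmatrix}, \qquad R^{-1}A = \begin{pmatrix} a-c & b-d \\ c & d \end{pmatrix}
\]
lies in $\Dj$; the first case occurs iff $c \geq a$ and $d \geq b$, the second iff $a \geq c$ and $b \geq d$. I would prove this by case analysis on the signs of $a-c$ and $b-d$. The ``mixed'' case $a < c$, $b > d$ immediately yields $ad - bc < 0$, contradicting $\det A = 1$. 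The other mixed case $a > c$, $b < d$ gives $a \geq c+1$ and $d \geq b+1$, hence $ad - bc \geq (c+1)(b+1) - bc = b + c + 1$, which together with $\det A = 1$ forces $b = c = 0$ and $a = d = 1$, i.e.\ $A = I$, contradicting $A \neq I$. The remaining two cases are mutually exclusive because simultaneous $a = c$ and $b = d$ yields $\det A = 0$.

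Given the peeling lemma, I would finish by strong induction on the entry sum $a+b+c+d$. The base case is $A = I$, which corresponds to the empty word $\varepsilon$. For $A \neq I$, the lemma uniquely determines the initial letter of any preimage word and replaces $A$ by a matrix in $\Dj$ whose entry sum is strictly smaller (the subtraction in the peeling step removes a strictly positive amount, since both inequalities $c \geq a, d \geq b$ cannot be equalities without forcing $\det A = 0$). This simultaneously produces a unique preimage and shows every $A \in \Dj$ is in the image, giving the bijection. The main obstacle is the case-analytic part of the peeling lemma, in particular ruling out the exotic mixed case $a > c$, $b < d$, which is the only place where both nonnegativity and $\det = 1$ must be used in a nontrivial way; everything else reduces to bookkeeping.
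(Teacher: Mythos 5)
The paper does not include a proof of this proposition; it is cited from Raney~1973. Your proof is a correct, self-contained argument by the standard Euclidean/Stern--Brocot peeling method, which is almost certainly the same route Raney takes. The peeling lemma is the right key lemma, and your case analysis ruling out the two ``mixed'' sign patterns (the case $a<c$, $b>d$ via the sign of the determinant; the case $a>c$, $b<d$ via the lower bound $\det A \geq b+c+1$) is correct.

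There is one small slip in the termination step. After peeling $L$, the entry sum drops from $a+b+c+d$ to $a+b+(c-a)+(d-b)=c+d$, so the decrease is $a+b$, not $(c-a)+(d-b)$. Your stated justification --- that $c=a$ and $d=b$ cannot both hold --- establishes $(c-a)+(d-b)>0$, which is a different fact and does not by itself give what you need. The correct observation is simply that $a+b>0$: if $a=b=0$ then $\det A = 0$, a contradiction; in fact one always has $a\geq 1$ for $A\in\Dj$, since $a=0$ forces $\det A = -bc \leq 0$. The analogous remark covers $R$-peeling, where the decrease is $c+d$. This is a one-line fix and does not affect the soundness of the overall argument.
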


Since $\mu$ is an isomorphism, we shall identify the letters $L$ and $R$ with the two matrices, i.e., we shall consider
\[
L = \begin{pmatrix}
1 & 0 \\ 1 & 1
\end{pmatrix}
\quad \text{ and } \quad
R = \begin{pmatrix}
1 & 1 \\ 0 & 1
\end{pmatrix}.
\]

In what follows, we often need to deduce some claims from matrix equations and these equations include mainly the matrices $L$, $R$ and their inverses.
We give the two following lemmas to be used in these cases.

%3.6.1
\begin{lemma}
Let $i,j \in \Z$.
We have
\begin{eqnarray}
LR^{-j}L^{-1} &=& RL^{j}R^{-1}, \label{eq:bubli_Li_vpravo} \\
R^{-1}L^{-i}R &=& L^{-1}R^iL, \label{eq:bubli_R_vpravo} \\
L \left( L^{i+1}R^{j+1} \right)^{-1} R &=& RL^jR^iL. \label{eq:bubliani_D1_1}
\end{eqnarray}
\end{lemma}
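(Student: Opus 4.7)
The proof of all three identities reduces to direct matrix arithmetic, using the explicit closed forms $L^k = \begin{pmatrix} 1 & 0 \\ k & 1 \end{pmatrix}$ and $R^k = \begin{pmatrix} 1 & k \\ 0 & 1 \end{pmatrix}$, valid for every $k \in \Z$ (so $L^{-1}$ and $R^{-1}$ are obtained by negating the off-diagonal entry). The plan is to verify \eqref{eq:bubli_Li_vpravo} by a direct computation, obtain \eqref{eq:bubli_R_vpravo} from it by a transposition symmetry, and then deduce \eqref{eq:bubliani_D1_1} from the first two identities by a short manipulation of exponents.

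For \eqref{eq:bubli_Li_vpravo}, I would compute both sides by iterated multiplication of three $2 \times 2$ matrices; both products turn out to equal $\begin{pmatrix} 1+j & -j \\ j & 1-j \end{pmatrix}$. For \eqref{eq:bubli_R_vpravo}, the cleanest route is to observe that $L^T = R$ and $R^T = L$, so $(L^k)^T = R^k$ and $(R^k)^T = L^k$ for all integer $k$. Taking the transpose of \eqref{eq:bubli_Li_vpravo} (with $j$ replaced by $i$) and using that transposition reverses a product yields precisely \eqref{eq:bubli_R_vpravo}; alternatively, a direct $2 \times 2$ computation works equally well.

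For \eqref{eq:bubliani_D1_1}, I would start by rewriting the inverse: $(L^{i+1} R^{j+1})^{-1} = R^{-(j+1)} L^{-(i+1)}$, so the left-hand side becomes $L R^{-j-1} L^{-i-1} R$. Peeling one factor of $L^{-1}$ off on the inside via $L^{-i-1} = L^{-1} L^{-i}$, the expression factors as $\bigl( L R^{-j-1} L^{-1} \bigr) \, L^{-i} R$. Applying \eqref{eq:bubli_Li_vpravo} with exponent $j+1$ converts the parenthesized factor to $R L^{j+1} R^{-1}$, yielding $R L^{j+1} R^{-1} L^{-i} R$. Now applying \eqref{eq:bubli_R_vpravo} to the trailing block $R^{-1} L^{-i} R = L^{-1} R^i L$ gives $R L^{j+1} \cdot L^{-1} R^i L = R L^j R^i L$, which is the right-hand side of \eqref{eq:bubliani_D1_1}.

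There is no real obstacle here beyond bookkeeping; the only thing to watch is the off-by-one shifts when peeling a single $L^{\pm 1}$ off the exponent $i+1$ or $j+1$ and making sure the indices line up with the statements of \eqref{eq:bubli_Li_vpravo} and \eqref{eq:bubli_R_vpravo} when they are invoked.
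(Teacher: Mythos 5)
Your proposal is correct and follows essentially the same route as the paper: verify the first two identities by direct $2\times 2$ matrix computation and then deduce the third by rewriting the inverse and applying the first two; your only deviation is applying \eqref{eq:bubli_Li_vpravo} before \eqref{eq:bubli_R_vpravo} where the paper does the reverse, and the (nice, but inessential) observation that \eqref{eq:bubli_R_vpravo} is the transpose of \eqref{eq:bubli_Li_vpravo}, neither of which changes the substance.
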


\begin{proof}
The first two equalities may be verified by direct computation.
The last equality follows from the combination of \eqref{eq:bubli_R_vpravo} and \eqref{eq:bubli_Li_vpravo} as follows:
\[
L \left( L^{i+1}R^{j+1} \right)^{-1} R = L R^{-j-1} L^{-i-1} R \overset{\eqref{eq:bubli_R_vpravo}}{=} L R^{-j}L^{-1}R^{i+1}L \overset{\eqref{eq:bubli_Li_vpravo}}{=} RL^jR^iL. \qedhere
\]
\end{proof}

\begin{lemma} \label{st:bublani_D1}
If $W \in \Dj$,
then
\[
L \left( LWR \right)^{-1} R \in \Dj \quad \text{ and } \quad  R \text{ is a prefix of } L \left( LWR \right)^{-1} R.
\]
\end{lemma}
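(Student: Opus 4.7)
My plan is to establish both conclusions by a single direct calculation with the entries of $W$. Writing $W = \begin{pmatrix} a & b \\ c & d \end{pmatrix}$ with $a,b,c,d \in \N$, $ad - bc = 1$, and $\gcd(a,b,c,d) = 1$, I would first observe that $\det(LWR) = \det L \cdot \det W \cdot \det R = 1$, so the inverse of $LWR$ is given by the standard $2\times 2$ cofactor formula. Forming the triple product $LWR$ and then multiplying by $L$ on the left and $R$ on the right yields
\[
L(LWR)^{-1}R = \begin{pmatrix} a+b+c+d & c+d \\ b+d & d \end{pmatrix}.
\]

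To verify membership in $\Dj$, I would check the three defining properties in turn. Nonnegativity of the entries is immediate. The determinant equals $(a+b+c+d)d - (c+d)(b+d)$, which after expansion collapses to $ad - bc = 1$. For the gcd condition, any common divisor of the four entries must divide $d$ (the bottom-right entry), hence also $b$ (from $b+d$), then $c$ (from $c+d$), and finally $a$ (from the top-left entry); since $\gcd(a,b,c,d) = 1$, this forces the gcd of the new matrix to be $1$. Together these three checks give $L(LWR)^{-1}R \in \Dj$.

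For the second claim, I would exploit the isomorphism $\mu$ of \Cref{thm:jednoznacnost_v_D1}: a matrix $M \in \Dj$ corresponds to a word starting with $R$ precisely when $R^{-1}M$ again lies in $\Dj$. A short product gives
\[
R^{-1}\cdot L(LWR)^{-1}R = \begin{pmatrix} a+c & c \\ b+d & d \end{pmatrix},
\]
and the same three checks apply verbatim: the entries are nonnegative, the determinant is $(a+c)d - c(b+d) = ad - bc = 1$, and any common divisor of the entries divides $d$, hence $b$, then $c$, and then $a$, so the gcd is $1$. By the isomorphism, $R$ is therefore a prefix of the word corresponding to $L(LWR)^{-1}R$.

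The argument is almost entirely bookkeeping, so there is no serious obstacle; one could instead give an inductive proof based on the block decomposition of $W$ via the rewriting identities \eqref{eq:bubli_Li_vpravo}--\eqref{eq:bubliani_D1_1}, but the direct matrix computation is shorter and makes the structure of the resulting entries transparent.
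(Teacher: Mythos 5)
Your proposal is correct, and it takes a genuinely different route from the paper's. The paper decomposes $LWR$ explicitly into blocks $L^{i_0+1}R^{j_0+1}\cdots L^{i_k+1}R^{j_k+1}$, applies the rewriting identity \eqref{eq:bubliani_D1_1} to each block, and then collapses the intervening $LR^{-1}L^{-1}R = RL$ pieces to obtain an explicit word form
\[
L(LWR)^{-1}R = RL^{j_k}R^{i_k}\, RL\, L^{j_{k-1}}R^{i_{k-1}}\, RL \cdots L^{j_0}R^{i_0}L,
\]
from which both membership in $\Dj$ and the leading $R$ are immediate. You instead compute the matrix $L(LWR)^{-1}R = \bigl(\begin{smallmatrix} a+b+c+d & c+d \\ b+d & d \end{smallmatrix}\bigr)$ directly, check nonnegativity, determinant, and gcd, and then repeat for $R^{-1}L(LWR)^{-1}R$ to get the prefix claim via the isomorphism $\mu$. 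Both arguments are valid. Your version is shorter and the entries' structure is visible at a glance; the paper's version yields the full word decomposition, which is more information than the lemma actually states, but that extra information is not used elsewhere (the lemma is cited only for the two conclusions you proved). One small remark: the gcd check is in fact automatic here, since $\det = 1$ already forces $\gcd = 1$ for any integer matrix; your argument is of course still correct, just slightly more than necessary.
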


\begin{proof}
Let
\[
LWR = L^{i_0+1}R^{j_0+1}L^{i_1+1}R^{j_1+1} \cdots L^{i_k+1}R^{j_k+1}
\]
for some $k \geq 0$ and $i_s,j_s \geq 0$ for all $s \in \{0,\dots,k\}$.
We have
\[
L \left( LWR \right)^{-1} R =
\]
\[= L \left( L^{i_k+1}R^{j_k+1} \right)^{-1} R R^{-1} L^{-1} L \left( L^{i_{k-1}+1}R^{j_{k-1}+1} \right)^{-1} R R^{-1} \cdots L^{-1} L \left( L^{i_0+1}R^{j_0+1} \right)^{-1} R.
\]
Using $k$ times \eqref{eq:bubliani_D1_1} on the right side of the last equality, we obtain
\[
L \left( LWR \right)^{-1} R = R L^{j_k} R^{i_k} L R^{-1} L^{-1} R L^{j_{k-1}}R^{i_{k-1}} L R^{-1} \cdots L^{-1} R L^{j_0}R^{j_0} L.
\]
Since by \eqref{eq:bubli_Li_vpravo} we have $L R^{-1} L^{-1} R = RL$, we conclude
\[
L \left( LWR \right)^{-1} R = R L^{j_k} R^{i_k} RL L^{j_{k-1}}R^{i_{k-1}} RL \cdots L^{j_0}R^{j_0} L \in \Dj. \qedhere
\]
\end{proof}

\subsubsection{Transducers \texorpdfstring{$\T{}$}{Tn}}

\begin{theorem}[{\cite[Theorem 5.1]{Raney1973}}] \label{thm:Raney_edge}
Let $M \in \RB{}$.
For all $V \in \{L,R\}^*$ such that
\begin{itemize}
  \item $MV \not \in \RB{}$, and
  \item $MV_1 \in \RB{}$ for every proper prefix $V_1$ of $V$,
\end{itemize}
there exists a unique non-empty word $W \in \{L,R\}^*$ and $N \in \DB{}$ such that
\begin{equation} \label{eq:Raney_edge}
MV = WN.
\end{equation}
\end{theorem}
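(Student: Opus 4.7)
The approach is to factor $MV$ as $WN$ by iteratively extracting letters of $\{L, R\}$ from the left of $MV$ until landing in $\DB{}$. Before starting, I would observe that $MV \in \D{}$: its determinant equals $n \det V = n$ (since $\det L = \det R = 1$), and $\gcd(MV) = 1$ since any common divisor of its entries would divide $MV \cdot V^{-1} = M$ and hence $\gcd(M) = 1$.

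Write $V = V' x$ with $x \in \{L, R\}$ the last letter. The hypothesis $MV' \in \RB{}$ together with $MV = MV' \cdot x \notin \RB{}$ constrains the entries of $MV$: a direct case analysis on $x$ shows that $x$ itself is a left-factor of $MV$, i.e., $x^{-1}(MV)$ has non-negative integer entries. The crucial structural fact, verified by a short calculation, is that $M^{(1)} := x^{-1}(MV)$ always lies in $\CB{}$, using the strict inequalities $a' > c'$ and $d' > b'$ defining $MV' \in \RB{}$.

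I would then prove the following lemma: if $M \in \CB{} \setminus \DB{}$, then exactly one of $L, R$ is left-extractable from $M$, and the result of this extraction is again in $\CB{}$. Uniqueness of the extractable letter follows from the observation that having both $L^{-1}M$ and $R^{-1}M$ non-negative would force $a = c$ and $b = d$, contradicting $\det M = n \neq 0$. Existence uses that $\CB{} \setminus \DB{} = \CB{} \setminus \RB{}$, so exactly one of the two $\RB{}$-defining inequalities fails, and the failed one identifies the extractable letter. Iterating from $M^{(1)}$ yields a sequence in $\CB{}$ with strictly decreasing entry-sum, hence terminating in some $N \in \CB{} \cap \RB{} = \DB{}$. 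Setting $W = x \, y_1 \cdots y_{m-1}$ gives $MV = WN$ with $W$ non-empty (its first letter is $x$).

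Uniqueness of the pair $(W, N)$ is then immediate: the letter extracted at each step is forced by the uniqueness clause of the lemma, so $W$ is determined by $MV$, and $N = W^{-1}MV$ follows. The main obstacle in my plan is the preservation clause of the lemma, namely verifying that after each $L$- or $R$-extraction the resulting matrix is still in $\CB{}$; this requires a careful case distinction on which $\RB{}$-inequality fails at the current step, combined with the inequalities defining $\CB{}$, and amounts to checking that the invariant ``lie in $\CB{}$'' really is stable under the extraction rule used.
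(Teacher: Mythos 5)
Your proof is correct. The paper itself does not prove this statement: it is cited directly from Raney, and the only addendum in the text is the one-line remark that uniqueness of $W$ follows from $W = MVN^{-1}$ once $N$ is fixed. Your argument supplies a complete self-contained proof along what is essentially the standard (and, as far as I can tell, Raney's own) line: peel letters off the left of $MV$ until landing in $\DB{}$. The key technical points you flag are exactly right and all check out by the short computations you indicate --- the last letter $x$ of $V$ is left-extractable from $MV$ and yields a matrix in $\CB{}$ (using $MV'\in\RB{}$ and $MV\notin\RB{}$); from $\CB{}\setminus\DB{}=\CB{}\setminus\RB{}$ exactly one letter is extractable and extraction preserves $\CB{}$ (both reduce to $a+d>b+c$, which follows from $a>b$, $d>c$); the entry sum strictly decreases so the process terminates in $\DB{}$; and no letter is extractable from a $\DB{}$ matrix, which together with the fact that at most one letter is ever extractable from a matrix of nonzero determinant gives uniqueness of the pair $(W,N)$. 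One small point worth spelling out in a final write-up: in the uniqueness step you should explicitly note that if $MV=W_1N_1=W_2N_2$ with $W_1$ a proper prefix of $W_2$, then $N_1 = UN_2$ with $U$ nonempty, and $UN_2\notin\RB{}$ because left-multiplication of a nonnegative matrix by $L$ forces $a\le c$ and by $R$ forces $b\ge d$; this is what makes ``$W$ is determined by the forced extractions'' into a rigorous uniqueness proof rather than just a description of the algorithm.
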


% \begin{remark}
Theorem 5.1 in \cite{Raney1973} does not say that the word $W$ is unique, however this property follows directly from the equation $W = MVN^{-1}$.
% \end{remark}

% \begin{remark}
We have used some statements of \cite{Raney1973} about the sets $\D{}$ which are in \cite{Raney1973} defined without the condition that $\gcd(A) = 1$.
The validity of these statements for our definition of $\D{}$ follows from Corollary 8.4 in \cite{Raney1973}.
% \end{remark}

Based on the last theorem, we may now construct the transducer $\T{}$.

The definition of $\T{}$ is as follows:
\begin{enumerate}
    \item the set of states of $\T{}$ equals $\DB{}$;
    \item the set of transitions between states is given by \Cref{thm:Raney_edge}: there is a transition from $M$ to $N$ if $MV = WN$ for some $V,W \in \Dj$ with $MV \not \in \RB{}$ and $MV_1 \in \RB{}$ for every proper prefix $V_1$ of $V$. The input word of the transition is $V$, the output word is $W$.
\end{enumerate}

To ease our notation, the transition from $M$ to $N$ with input $V$ and output $W$ is denoted by
\[
\Tedge{M}{V}{W}{N}.
\]

Let $M$ and $N$ be two states of $\T{}$ such that there is a sequence of transitions starting at $M$ and ending at $N$ in $\T{}$ with concatenation of respective input words $V$ and output words $W$. We write
\[
\Twalk{M}{V}{W}{N}.
\]
We call this sequence a \emph{walk}.
Concatenating the matrix relations of all transition in the walk we obtain the relation $MV = WN$.
To ease the notation we also allow $V$ to be the empty word, which implies that $W$ is also empty and $M = N$.
If we do not need to know the concrete input or output word, we write $\bullet$ on its position.

Given a walk $\Twalk{M}{V}{W}{N}$, we shall write for instance
\[
\Twalk{M}{V}{W}{N} = \Twalk{M}{V_1}{W_1}{\Tedge{M_1}{V_2}{W_2}{\Tedge{M_2}{V_3}{W_3}{N}}}
\]
to specify some decomposition of the walk. If a walk repeats, we shall also write for instance
\[
\Twalk{M}{V}{W}{\Twalk{M}{V}{W}{M}} = \left( \Twalk{M}{V}{W}{M} \right)^2.
\]

In what follows, let $A_n = \mat{n}{0}{0}{1}, \assoc{A_n} = \mat{1}{0}{0}{n}$.
We have $A_n, \assoc{A_n} \in \DB{}$ for all $n$ and for $n = 2$, we have $\mathcal{DB}_2 = \left\{ A, \assoc{A} \right\}$.
See \Cref{fig:RaneyT_2} which depicts $\T[2]{}$ and \Cref{fig:table_T_3} showing the transition labels of $\T[3]$.

\begin{figure}[!htb]
  \centering
\begin{tikzpicture}[auto, initial text=, >=latex]
\node[state, accepting] (v0) at (3.000000, 0.000000) {$A_2$};
\node[state, accepting] (v1) at (-3.000000, 0.000000) {$\assoc{A_2}$};
\path[->] (v0) edge[loop above] node {$R|R^2$} ();
\path[->] (v0) edge[loop below] node {$L^2|L$} ();
\path[->] (v0.185.00) edge node[rotate=360.00, anchor=north] {$LR|RL$} (v1.355.00);
\path[->] (v1.5.00) edge node[rotate=0.00, anchor=south] {$RL|LR$} (v0.175.00);
\path[->] (v1) edge[loop below] node {$L|L^2$} ();
\path[->] (v1) edge[loop above] node {$R^2|R$} ();
\end{tikzpicture}
  \caption{Transducer $\T[2]{}$.}
  \label{fig:RaneyT_2}
\end{figure}

\begin{table}[!htb]
  \centering
$\begin{matrix} & A_3 = \begin{pmatrix} 3 & 0 \\
0 & 1 \\
\end{pmatrix}  & B = \begin{pmatrix} 2 & 1 \\
1 & 2 \\
\end{pmatrix}  & \assoc{A_3} = \begin{pmatrix} 1 & 0 \\
0 & 3 \\
\end{pmatrix}  & \\
\begin{pmatrix}3&0\\
0&1\\
\end{pmatrix} & R|R^{3} ,  L^{3}|L & LR|R & L^{2}R|RL^{2}\\
\begin{pmatrix}2&1\\
1&2\\
\end{pmatrix} & L|LR &  & R|RL\\
\begin{pmatrix}1&0\\
0&3\\
\end{pmatrix} & R^{2}L|LR^{2} & RL|L & L|L^{3} , R^{3}|R\\
\end{matrix}$
  \caption{Transitions in the transducer $\T[3]{}$.}
  \label{fig:table_T_3}
\end{table}

\subsubsection{Symmetries of the transducer \texorpdfstring{$\T{}$}{Tn}}

The transducer $\T{}$ possesses some symmetries that we shall use later.

Let $M = \begin{pmatrix}
a & b \\ c & d
\end{pmatrix}$.
The matrix $\assoc{M}$ \emph{associated to} $M$ is given by
\[
\assoc{M} = \begin{pmatrix}
d & c \\ b & a
\end{pmatrix} =
\begin{pmatrix}
0 & 1 \\
1 & 0
\end{pmatrix}
M
\begin{pmatrix}
0 & 1 \\
1 & 0
\end{pmatrix}
.
\]
Clearly, for all matrices $M$ and $N$, we have $\assoc{MN} = \assoc{M}\assoc{N}$.

Note that since we identified the letters $L$ and $R$ with matrices and due to \Cref{thm:jednoznacnost_v_D1}, this operation is also defined for any word over $\{L,R\}$.
The following simple identities are given in \cite[Theorem 7.1]{Raney1973} for $M \in \Dj$, i.e., words over $\{L,R\}$:
\begin{align*}
M & = L^{i_0}R^{i_1}L^{i_2} \cdots L^{i_\ell}, \\
\assoc{M} & = R^{i_0}L^{i_1}R^{i_2} \cdots R^{i_\ell}, \\
(\assoc{M})^T = \assoc{M^T} & = L^{i_\ell}R^{i_{\ell-1}} \cdots R^{i_1}L^{i_0}, \\
M^T & = R^{i_\ell}L^{i_{\ell-1}} \cdots L^{i_1}R^{i_0},
\end{align*}
where $i_0,i_1,\ldots,i_\ell$ are nonnegative integers.
These properties imply a symmetry of $\T{}$ in the sense of the following claim.

\begin{proposition} \label{prop:assoc_sym}
If the transition $\Tedge{M}{V}{W}{N}$ exists, then the transitions $\Tedge{\assoc{M}}{\assoc{V}}{\assoc{W}}{\assoc{N}}$ and $\Tedge{N^T}{W^T}{V^T}{N^T}$ exist.
\end{proposition}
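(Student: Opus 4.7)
The plan is to treat the two claimed edges in parallel: apply the relevant operator ($\assoc{\cdot}$ for the first, $(\cdot)^T$ for the second) to the edge relation $MV = WN$ and verify that the four defining conditions of an edge of $\T{}$ from the list following \Cref{thm:Raney_edge} carry over: the matrix identity, the endpoints lying in $\DB{}$, the fact that the input and output are nonempty words over $\{L,R\}$, and the $\RB{}$-condition on the partial products. The bookkeeping is provided by a direct inspection of the defining inequalities of $\RB{}$, $\CB{}$, $\DB{}$: the map $\assoc{\cdot}$ preserves each of these three sets, while transposition swaps $\RB{}$ with $\CB{}$ and therefore preserves $\DB{} = \RB{} \cap \CB{}$.

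For the edge $\Tedge{\assoc{M}}{\assoc{V}}{\assoc{W}}{\assoc{N}}$, multiplicativity of $\assoc{\cdot}$ gives $\assoc{M}\assoc{V} = \assoc{W}\assoc{N}$ directly from $MV = WN$. Because $\assoc{\cdot}$ swaps the letters $L$ and $R$ without reversing order, $\assoc{V}$ and $\assoc{W}$ are again nonempty words over $\{L,R\}$, and its proper prefixes are exactly $\assoc{V_1}$ for proper prefixes $V_1$ of $V$. Since $\assoc{\cdot}$ preserves $\RB{}$, the conditions $MV \notin \RB{}$ and $MV_1 \in \RB{}$ translate verbatim into the corresponding conditions for $\assoc{M}\assoc{V}$ and $\assoc{M}\assoc{V_1}$, and all four edge conditions are met.

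For the second edge, which I read as $\Tedge{N^T}{W^T}{V^T}{M^T}$ (taking the repeated $N^T$ to be a typo), transposing $MV = WN$ gives $N^T W^T = (WN)^T = (MV)^T = V^T M^T$. The endpoints $N^T$ and $M^T$ lie in $\DB{}$, and, because transposition reverses products and swaps the matrices $L$ and $R$, both $W^T$ and $V^T$ are nonempty words over $\{L,R\}$, with proper prefixes of $W^T$ corresponding exactly to transposes of proper suffixes of $W$. The subtle point, which I expect to be the main obstacle, is the $\RB{}$-condition: naively transposing the original $\RB{}$-statements on the partial products $MV_1$ yields $\CB{}$-statements on their transposes, which is the wrong set.

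To bridge this gap I would extract two additional facts from Raney's construction of the edges of $\T{}$: (i) along any edge $\Tedge{M}{V}{W}{N}$ one has not only $MV \notin \RB{}$ but also $MV \notin \CB{}$, whose transpose is precisely $N^T W^T \notin \RB{}$; and (ii) an output-side dual of \Cref{thm:Raney_edge}, namely that for every proper suffix $W_j$ of $W$ the product $W_j N$ lies in $\CB{}$, which after transposition becomes $N^T W^T_1 \in \RB{}$ for every proper prefix $W^T_1$ of $W^T$. Fact (ii) is the natural counterpart of the minimality of $V$ when the edge is traced from its ending state, and both (i) and (ii) follow by examining how Raney's algorithm constructs $W$ and $N$ from $M$ and $V$. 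Once (i) and (ii) are established the verification is immediate, so the real work is in clean bookkeeping around the duality between the input and output sides of $\T{}$.
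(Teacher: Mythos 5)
Your strategy is sound and in fact supplies more detail than the paper, which offers no proof at all for this proposition and simply cites Raney's Theorem~7.1 as implying it. The first half (the $\assoc{\cdot}$ edge) is fully correct: $\assoc{\cdot}$ is multiplicative, fixes each of $\RB{}$, $\CB{}$, $\DB{}$ (as a direct check of the defining inequalities shows), and swaps $L\leftrightarrow R$ without reversing order, so all four edge conditions carry over verbatim. You also correctly read $\Tedge{N^T}{W^T}{V^T}{N^T}$ as a misprint for $\Tedge{N^T}{W^T}{V^T}{M^T}$; the paper itself uses the corrected form later, e.g.\ in the proof of \Cref{le:vstup_V_2}. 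And you correctly locate the real content: transposition swaps $\RB{}$ with $\CB{}$, so the $\RB{}$ conditions on the $MV_1$ do not transfer and one needs the output-side facts you label (i) and (ii).

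The only weakness is that you assert (i) and (ii) rather than prove them, and (ii) in particular is not quite ``immediate from Raney''. For completeness: (i) is elementary, since for any $X\in\D{}$ one has $(XL)_{21}\geq (XL)_{22}$ would require a negative entry, and concretely $XL\in\CB{}$ forces $X_{21}<0$ and $XR\in\CB{}$ forces $X_{12}<0$; hence $MV=(MV')Q\notin\CB{}$ for $V$ nonempty, which transposes to $N^TW^T\notin\RB{}$. For (ii), equivalently one must show $W_p^{-1}MV\in\CB{}$ for every nonempty prefix $W_p$ of $W$. The base case $W_p=Q$ (the first letter of $W$, equal by \Cref{le:stejna_pismena} to the last letter of $V$) follows from $MV'\in\RB{}$ and $MV'Q\notin\RB{}$ by direct computation of $Q^{-1}MV'Q$. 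The inductive step rests on the observation that if $X\in\CB{}$ and $Q'^{-1}X\in\D{}$ for $Q'\in\{L,R\}$, then $Q'^{-1}X\in\CB{}$, because $X_{11}>X_{12}$ and $X_{22}>X_{21}$ together give $X_{11}+X_{22}>X_{12}+X_{21}$, which is exactly the possibly-endangered $\CB{}$-inequality of $Q'^{-1}X$. With this filled in, the argument is complete and, given that the paper gives no proof of its own, this would be a genuine improvement rather than a divergence.
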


\subsubsection{Relation between repetends and runs}

The following lemma, exhibiting the relation between the number of runs in the repetend of the LR representation of $x$ and the period of its continued fraction, is a direct corollary of the definition of LR representation.

\begin{lemma} \label{le:vypocet_per}
Let $V$ be such a repetend of $x$ whose first and last letter are different.
We have
\[
 \per(x) = \begin{cases}
 \frac{\sigma(V)}{2} & \text{ if $V = V_1\assoc{V_1}$ for some $V_1 \in \Dj$,} \\
 \sigma(V) & \text{ otherwise.}
 \end{cases}
\]
\end{lemma}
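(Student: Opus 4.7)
The plan is to exploit the one-to-one correspondence between runs in the LR representation of $x$ and partial quotients of its continued fraction expansion: the $i$-th run has length equal to the $i$-th partial quotient of the periodic tail, and consecutive runs alternate between $L$ and $R$. The key initial observation is that, because the first and last letters of $V$ differ, no run-merging occurs at the junctions when we form $V^\omega$; hence the run-length sequence of the periodic LR tail of $x$ is exactly $(r_1, \ldots, r_{\sigma(V)})$ repeated, where $r_i$ denotes the length of the $i$-th run of $V$. In particular $\per(x) \mid \sigma(V)$.

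Next I would split on the parity of $p = \per(x)$. If $p$ is even, then after $p$ consecutive runs the alternating $L/R$ pattern returns to the same letter, so the minimal LR period itself already consists of $p$ runs; this gives $\sigma(V) = p$, with first and last letters of $V$ automatically distinct. If $p$ is odd, the $L/R$ pattern is flipped after a single CF period, so two CF periods are required to close up: the minimal LR period then has $2p$ runs, and writing $V_1$ for the first $p$ of them, the second block carries the same run lengths but opposite letters. Hence $V = V_1 \assoc{V_1}$ with $V_1 \in \Dj$ and $\sigma(V) = 2p$.

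The last step, which I expect to be the only delicate point, is to establish the dichotomy by ruling out a factorisation $V = V_1 \assoc{V_1}$ when $p$ is even. In such a factorisation the run-length sequences of $V_1$ and $\assoc{V_1}$ would coincide, because $\assoc{\cdot}$ only interchanges $L$ and $R$ and leaves run lengths unchanged; this would force the partial-quotient sequence to have period at most $\sigma(V_1) = p/2 < p$, contradicting the minimality of $p$. Combining the two cases, the ``otherwise'' branch yields $\sigma(V) = p = \per(x)$ and the ``$V = V_1\assoc{V_1}$'' branch yields $\sigma(V) = 2p = 2\per(x)$, exactly matching the stated formula.
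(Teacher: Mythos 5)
Your proof is correct. The paper itself gives no proof of this lemma, labelling it ``a direct corollary of the definition of LR representation,'' so there is no official argument to compare against; your write-up is exactly the natural fleshing-out of that assertion. You correctly identify that the hypothesis on the first and last letters of $V$ prevents run-merging across the period boundary, which gives that the run-length sequence of the tail is $\sigma(V)$-periodic and hence $\per(x) \mid \sigma(V)$, and the parity split on $p=\per(x)$ together with the fact that $\sigma(V)$ is necessarily even (runs alternate and the endpoints differ) yields $\sigma(V)=p$ or $\sigma(V)=2p$ in the two cases. A couple of points you compress but which do hold: (i) the upper bound $\sigma(V)\leq p$ in the even case deserves one more line — with $p\mid\sigma(V)$ and the run lengths $(w_1,\dots,w_p)$-periodic, $|V|=\tfrac{\sigma(V)}{p}\sum_i w_i$, and minimality of $|V|$ forces $\sigma(V)/p=1$; and (ii) in ruling out $V=V_1\assoc{V_1}$ for $p$ even, you implicitly use that the factorisation must break at a run boundary, which follows because the last letter of $V_1$ and the first letter of $\assoc{V_1}$ are necessarily different. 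Both gaps are minor and easily filled as indicated, so the argument is sound.
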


To work with the matrix representation of words, we shall need the connection between the number of runs in some word over $\{L,R\}$ and its matrix representation.

\begin{lemma} \label{le:pocet_LRzmen}
Let $W = LW'R, W = \mat{a}{b}{c}{d}$ and $W,W' \in \Dj$. We have
\[
\sigma(W) = 2 \left\lfloor \frac{\xi(a,c)}{2} \right\rfloor + 2.
\]
\end{lemma}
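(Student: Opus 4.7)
The plan is to relate the run decomposition of $W$ to the continued fraction expansion of $c/a$. Since $W = LW'R$ with $W, W' \in \Dj$, the matrix $W$ starts with $L$ and ends with $R$, so it admits a factorisation
\[
W = L^{\alpha_1}R^{\alpha_2}L^{\alpha_3}R^{\alpha_4}\cdots L^{\alpha_{2k+1}}R^{\alpha_{2k+2}}
\]
with each $\alpha_i \ge 1$ and some $k \ge 0$, from which $\sigma(W) = 2(k+1)$ immediately. It therefore suffices to show $\xi(a,c) \in \{2k, 2k+1\}$, because then $2\lfloor\xi(a,c)/2\rfloor + 2 = 2k+2$.

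The key intermediate step I would establish is the continued fraction identity
\[
c/a = [\alpha_1; \alpha_2, \alpha_3, \ldots, \alpha_{2k+1}].
\]
I would prove it by induction on $k$, computing $W\binom{1}{0}$ from right to left using $L^\beta\binom{x}{y} = \binom{x}{\beta x+y}$ and $R^\beta\binom{x}{y} = \binom{x+\beta y}{y}$, and noting that $R^{\alpha_{2k+2}}\binom{1}{0} = \binom{1}{0}$ makes the trailing $R$-run inert. In the induction step, peel $L^{\alpha_1}R^{\alpha_2}$ off the left to expose the shorter word $W_1 = L^{\alpha_3}\cdots R^{\alpha_{2k+2}}$ of the same form with first column $(a_1, c_1)$; a direct matrix computation gives $(a, c) = (a_1 + \alpha_2 c_1,\ \alpha_1(a_1+\alpha_2 c_1) + c_1)$, so $c/a = \alpha_1 + 1/\bigl(\alpha_2 + 1/(c_1/a_1)\bigr)$, and the inductive hypothesis closes the step.

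Next, since $\det W = 1$ forces $\gcd(a,c) = 1$, I would invoke the standard correspondence between $\xi$ and the canonical continued fraction: for coprime positive integers with $c \ge a$, $\xi(a,c)$ equals the length of the canonical continued fraction expansion of $c/a$. The expansion $[\alpha_1; \alpha_2, \ldots, \alpha_{2k+1}]$ has length $2k+1$ with partial quotients $\ge 1$, so it is already canonical when $k = 0$ or $\alpha_{2k+1} \ge 2$, yielding $\xi(a,c) = 2k+1$. When $k \ge 1$ and $\alpha_{2k+1} = 1$, the identity $[\,\ldots, \alpha_{2k}, 1] = [\,\ldots, \alpha_{2k}+1]$ collapses it to a canonical CF of length $2k$ (with last quotient $\alpha_{2k}+1 \ge 2$), yielding $\xi(a,c) = 2k$. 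In both cases $\lfloor\xi(a,c)/2\rfloor = k$, and the claimed formula follows.

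The main technical obstacle is the clean inductive verification of the continued fraction identity, since one has to track how peeling a pair of runs shifts the expansion; once that is set up, the parity dichotomy on whether $\alpha_{2k+1}$ equals $1$ or not finishes the argument uniformly.
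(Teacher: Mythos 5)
Your proposal is correct, and it takes a genuinely different route from the paper's. The paper does a direct induction on $\sigma(W)$: it prepends a pair of runs $L^gR^h$ to a shorter word $W$, computes the new first column explicitly, and checks that $\xi$ of that column increases by exactly $2$ (with a separate sub-case for the degenerate situation $c=a=1$, where the Euclidean recursion bottoms out differently); the floor then absorbs the $\pm 1$ parity ambiguity automatically at each step. You instead make the rational number $c/a$ the protagonist: you prove the identity $c/a = [\alpha_1;\alpha_2,\ldots,\alpha_{2k+1}]$ by peeling an $L^{\alpha_1}R^{\alpha_2}$ block off the left, and then invoke the classical correspondence between $\xi(a,c)$ and the length of the canonical continued fraction of $c/a$ (using $\gcd(a,c)=1$, which follows from $\det W = 1$, and $c\ge a$, which follows from $W$ beginning with $L$), handling the non-canonical case $\alpha_{2k+1}=1$, $k\ge 1$ via the collapse $[\ldots,\alpha_{2k},1]=[\ldots,\alpha_{2k}+1]$. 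The underlying matrix computation is the same in both (the action of $L^\alpha R^\beta$ on the first column), but your version is conceptually more transparent because it names the continued fraction of $c/a$ explicitly and lets the $\xi$--CF-length dictionary do the parity bookkeeping; the paper's version is more self-contained because it never appeals to that external correspondence, only to the Euclidean recursion itself. Both are valid.
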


\begin{proof}
Let $f,e$ be positive integers such that $W = W_1L^eR^f$ for some $W \in \Dj$.

We shall proceed by induction on $\sigma(W)$.

For $\sigma(W) = 2$, we have $W = L^eR^f = \mat{1}{f}{e}{ef+1}$ and since $\xi(1,e)=1$ the claim holds.

Assume now the claim holds for $\sigma(W) = k$.
Note that for $W = \mat{a}{b}{c}{d}$, as $W$ starts with $L$ and ends with $R$, we have $c > a \geq 1$ or $c=a=1$.
Let $g,h$ be positive integers.
We have $\sigma(L^gR^hW) = k+2$ and  we have
\[
% L^gR^hW = \mat{c h + a}{d h + b}{\left(g h + 1\right) c + a g}{\left(g h + 1\right) d + b g}.
L^gR^hW = \mat{c h + a}{d h + b}{ g(ch + a)  + c}{g (dh + b)  + d}.
\]
If $c > a$, then $\xi(c h + a, g(ch + a)  + c) = \xi(a,c)+2$ and the claim follows.
If $c=a=1$, then in fact $W = LR^f$, $\xi(h + 1, g(h + 1)  + 1) = 2$ and $\sigma(L^gR^hLR^f) = 4 = 2 \left\lfloor \frac{\xi(h + 1, g(h + 1)  + 1)}{2} \right\rfloor + 2$.
\end{proof}

\begin{example}
The word $W = L^2RLR^{3}$ is of the form from the above lemma and we have $W = \mat{2}{7}{5}{18}$.
Therefore, $ 2 \left\lfloor \frac{\xi(a,c)}{2} \right\rfloor + 2 =2 \left\lfloor \frac{\xi(2,5)}{2} \right\rfloor + 2 = 2\left\lfloor \frac{2}{2} \right\rfloor + 2 = 4 = \sigma(W)$.
\end{example}

\subsection{Properties of the transducer \texorpdfstring{$\T{}$}{Tn}}

In what follows, we suppose that $n$ is a fixed integer.

\subsubsection{Input and output words of the edges in the transducer \texorpdfstring{$\T{}$}{Tn}}

First, we investigate the input words on outgoing edges from some state of the transducer $\T{}$.

\begin{lemma} \label{le:vstupni_slova}
If $X \in \RB{}$,
then $XL \not \in \RB{}$ or $XR \not \in \RB{}$.
\end{lemma}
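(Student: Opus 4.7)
The plan is a short direct computation by contradiction. Write $X = \begin{pmatrix} a & b \\ c & d \end{pmatrix}$, so by assumption $a > c$ and $d > b$. Compute explicitly
\[
XL = \begin{pmatrix} a+b & b \\ c+d & d \end{pmatrix}, \qquad XR = \begin{pmatrix} a & a+b \\ c & c+d \end{pmatrix}.
\]

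Now suppose toward contradiction that both $XL \in \RB{}$ and $XR \in \RB{}$. Reading the defining inequalities of $\RB{}$ (top-left greater than bottom-left, bottom-right greater than top-right) applied to each product gives: from $XL \in \RB{}$ the inequality $a+b > c+d$, and from $XR \in \RB{}$ the inequality $c+d > a+b$. These two strict inequalities are incompatible, so at least one of $XL$, $XR$ must fail to lie in $\RB{}$.

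There is no real obstacle here; the statement is just the observation that the two "row-balance" conditions on $XL$ and $XR$ are the negations of each other modulo the hypotheses already provided by $X \in \RB{}$. In particular one does not even need to verify that $XL$ and $XR$ lie in $\D{}$ (i.e.\ gcd and determinant conditions), because membership in $\RB{} \subseteq \D{}$ entails the inequalities, which is all that is used. The only mildly subtle point worth noting in the write-up is that the other half of each defining inequality of $\RB{}$ (namely $d > b$ for $XL$ and $a > c$ for $XR$) is inherited directly from $X \in \RB{}$, so the whole obstruction to simultaneous membership is concentrated in the single pair of contradictory strict inequalities above.
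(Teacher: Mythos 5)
Your proof is correct and follows essentially the same approach as the paper: both compute $XL$ and $XR$ explicitly and observe that the row-balance inequalities they need ($a+b > c+d$ for $XL$, $c+d > a+b$ for $XR$) cannot hold simultaneously. The paper phrases it as two cases ($a+b\ge c+d$ or $a+b\le c+d$) rather than a contradiction, but this is the same observation.
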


\begin{proof}
Let $X = \mat{a}{b}{c}{d} \in \RB{}$.
Therefore, $a>c$, $d>b$ and $XL =\mat{a+b}{b}{c+d}{d}, XR = \mat{a}{a+b}{c}{c+d}$.
We have $a+b \geq c+d$ and then $XR \not \in \RB{}$, or $a+b \leq c+d$ and then $XL \not \in \RB{}$.
\end{proof}

A simple consequence of \Cref{thm:Raney_edge} is that the set of all input word of an outgoing edge of a state of $\T{}$ is a prefix code (no element is a prefix of another).
The last lemma implies that the lengths of these input words are $1,2,\ldots,\ell-1,\ell,\ell$ for some $\ell$.

\begin{example}
All transitions in the transducer $\mathcal{T}_{14}$ are given in \Cref{tab:T_14}.
\end{example}

\begin{sidewaystable}
  \centering
  \includegraphics[width=\textwidth]{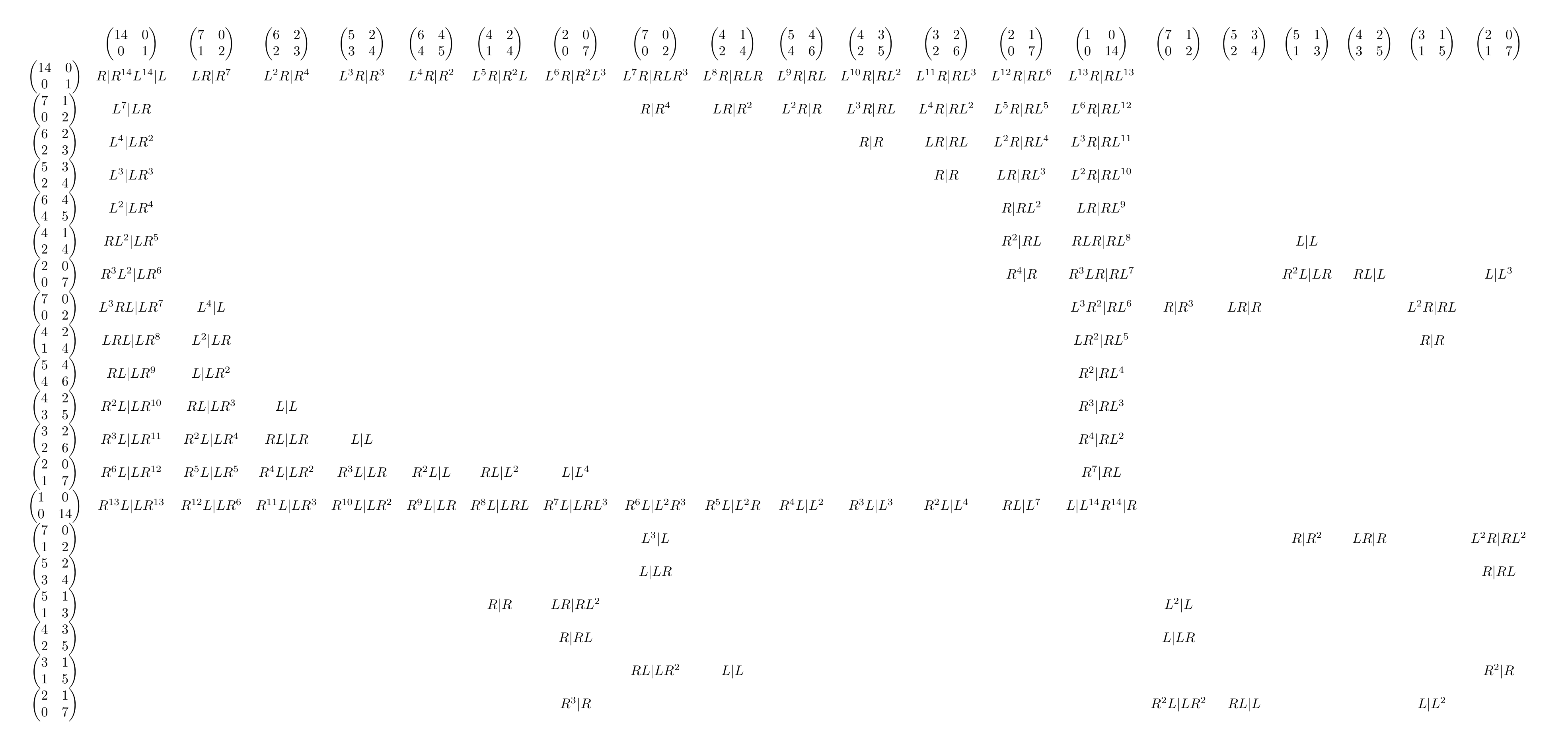}
  \caption[angle=90]{Table of transitions in the transducer $\mathcal{T}_{14}$.}
\label{tab:T_14}
\end{sidewaystable}

The following lemma shows that there is no edge with input label having suffix $R^{n+1}$ or $L^{n+1}$

\begin{lemma} \label{le:max_n_jednoho_pismene}
Let $Q \in \{L, R\}$.
Let $X \in \RB{}$ and let $i$ be the minimal positive integer such that $XQ^i \not \in \RB{}$.
We have $i \leq n$.
\end{lemma}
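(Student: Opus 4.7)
The plan is to use the associated-matrix symmetry to reduce to the case $Q=R$, and then to run a direct computation on the entries of $X$. If $X = \mat{a}{b}{c}{d} \in \RB{}$, then the associated matrix $\assoc{X} = \mat{d}{c}{b}{a}$ also lies in $\RB{}$, since the defining inequalities $d>b$ and $a>c$ are exactly those of $X$. Using $\assoc{L}=R$, $\assoc{R}=L$ and the multiplicativity of $\assoc{\,\cdot\,}$, one has $\assoc{XL^i} = \assoc{X}R^i$, so the minimal $i$ with $XL^i \not\in \RB{}$ equals the minimal $i$ with $\assoc{X}R^i \not\in \RB{}$. Thus from now on only the case $Q=R$ needs to be treated.

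For $Q=R$, I would compute
\[
XR^i = \mat{a}{ia+b}{c}{ic+d}.
\]
A short check shows that $\gcd(XR^i)=\gcd(X)=1$: any common divisor of the four entries divides the first column, and consequently also $b = (ia+b) - i\cdot a$ and $d = (ic+d) - i\cdot c$. Hence leaving $\RB{}$ can only come from failure of a row-balanced inequality; the inequality $a>c$ persists, and $ic+d > ia+b$ is equivalent to $i(a-c) < d-b$. Since $a>c$ in $\N$ forces $a-c\geq 1$, the smallest positive $i$ with $XR^i\not\in \RB{}$ is $i_0 = \lceil (d-b)/(a-c) \rceil$, and in particular $i_0 \leq d-b$.

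The heart of the argument is then the elementary inequality $d-b \leq n$, where $n = ad - bc$. I would write
\[
n - (d-b) = ad - bc - d + b = d(a-1) - b(c-1)
\]
and argue nonnegativity by a short case split: if $c=0$, the right-hand side equals $d(a-1)+b \geq 0$ since $a\geq 1$; if $c\geq 1$, then $a-1 \geq c$ (as $a>c$ in $\N$) and $d\geq b+1$, so $d(a-1) \geq dc \geq bc \geq b(c-1)$. Combining this with $i_0 \leq d-b$ yields $i_0 \leq n$, which finishes the proof. I do not foresee a serious obstacle; the only points requiring care are (i) the $\gcd$ verification that pins the obstruction to membership in $\RB{}$ on the row inequality alone, and (ii) the $c=0$ boundary in the arithmetic step.
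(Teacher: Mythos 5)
Your proof is correct and takes essentially the same approach as the paper: a direct computation with the entries of $XQ^i$, using the determinant relation $n = ad-bc$ to bound the critical quantity by $n$. The paper handles $Q=L$ directly and argues that $XQ^{i-1}\in\RB{}$ forces $i-1 < \frac{a-c}{d-b} = \frac{n}{d(d-b)}-\frac{c}{d} \leq n$, so the extra bookkeeping you do (reducing $L$ to $R$ via the associated matrix, verifying $\gcd$ preservation, and exhibiting the exact $i_0 = \lceil (d-b)/(a-c)\rceil$) is unnecessary though correct, and your case-split factorization $n-(d-b)=d(a-1)-b(c-1)\ge 0$ plays the same role as the paper's substitution $a=\frac{n+bc}{d}$.
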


\begin{proof}
Let $Q = L$ and let $X = \begin{pmatrix}
a & b \\ c & d
\end{pmatrix} \in \RB{}$.
We have
\[
XL^i = \begin{pmatrix}
a + bi & b \\ c + di & d
\end{pmatrix}.
\]
Assume $i$ is minimal such that $XL^i \not \in \RB{}$. Therefore, $XL^{i-1} \in \RB{}$.
Since $d > b$, we have $a + b(i-1) > c + d(i-1)$, which implies
\[
i -1 < \frac{a - c}{d - b}.
\]
Using $a = \frac{n+bc}{d}$, we obtain
\[
\frac{a - c}{d - b} = \frac{n}{d(d-b)} - \frac{c}{d} \leq n.
\]

The proof for $Q = R$ is analogous.
\end{proof}

The next lemma shows a simple link of input and output words.

%3.5.3
\begin{lemma}[{\cite[Theorem 5.1]{Raney1973}}] \label{le:stejna_pismena}
Let $\Tedge{M}{VQ_1}{Q_2W}{N}$ with $Q_1,Q_2 \in \{L,R\}$.
We have $Q_1 = Q_2$.
\end{lemma}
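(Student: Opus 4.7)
The plan is to do a direct two-case analysis on $Q_1 \in \{L, R\}$ using elementary matrix arithmetic. Set $X = MV$. Because $V$ is a proper prefix of the edge's input word $VQ_1$ and $M \in \DB{} \subseteq \RB{}$, the defining condition of transitions from \Cref{thm:Raney_edge} gives $X \in \RB{}$; writing $X = \begin{pmatrix} a & b \\ c & d \end{pmatrix}$, this means $a > c$ and $d > b$. The same property gives $XQ_1 \notin \RB{}$. The decomposition $XQ_1 = Q_2 W N$ with $W \in \Dj$ and $N \in \DB{}$ implies $Q_2^{-1} X Q_1 = W N \in \N^{2,2}$, so the key observation is that a letter $Q$ can appear as the first letter of the output only if $Q^{-1} X Q_1$ has all nonnegative entries.

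In the case $Q_1 = L$, direct multiplication gives $XL = \begin{pmatrix} a+b & b \\ c+d & d \end{pmatrix}$. Since $d > b$ is still valid, the failure $XL \notin \RB{}$ forces $a + b \leq c + d$. Now suppose toward contradiction that $Q_2 = R$: then $R^{-1} X L$ must lie in $\N^{2,2}$, but a direct computation shows its $(1,2)$-entry equals $b - d < 0$. Hence $Q_2 \neq R$, and $Q_2 = L = Q_1$.

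The case $Q_1 = R$ is entirely symmetric. Here $XR = \begin{pmatrix} a & a+b \\ c & c+d \end{pmatrix}$, the strict inequality $a > c$ persists, and $XR \notin \RB{}$ forces $c + d \leq a + b$. If $Q_2 = L$, then $L^{-1} X R$ would have to be in $\N^{2,2}$, but its $(2,1)$-entry equals $c - a < 0$, a contradiction. Hence $Q_2 = R = Q_1$, completing the analysis.

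I do not expect a serious obstacle: the whole argument reduces to two sign checks on $2 \times 2$ integer matrices and exploits the strict row-balance inequalities. The only mildly subtle point is the link between ``$Q_2$ is the first letter of the output'' and ``$Q_2^{-1} X Q_1 \in \N^{2,2}$'', which follows at once from the fact that the remaining product $W N$ has entries in $\N$. This makes the proof short but crucial for all later structural arguments about $\T{}$.
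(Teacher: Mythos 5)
The paper does not prove this lemma itself; it cites it from Raney's Theorem~5.1, so there is no in-paper argument to compare against. Your self-contained proof is correct. You correctly extract from \Cref{thm:Raney_edge} that $X = MV \in \RB{}$ (since $V$ is a proper prefix of the edge's input and $M \in \DB{} \subseteq \RB{}$), and the key step — that $Q_2^{-1} X Q_1 = WN$ must have nonnegative entries because $W \in \Dj$ (or $W$ is the identity) and $N \in \DB{} \subseteq \N^{2,2}$ — is exactly right. The two sign checks then close the argument: if $Q_1 = L$ and $Q_2 = R$ the $(1,2)$-entry of $R^{-1}XL$ equals $b - d < 0$ since $X \in \RB{}$ forces $d > b$; if $Q_1 = R$ and $Q_2 = L$ the $(2,1)$-entry of $L^{-1}XR$ equals $c - a < 0$ since $a > c$. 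One small remark: the intermediate deductions that $XL \not\in \RB{}$ forces $a + b \leq c + d$ (and symmetrically $c + d \leq a + b$) are never used; your contradictions come purely from $X \in \RB{}$, so the proof is even a bit leaner than you presented it.
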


\subsubsection{Sets \texorpdfstring{$\LS{}$}{LS} and \texorpdfstring{$\RS{}$}{RS}}
\label{sec:LS_n}

In what follows, we show that an important role is played by input words with long runs of the same letter.
We now introduce two sets of matrices that are always visited when reading such input words.

\begin{definition} \label{def:LS_a_RS}
Let $\LS{}$ respectively $\RS{}$ denote the subset of $\DB{}$ such that $X \in \LS{}$ respectively $X \in \RS{}$ if there exists a walk $\Twalk{X}{L^i}{L^j}{X}$ respectively $\Twalk{X}{R^i}{R^j}{X}$ for some $i,j > 0$.
\end{definition}

% \begin{remark}
The names of the sets $\LS{}$ and $\RS{}$ are abbreviations for ``$L$-special'' and ``$R$-special''.
% \end{remark}
For example, we have $A_n \in \LS{}$ since $\Tedge{A_n}{L^n}{L}{A_n}$ and $A_n \in \RS{}$ since $\Tedge{A_n}{R}{R^n}{A_n}$.

The above definition says that the matrices in set $\LS{}$ ($\RS{}$) can be visited several times while reading the same run of letters.
Moreover, the following lemma shows that these matrices are the only ones with such a property.

\begin{lemma} \label{le:char_toceni_na_L}
Let $M = \mat{a}{b}{c}{d} \in \DB{}$ and $i > 0$.
We have
\[
\Twalk{M}{L^i}{W}{M}
\]
for some $W \in \Dj$ if and only if $M \in \LS{}$ (i.e., $W = L^j$).

Moreover, the minimal value of such integer $i$ equals $\min \left\{ k > 0 \colon \frac{kd}{a} \in \N \right\}$ and is less than or equal to $n$.
\end{lemma}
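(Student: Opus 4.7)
The plan is to handle the $\Leftarrow$ direction of the equivalence directly from the definition of $\LS{}$, and to treat the $\Rightarrow$ direction together with the explicit formula for the minimal $i$ by chasing the matrix identity $ML^i = WM$. Concatenating the edge equations along any walk $\Twalk{M}{L^i}{W}{M}$ yields $W = ML^iM^{-1}$; using $M^{-1} = n^{-1}\mat{d}{-b}{-c}{a}$ I would compute
\[
W = \mat{1+bdi/n}{-b^2i/n}{d^2i/n}{1-bdi/n},
\]
and note that the $(1,2)$-entry $-b^2i/n$, which must be a nonnegative integer, forces $b=0$ (using $i>0$). Then $ad = n$ and $W = \mat{1}{0}{di/a}{1} = L^{di/a}$, which lies in $\Dj$ iff $di/a$ is a positive integer. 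This immediately gives the parenthetical $W = L^j$ (so $M \in \LS{}$), the characterisation $a \mid di$ for existence, the formula $i^* := a/\gcd(a,d) = \min\{k>0 : kd/a \in \N\}$ for the minimal $i$, and the bound $i^* \leq a \leq ad = n$.

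The remaining task is to confirm that $i^*$ is actually realised by a walk in $\T{}$. Starting from $M_0 := M = \mat{a}{0}{c_0}{d}$ (with $c_0 < \min(a,d)$ since $M\in\DB{}$), I would trace the transducer step by step. A short induction shows that every successor state has the shape $M_{k+1} = \mat{a}{0}{c_{k+1}}{d}$ with $c_{k+1} \in [0,\min(a,d))$: edge $k+1$ consumes $i_{k+1} = \lceil (a-c_k)/d \rceil$ letters $L$, emits $L^{q_{k+1}}$ with $q_{k+1} \geq 1$, and sets $c_{k+1} = (c_k + d i_{k+1}) \bmod a$, with $c_{k+1} < d$ following from $c_k + d i_{k+1} < a + d$. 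Writing $I_k := i_1+\cdots+i_k$, the induction $c_k \equiv c_0 + d I_k \pmod{a}$ then shows that $M_k = M_0$ if and only if $I_k$ is a positive multiple of $i^*$.

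The main obstacle will be showing that $i^*$ is not merely \emph{eventually} hit by the sequence $(I_k)$ but actually falls on an edge boundary. My plan is to assume for contradiction that the transducer is mid-edge after reading $i^*$ letters, i.e., that for some $k$ with $I_k < i^* < I_{k+1}$ the internal matrix $M_k L^{i^* - I_k}$ still lies in $\RB{}$. Combining the algebraic identity $M_0 L^{i^*} = L^{d/\gcd(a,d)} M_0$ with the transducer's partial decomposition $M_0 L^{i^*} = L^{Q_k} \cdot M_k L^{i^* - I_k}$ (where $Q_k = q_1 + \cdots + q_k$), the $\RB{}$-condition $c_k + d(i^* - I_k) < a$ will squeeze out $Q_k = d/\gcd(a,d)$ and $c_k + d(i^* - I_k) = c_0$; since $c_0 < d$, this in turn forces $0 < i^* - I_k < 1$, impossible for integers. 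Thus $i^*$ must lie on an edge boundary and the walk $\Twalk{M}{L^{i^*}}{L^{d/\gcd(a,d)}}{M}$ exists. This mid-edge exclusion is the only nontrivial step; everything else is either the matrix computation above or a straightforward induction.
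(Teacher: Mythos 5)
Your proof is correct, and it uses the same central algebraic computation as the paper: chasing $W = ML^{i}M^{-1}$, forcing $b=0$ from the sign of the $(1,2)$-entry, identifying $W = L^{di/a}$, and reading off the minimal $i^{*} = a/\gcd(a,d) \leq a \leq n$. The genuine difference is that you identify and explicitly close a gap the paper leaves open. The paper's proof opens with the assertion ``The walk $\Twalk{M}{L^i}{W}{M}$ exists if and only if $W \in \Dj$'' and then only verifies the algebraic condition on $W$; the direction ``$W \in \Dj \Rightarrow$ the walk in $\T{}$ actually closes up at $M$ after exactly $i$ input letters'' is never argued. You supply this argument in two parts: (i) an induction showing that, along a run of $L$'s, every state reached is of the form $\mat{a}{0}{c_k}{d}$ with $c_k \in [0,\min(a,d))$, $c_{k+1} = (c_k + d\,i_{k+1}) \bmod a$, and $c_k \equiv c_0 + d I_k \pmod a$; and (ii) a mid-edge exclusion: if $I_k < i^* < I_{k+1}$ then equating $M_0 L^{i^*} = L^{d/\gcd(a,d)}M_0$ with the partial transducer decomposition $L^{Q_k}M_kL^{i^*-I_k}$, together with the $\RB{}$ bound $c_k + d(i^*-I_k) < a$, forces $Q_k = d/\gcd(a,d)$ and then $c_k + d(i^*-I_k) = c_0 < d$, contradicting $d(i^*-I_k) \geq d$. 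All the arithmetic in both parts checks out (including $q_{k+1}\geq 1$, $c_{k+1}<\min(a,d)$, and the preservation of $\gcd$). An alternative way to close the same gap, which the paper may be implicitly invoking, is Raney's uniqueness of the $\Dj\times\RB{}$ factorization of a matrix in $\D{}$, from which the mid-edge exclusion drops out in one line; your argument is more elementary and self-contained, at the cost of a page of bookkeeping. Either way, your proposal is a strictly more complete proof than the one printed.
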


\begin{proof}
The walk $\Twalk{M}{L^i}{W}{M}$ exists if and only if $W  \in \Dj$.
We have
\[
W = ML^iM^{-1} = \mat{-\frac{b d i - b c + a d}{b c - a d}}{\frac{b^{2} i}{b c - a d}}{
-\frac{d^{2} i}{b c - a d}}{\frac{b d i + b c - a d}{b c - a d}}.
\]
As $\frac{b^{2} i}{b c - a d} = \frac{b^2i}{-n} \leq 0$ we have $\frac{b^2i}{bc-ad} \in \N \iff b = 0$.
Thus
\[
W = \mat{1}{0}{\frac{id}{a}}{1},
\]
and we conclude that $W \in \Dj$ if and only if $b = 0$ and $\frac{id}{a} \in \N$.
In other words, $W  = L^{\frac{id}{a}}$, which is by definition if and only if $M \in \LS{}$.

Let $i$ has the minimal possible value, i.e., $i = \min \left\{ k > 0 \colon \frac{kd}{a} \in \N \right\}$.
Since $n = ad$, we have $i \leq a \leq n$.
\end{proof}

\begin{example}
In the transducer $\mathcal{T}_{14}$, we have $\Twalk{M}{L^7}{W}{M}$ where $M = \mat{7}{0}{0}{2}$ and $W = L^2$.
Since $\Twalk{M}{L^7}{L^2}{M} = \Tedge{M}{L^4}{L}{\Tedge{N}{L^3}{L}{M}}$ where $N = \mat{7}{0}{1}{2}$, the minimal $i$ such that $\Twalk{M}{L^i}{W}{M}$ is $i = 7$.
This is also the minimal positive integer such that $\frac{id}{a} = \frac{2i}{7} \in \N$.
\end{example}

The following characteristic property of the matrices in the set $\LS{}$ follows from the proof of the last lemma.

\begin{corollary} \label{cor:tvar_LS}
Let $M = \mat{a}{b}{c}{d} \in \DB{}$.
We have $M  \in \LS{}$ if and only if $b = 0$.
\end{corollary}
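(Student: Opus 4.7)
The plan is to extract the characterization directly from the computation already performed in the proof of \Cref{le:char_toceni_na_L}. That proof wrote
\[
W = M L^i M^{-1} = \mat{-\frac{bdi - bc + ad}{bc-ad}}{\frac{b^2 i}{bc-ad}}{-\frac{d^2 i}{bc-ad}}{\frac{bdi + bc - ad}{bc-ad}},
\]
so the analysis of when $W \in \Dj$ is the heart of the matter.

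First, I would observe that by \Cref{le:char_toceni_na_L}, $M \in \LS{}$ is equivalent to the existence of some $i > 0$ with $W := ML^iM^{-1} \in \Dj$; in particular $W$ must have nonnegative integer entries. Looking at the upper-right entry $\frac{b^2 i}{bc - ad} = -\frac{b^2 i}{n}$, which is manifestly non-positive because $b \geq 0$, $i > 0$, and $n = |\det M| > 0$, nonnegativity forces this entry to be zero, i.e.\ $b = 0$. This establishes the ``only if'' direction.

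For the converse, if $b = 0$, then substituting $b = 0$ in the formula for $W$ collapses it to
\[
W = \mat{1}{0}{\tfrac{id}{a}}{1},
\]
and choosing $i$ to be the minimal positive integer with $\tfrac{id}{a} \in \N$ (which exists and is at most $a \leq n$, since $n = ad$) yields $W = L^{id/a} \in \Dj$. Hence the walk $\Twalk{M}{L^i}{L^{id/a}}{M}$ exists, so $M \in \LS{}$ by \Cref{def:LS_a_RS}.

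There is essentially no obstacle here: both directions are immediate readings of the explicit matrix formula. The only care needed is to point out that the sign of the numerator $b^2 i$ together with the sign of $bc-ad = -n$ forces $b = 0$ as soon as one demands nonnegative integer entries, which is what distinguishes the corollary from a generic computation. So the whole proof can be stated in a few lines by referencing the computation of $ML^iM^{-1}$ already carried out.
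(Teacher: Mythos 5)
Your proposal is correct and matches the paper's intent exactly: the paper has no separate proof for \Cref{cor:tvar_LS}, simply stating that it ``follows from the proof of the last lemma,'' and your argument spells that out by re-reading the computation of $W = ML^iM^{-1}$ from \Cref{le:char_toceni_na_L}, observing that the sign of the entry $\frac{b^2 i}{bc-ad} = -\frac{b^2 i}{n}$ forces $b = 0$, and conversely that $b = 0$ yields $W = L^{id/a}$ for suitable $i$.
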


\begin{example}
We have already seen that the matrices $\mat{7}{0}{0}{2}, \mat{7}{0}{1}{2} \in \mathcal{LS}_{14}$ and that $A_n \in \LS{}$, which corresponds with the fact that these matrices have the element on position 1,2 equal to 0. Moreover, by the symmetric version of this corollary, we have $M \in \RS{}$ if and only if $c = 0$, which corresponds with $A_n \in \RS{}$.
According to this condition, we can also see that $\mat{7}{0}{0}{2} \in \RS{}$. Indeed, we have $\Twalk{\mat{7}{0}{0}{2}}{R^2}{R^7}{\mat{7}{0}{0}{2}}$.
\end{example}

The last lemma and corollary imply the following statement.

\begin{corollary} \label{le:LS_nacteni_n}
For $M \in \LS{}$ there exists $s > 0$ such that $\Twalk{M}{L^n}{L^s}{M}$.
\end{corollary}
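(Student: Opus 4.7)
The plan is to combine Corollary \ref{cor:tvar_LS} (which characterizes elements of $\LS{}$ by $b = 0$) with the divisibility condition extracted from the proof of Lemma \ref{le:char_toceni_na_L} to show that $i = n$ is always an admissible input length for a returning walk of $L$'s.

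First I would invoke Corollary \ref{cor:tvar_LS} to write $M = \mat{a}{0}{c}{d}$, so that $n = \det M = ad$, in particular $a \mid n$. Next I would recall what the proof of Lemma \ref{le:char_toceni_na_L} actually establishes: for $M \in \DB{}$ and $i > 0$, a walk $\Twalk{M}{L^i}{W}{M}$ exists (with $W \in \Dj$) if and only if $b = 0$ and $\frac{id}{a} \in \N$, and in that case $W = L^{id/a}$. Thus, for $M \in \LS{}$, the integers $i$ that admit a returning $L$-loop are precisely the positive multiples of $\tfrac{a}{\gcd(a,d)}$, and in particular they include any multiple of $a$.

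Now I would specialize to $i = n$. Since $n = ad$, one has
\[
\frac{id}{a} = \frac{nd}{a} = \frac{ad \cdot d}{a} = d^2 \in \N,
\]
so the characterization above applies. Therefore the walk $\Twalk{M}{L^n}{L^{d^2}}{M}$ exists, and we may take $s = d^2 > 0$, which proves the corollary.

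I do not anticipate a real obstacle here; the statement is almost a direct consequence of what was already computed inside the proof of Lemma \ref{le:char_toceni_na_L}. The only minor care needed is to note that a single returning walk of input length $i_0 = \frac{a}{\gcd(a,d)}$ can be iterated $n/i_0$ times to reach input length $n$, which is legitimate because $i_0 \mid a \mid n$; this is essentially the alternative packaging of the same argument.
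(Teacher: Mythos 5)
Your proof is correct and matches the paper's intent exactly: the paper offers no explicit proof, only the remark that the statement follows from Lemma~\ref{le:char_toceni_na_L} and Corollary~\ref{cor:tvar_LS}, and your argument is precisely the computation one performs to check this. The key step $\frac{nd}{a} = d^2 \in \N$ is the right observation, and $s = d^2 > 0$ since $d \geq 1$.
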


\begin{example}
As already mentioned, we have $\Twalk{M}{L^7}{L^2}{M} $ where $M = \mat{7}{0}{0}{2} \in \mathcal{LS}_{14}$.
The last corollary shows that we have also $\Twalk{M}{L^{14}}{L^4}{M} = (\Twalk{M}{L^7}{L^2}{M})^2$.

As stated above, we have  $\Twalk{M}{L^7}{L^2}{M} = \Tedge{M}{L^4}{L}{\Tedge{N}{L^3}{L}{M}}$ where $N = \mat{7}{0}{1}{2}$.
This means that there is a connection between the matrices $\mat{7}{0}{0}{2}$ and $\mat{7}{0}{1}{2} \in \mathcal{LS}_{14}$.
The classes of matrices in $\LS{}$ with this connection are described in the following lemma.
\end{example}

\begin{lemma} \label{le:trida_L}
Let $M, N \in \LS{}$, $M = \mat{a}{0}{c}{d}$ and $N = \mat{a'}{0}{c'}{d'}$.
We have
\[
\Twalk{M}{L^i}{W}{N} \text{ for some $i > 0$ and $W \in \Dj$} \quad \Longleftrightarrow \quad a' = a, d' = d, c' \equiv c \pmod{ \gcd(a,d)}.
\]
\end{lemma}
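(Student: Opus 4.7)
The plan is to translate the walk $\Twalk{M}{L^i}{W}{N}$ into the matrix equation $ML^i = WN$ and extract the arithmetic content by explicit computation, using \Cref{cor:tvar_LS} to fix the shape of $M$ and $N$. Since both matrices are in $\LS{n}$, the corollary forces their $(1,2)$-entries to vanish, so $\det M = ad = n = a'd' = \det N$.

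For the forward direction I would compute
\[
W = ML^i N^{-1} = \frac{1}{n}\mat{ad'}{0}{(c+di)d' - dc'}{da'}.
\]
The determinant equals $1$ for free, and requiring nonnegative integer diagonal entries gives $a'\mid a$ and $d'\mid d$; combined with $ad = a'd'$, this forces $a = a'$ and $d = d'$. Under this identification the $(2,1)$-entry collapses to $(c+di-c')/a$, so integrality for some $i > 0$ is equivalent to solvability of $di \equiv c'-c \pmod{a}$, which in turn is equivalent to $\gcd(a,d) \mid c'-c$. For the reverse direction, given the three stated conditions, I would pick $i > 0$ with $di \equiv c'-c \pmod{a}$ and $c+di \ge c'$ (infinitely many such $i$ exist, since one can add multiples of $a/\gcd(a,d)$), set $W = L^{(c+di-c')/a}$, which manifestly lies in $\Dj$, and verify $ML^i = WN$ by direct multiplication.

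To upgrade this matrix identity to a genuine walk, I would argue that reading $L^i$ from $M$ in $\T{n}$ is a deterministic process built from edges supplied by \Cref{thm:Raney_edge}: the walk completes at length $i$ precisely when $ML^i$ admits a factorization $W^\star N^\star$ with $W^\star \in \Dj$ and $N^\star \in \DB{n}$, and the iterated edge decomposition then uniquely determines $W^\star$ and $N^\star$, so our constructed $(W,N)$ must coincide with $(W^\star, N^\star)$. The main obstacle I anticipate is exactly this last step — upgrading a matrix identity to an actual walk — which I would settle by the determinism of the transducer and uniqueness of the Raney edge decomposition. The rest is elementary matrix arithmetic: the vanishing of the $(1,2)$-entries trivialises the off-diagonal conditions and leaves a single linear congruence in~$i$.
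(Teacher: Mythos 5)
Your argument is correct, and the overall strategy — translate the walk into the matrix relation $ML^i = WN$, exploit $W\in\Dj$, and reduce to a linear congruence — is the same as the paper's. The technical route differs in one place worth noting. In the forward direction the paper first pins down the \emph{form} of the output: it invokes \Cref{le:char_toceni_na_L} (via the observation that from $M\in\LS{}$ reading $L$'s only ever emits $L$'s and only ever visits $\LS{}$-states) to conclude $W = L^j$, and then reads off $N = L^{-j}ML^i = \mat{a}{0}{c+id-ja}{d}$, so $a'=a$, $d'=d$ come for free and the congruence is immediate. You instead compute $W = ML^iN^{-1}$ symbolically, use the integrality of the diagonal entries $a/a'$ and $d/d'$ together with $ad = a'd' = n$ to force $a'=a$, $d'=d$, and then the $(2,1)$-entry yields the congruence. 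Your version avoids leaning on \Cref{le:char_toceni_na_L} entirely, which is a small gain in self-containedness (and sidesteps the fact that the lemma as stated concerns closed walks $M\to M$ rather than walks $M\to N$, so the paper's appeal to it really needs the extra sentence that one can extend any walk $\Twalk{M}{L^i}{W}{N}$ by further $L$'s back to $M$). In the reverse direction your choice of $i>0$ satisfying $di\equiv c'-c\pmod a$ and $c+di\ge c'$ is a streamlined version of the paper's B\'ezout argument with the $L^{rn}$ correction, but substantively equivalent. Both you and the paper implicitly pass from the matrix identity $ML^i=WN$ to the existence of the walk $\Twalk{M}{L^i}{W}{N}$; you at least flag and sketch this step, whereas the paper treats it as understood, so your write-up is if anything slightly more careful on that point.
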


\begin{proof}
Let $\Twalk{M}{L^i}{W}{N}$ for some $i > 0$ and $W \in \Dj$.
\Cref{le:char_toceni_na_L} implies that we may take $W = L^j$ for some $j > 0$ and $i \leq n$.
We have
\[
N = L^{-j} M L^i = \mat{a}{0}{c + id - ja}{d} \in \LS{} \subseteq \DB.
\]
Since we have $id - ja = k \gcd(a,d)$ for some $k \in \Z$, the first implication is proven.

Assume now $a' = a, d' = d, c' = c + k\gcd(a,d)$.
The case $c = c'$ is trivial.
For $c \neq c'$ we find $i',j'$ with $i'j' < 0$ such that $k\gcd(a,d) = i'd + j'a $.
It implies $ML^{i'} = L^{-j'}N$.
If $j' < 0$, we are finished.
If $j' > 0$, we multiply by $L^{rn}$ from the right to obtain
\[
ML^{i'+rn} = L^{-j'}NL^{rn} = L^{-j'+rs}N,
\]
where $s$ is the positive integer such that $\Twalk{N}{L^n}{L^s}{N}$.
A choice of $r$ such that $-j'+rs > 0$ implies $\Twalk{M}{L^{i'+rn}}{L^{-j'+rs}}{N}$.
\end{proof}

The existence of a walk $\Twalk{M}{L^i}{W}{N}$ is in fact an equivalence relation between $M$ and $N$.
For each class of this equivalence, we pick a suitable representative in the following definition.

\begin{definition}\label{def:LE_n}
Let $\LE{}$ $(\RE{})$ denote the subset of $\LS{}$ $(\RS{})$ such that $M = \mat{a}{0}{c}{d} \in \LE{}$ $(N = \mat{a}{b}{0}{d} \in \RE{})$ if and only if $c < \gcd(a,d)$ $(b< \gcd(a,d))$.
\end{definition}

% \begin{remark}
The names $\LE{}$ and $\RE{}$ are abbreviations for ``$L$-exceptional'' and ``$R$-exceptional''.
% \end{remark}
For instance, $\mat{7}{0}{0}{2} \in \LE[14]{}$ and $\mat{7}{0}{1}{2} \not \in \LE[14]{}$ because $\gcd(a,d) = 1$.

Combining this definition with the last lemma, we immediately obtain the following corollary which says that the suitable representative is unique.

\begin{corollary} \label{cor:jednoznacnost_LE}
Let $M \in \LS{}$.
There is exactly one $N \in \LE{}$ such that $\Twalk{M}{L^i}{W}{N}$ exists for some $i \geq 0$ and $W \in \Dj$.
\end{corollary}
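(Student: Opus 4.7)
The plan is to view \Cref{le:trida_L} as describing the equivalence relation on $\LS{}$ generated by the existence of a walk reading only $L$'s, and then exhibit $\LE{}$ as a complete system of representatives for this relation. So I will prove existence and uniqueness separately.

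For existence, write $M = \mat{a}{0}{c}{d}$ (the zero in position $(1,2)$ comes from $M \in \LS{}$ via \Cref{cor:tvar_LS}). Set $g = \gcd(a,d)$ and let $c' \in \{0,1,\ldots,g-1\}$ be the residue of $c$ modulo $g$, and define $N = \mat{a}{0}{c'}{d}$. I need to check $N \in \LE{}$: (i) $\det N = ad = n$; (ii) $\gcd(a,c',d) = \gcd(g,c') = \gcd(g,c) = \gcd(a,c,d) = 1$ since $M \in \D{}$; (iii) the inequalities $a > c'$ and $d > c'$ (hence $N \in \RB{} \cap \CB{} = \DB{}$) follow from $c' < g \leq \min(a,d)$, while $a > 0 = b$ and $d > 0 = b$ are immediate; (iv) $b = 0$ places $N$ in $\LS{}$ by \Cref{cor:tvar_LS}; (v) $c' < g$ is exactly the condition defining $\LE{}$. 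Finally, if $c = c'$ then $N = M$ and we take $i = 0$; otherwise $c' \equiv c \pmod g$ with $c' \neq c$, and \Cref{le:trida_L} produces a walk $\Twalk{M}{L^i}{W}{N}$ with $i > 0$.

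For uniqueness, suppose $N_1, N_2 \in \LE{}$ both admit walks $\Twalk{M}{L^{i_k}}{W_k}{N_k}$, $k = 1, 2$ (allowing $i_k = 0$, in which case $N_k = M$; note that this forces $M \in \LE{}$ and only the trivial representative is in play). Applying \Cref{le:trida_L} to each walk (the direction requiring $i > 0$ is easily reduced to, or $N_k = M$ is handled directly), we obtain that $N_1$ and $N_2$ share the entries $a$ and $d$ with $M$, and that $c_1 \equiv c \equiv c_2 \pmod{\gcd(a,d)}$. Since by the definition of $\LE{}$ both $c_1$ and $c_2$ lie in $\{0,1,\ldots,\gcd(a,d)-1\}$, this congruence forces $c_1 = c_2$, so $N_1 = N_2$.

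The only subtlety I expect concerns the case $i = 0$ in \Cref{le:trida_L}: that lemma is stated for $i > 0$, so I need to either treat $i = 0$ as the trivial identity walk separately, or observe that \Cref{le:char_toceni_na_L} guarantees, once $M \in \LS{}$, that $M$ is at least connected to itself by a nontrivial walk, so the equivalence class is well-defined in either formulation. Apart from this, the argument is essentially a bookkeeping exercise in reducing a residue class mod $\gcd(a,d)$ to its canonical representative.
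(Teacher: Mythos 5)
Your proof is correct and follows essentially the same route as the paper: both arguments rest on \Cref{le:trida_L}, which characterizes exactly which $N \in \LS{}$ admit a walk $\Twalk{M}{L^i}{\bullet}{N}$ (same $a$, same $d$, $c$-entry congruent modulo $\gcd(a,d)$), combined with the observation from \Cref{def:LE_n} that precisely one residue $c' < \gcd(a,d)$ lies in that class. The paper's version is a three-line sketch; you spell out the verification that the candidate $N = \mat{a}{0}{c'}{d}$ actually lies in $\DB{}$, $\LS{}$ and $\LE{}$, and you flag the $i = 0$ edge case (since \Cref{le:trida_L} is stated for $i > 0$), both of which are details the paper leaves implicit but which do no harm to include.
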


\begin{proof}
Let $M = \mat{a}{0}{c}{d}$.
\Cref{le:trida_L} implies that the walk $\Twalk{M}{L^i}{W}{N'} $ exists if and only if $N' = \mat{a}{0}{c'}{d}$ and $c' \equiv c \pmod{ \gcd(a,d)}$.
As there is exactly one such $c' < \gcd(a,d)$, the claim follows from \Cref{def:LE_n}.
\end{proof}

%It follows form \Cref{le:trida_L} that for each $M \in \LS{}$ there is exactly one matrix $N \in \LE{}$ such that $N \in \Lclass{M}$.

For each state $M \in \LE{}$, we shall need to know the least number $i$ such that we can get from $M$ to $M$ by reading $L^i$ as an input word.

\begin{definition} \label{def:nu}
For $M \in \LE{}$ we set
\[
\LclassMax{M} = \min \left\{ i > 0 \colon \Twalk{M}{L^i}{L^j}{M} \text{ exists for some } j \right\}.
\]
For $M \in \RE{}$, we define $\RclassMax{M}$ analogously:% and $\RclassNext{M}$ analogously.
\[
\RclassMax{M} = \min \left\{ i > 0 \colon \Twalk{M}{R^i}{R^j}{M} \text{ exists for some } j \right\}.
\]
\end{definition}

\begin{example}
For the state $M = \mat{7}{0}{0}{2} \in \LE[14]{}$, we have $\LclassMax{M} = 7$.
Moreover, we have $M \in \RE[14]{}$ and $\RclassMax{M} = 2$.
\end{example}

The purpose of the definition of the sets $\LE$ and $\RE$ is in the following lemma.

\begin{lemma} \label{le:spadnu_do_LS}
Let $Q \in \DB{}$.
There exists exactly one matrix $Z \in \LE{}$ such that $\Twalk{Q}{L^k}{W}{Z}$ for some $k \geq 0$ and $ W \in \Dj$.

Moreover, the integer $k$ can be chosen such that $k \leq n$.
\end{lemma}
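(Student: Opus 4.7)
The approach is to trace the deterministic walk of the transducer $\T{}$ starting at $Q$ while reading $L$'s indefinitely, identify a cycle of $\LS{}$-states, and apply \Cref{cor:jednoznacnost_LE} to extract its unique $\LE{}$-representative.

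For existence, by \Cref{le:max_n_jednoho_pismene} every $\DB{}$-state admits an outgoing edge with input $L^i$ for some $i \leq n$. Firing such edges successively from $Q$ produces an infinite sequence $Q = Q_0, Q_1, Q_2, \ldots$ in $\DB{}$ with transitions $\Tedge{Q_j}{L^{i_{j+1}}}{\bullet}{Q_{j+1}}$. Finiteness of $\DB{}$ together with the pigeonhole principle gives indices $s < t$ with $Q_s = Q_t$, so the walk $\Twalk{Q_s}{L^{i_{s+1}+\cdots+i_t}}{\bullet}{Q_s}$ exists. By \Cref{le:char_toceni_na_L} its output is a run of $L$'s and $Q_s \in \LS{}$; then \Cref{cor:jednoznacnost_LE} supplies the unique $Z \in \LE{}$ reachable from $Q_s$ via some $L^j$, and concatenation gives $\Twalk{Q}{L^k}{W}{Z}$.

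For uniqueness, suppose both $\Twalk{Q}{L^{k_1}}{\bullet}{Z_1}$ and $\Twalk{Q}{L^{k_2}}{\bullet}{Z_2}$ end in $\LE{}$, with $k_1 \leq k_2$. Determinism of the walk on a fixed input implies that the walk on $L^{k_2}$ extends the one on $L^{k_1}$ and in particular passes through $Z_1$, so $\Twalk{Z_1}{L^{k_2 - k_1}}{\bullet}{Z_2}$ exists. Since $Z_1 \in \LE{} \subseteq \LS{}$, \Cref{cor:jednoznacnost_LE} applied to $Z_1$ (with the empty walk witnessing $Z_1$ itself as the unique $\LE{}$-representative of its class) forces $Z_2 = Z_1$.

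For the bound $k \leq n$, the case $Q \in \LE{}$ is handled by $k = 0$. Otherwise the cycle input length at $Q_s$ is at most $n$ by \Cref{le:char_toceni_na_L}, so the offset from $Q_s$ to $Z$ within that cycle is strictly less than $n$. The delicate point is to control the pre-cycle input $i_1 + \cdots + i_s$ jointly with this cycle offset, since the naive estimate $s \leq |\DB{}|$ is far too coarse. I expect this to be the main technical obstacle and anticipate resolving it by a direct matrix-entry analysis in the spirit of \Cref{le:trida_L}: solving the affine congruence that characterizes $Z$ and exploiting that the smallest admissible $k$ is governed by the row parameter of $Z$ (which divides $n$) rather than by the total number of edges fired.
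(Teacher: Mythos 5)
Your existence and uniqueness arguments are sound and broadly parallel the paper's: both rely on finiteness of $\DB{}$ to reach an $\LS{}$-state, on \Cref{le:char_toceni_na_L} to recognize the looping behaviour, and on \Cref{cor:jednoznacnost_LE} to pick out the unique $\LE{}$-representative. Your uniqueness argument is in fact slightly more economical than the paper's (you compare the two walks directly via determinism and invoke \Cref{cor:jednoznacnost_LE} once at $Z_1$, rather than first isolating the earliest $\LS{}$-state $M$).

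However, the bound $k \leq n$ is not proved — you explicitly flag it as ``the main technical obstacle'' and only \emph{anticipate} resolving it. That is a genuine gap, and it is the substantive part of the second sentence of the lemma; the naive pre-cycle estimate you correctly reject is exactly what one would get from the walk-tracing approach, so some new idea is required. The paper's actual argument does not bound the walk length at all. Instead it takes \emph{any} $k'$ for which $\Twalk{Q}{L^{k'}}{W'}{Z}$ exists, sets $k \equiv k' \pmod{n}$ with $k \in \{1,\ldots,n\}$, and then shows directly that $W = QL^k Z^{-1}$ lies in $\Dj$: writing $Q = \left(\begin{smallmatrix} a & b \\ c & d\end{smallmatrix}\right)$ and $Z = \left(\begin{smallmatrix} a' & 0 \\ c' & d'\end{smallmatrix}\right)$, the two entries of $W'$ that depend on $k'$ have numerators linear in $k'$ with coefficient divisible by $a'd' = n$, so changing $k'$ to $k$ (differing by a multiple of $n$) preserves integrality; nonnegativity comes from $k \geq 1$, $d' > c'$ and the nonnegativity of $Q$; and $\det W = 1$ holds identically. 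Your intended ``affine congruence'' route might be made to work, but as written it is a placeholder, not a proof, and the modular-reduction trick the paper uses is the key idea you are missing.
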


\begin{proof}
\Cref{le:char_toceni_na_L} and $\# \DB{}< + \infty$ imply  that there exist $j \geq 0$, $M \in \LS{}$ and $W_1 \in \Dj$ such that $\Twalk{Q}{L^j}{W_1}{M}$.
Let $j$ be the least possible.
Such $M$ is unique (depending on $Q$ only).
\Cref{cor:jednoznacnost_LE} implies that there is exactly one $Z \in \LE{}$ such that $\Twalk{M}{L^{j'}}{W_2}{Z}$ for some $j' \in \N$ and $W_2 \in \Dj$.
It follows that $\Twalk{Q}{L^{j+j'}}{W_1W_2}{Z}$.
The uniqueness of $Z$ follows from the uniqueness of $M$ and the uniqueness of $Z$ by \Cref{cor:jednoznacnost_LE}.

To prove the second part, we suppose that the walk $\Twalk{Q}{L^{k'}}{W'}{Z}$ exists for some $k' \in \N$ and $W' \in \Dj$.
Let $Q = \mat{a}{b}{c}{d}$ and $Z = \mat{a'}{0}{c'}{d'}$. It implies that
\[
W' = QL^{k'}Z^{-1} = \mat{\frac{b {{d'}} k' - b {{c'}} + a {{d'}}}{{{a'}} {{d'}}}}{\frac{b}{{{d'}}}}{\frac{d {{d'}} k' - {{c'}} d + c {{d'}}}{{{a'}} {{d'}}}}{\frac{d}{{{d'}}}} \in \Dj.
\]
Set $k$ such that $k \equiv k' \pmod n$ and $k \in \{1,\ldots,n\}$.
Since $\frac{b {{d'}} k' - b {{c'}} + a {{d'}}}{{{a'}} {{d'}}}$ and $\frac{d {{d'}} k' - {{c'}} d + c {{d'}}}{{{a'}} {{d'}}}$ are both integers, $a'd' = n$, and $d' > c'$, we have that $\frac{b {{d'}} k - b {{c'}} + a {{d'}}}{{{a'}} {{d'}}}$ and $\frac{d {{d'}} k - {{c'}} d + c {{d'}}}{{{a'}} {{d'}}}$ are positive integers.
Therefore, $W = QL^{k}Z^{-1}$ is a matrix of nonnegative integer elements.
We verify by direct calculation that $\det (W) = 1$, and thus $W \in \Dj$ and the walk $\Twalk{Q}{L^k}{W}{Z}$ exists.
\end{proof}

The last lemma says that if we start in an arbitrary state of the transducer $\T{}$ and we read a long enough run of $L$'s (at most of length $n$) of the input word, we end in some state from $\LE{}$.
Moreover, if we continue to read only the letters $L$, we can attain only the states from $\LS{}$ as follows from \Cref{le:char_toceni_na_L}, and, in particular, we have to return to the same state from $\LE{}$ (\Cref{cor:jednoznacnost_LE}).

\begin{example}
If we take $Q = \mat{4}{2}{1}{4} \in \mathcal{DB}_{14}$, we have $\Twalk{Q}{L^5}{LRL}{M}$ where $M = \mat{7}{0}{0}{2} \in \LE{}$ and if we continue to read only $L$'s, we go through the walk  $\Twalk{M}{L^7}{L^2}{M} = \Tedge{M}{L^4}{L}{ \Tedge{N}{L^3}{L}{M}}$ where $N = \mat{7}{0}{1}{2} \in \LS{}$. Similarly, we have $\Twalk{Q}{R^{10}}{R^3L}{\assoc{A}_{14}}$ where $\assoc{A}_{14} = \mat{1}{0}{0}{14} \in \RE{}$ and if we continue to read only $R$'s, we go through the edge $\Tedge{\assoc{A}_{14}}{R^{14}}{R}{\assoc{A}_{14}}$.
\end{example}

\section{Construction of the bound $S_n$} \label{sec:constr}

In the previous section, we have defined the transducer $\T{}$ and stated its important properties that are used in the construction of the upper bound $S_n$ of \Cref{thm:main}.
This section is dedicated to the construction of this bound.

\subsection{Maximalisation of the prolongation}
\label{sec:kappa}

We define the mapping $\kappa: \Dj \to \Dj$ which shall be used to modify the runs of a word such that their length is within a suitable interval.
For $W = Z_0^{i_0}Z_1^{i_1} \cdots Z_k^{i_k} \in \Dj$ with $i_\ell$ nonzero, $Z_\ell \in \left\{ L,R \right\}$, and $Z_{\ell} \neq Z_{\ell+1}$
we set
\[
\kappa(W) = Z_0^{i_0'}Z_1^{i_1'} \cdots Z_k^{i_k'}
\]
where $i_\ell' \equiv i_\ell \pmod{n}$ and $i_\ell' \in \{4n,4n+1,\ldots,5n-1\}$ for all $\ell \in \{0,\dots,k\}$.
For instance, if $n = 10$, we have
\[
\kappa(LR^{10}L^{55}) = L^{41}R^{40}L^{45}.
\]

First, we show why we do not need runs longer than $5n-1$.

\begin{lemma} \label{le:vyfouknuti_n}
Let $\Twalk{M}{V_1L^mV_2}{W}{N}$.
If $m \geq 4n$,
then $\Twalk{M}{V_1L^{m-n}V_2}{W'}{N}$, $\sigma(W') = \sigma(W)$, and $W'$ starts with the same letter as $W$.
\end{lemma}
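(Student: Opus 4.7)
The plan is to locate inside the given walk a loop $\Twalk{Z}{L^n}{L^s}{Z}$ with $Z \in \LE$ and excise it so as to shorten the input by exactly $n$ letters. The two main tools are \Cref{le:spadnu_do_LS}, which guarantees that from any state one reaches some $Z \in \LE$ after reading at most $n$ letters $L$, and \Cref{le:LS_nacteni_n}, which provides such a loop $\Twalk{Z}{L^n}{L^s}{Z}$ for every $Z \in \LE$.

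First I would identify the edge $A_{i_0}$ of the walk whose input first enters the $L^m$ block. By \Cref{le:max_n_jednoho_pismene} the number $\alpha$ of letters $L$ from $L^m$ contained in $A_{i_0}$ satisfies $1 \leq \alpha \leq n$. Let $M'$ be the state reached right after $A_{i_0}$ and apply \Cref{le:spadnu_do_LS} to obtain $Z \in \LE$ and $t_0 \leq n$ with $\Twalk{M'}{L^{t_0}}{U}{Z}$. Since the outgoing edges at every state form a prefix code on their inputs (\Cref{thm:Raney_edge}), walks are deterministic in the starting state and input word, so this sub-walk coincides edge-by-edge with the initial edges of the given walk from $M'$; consequently $Z$ is visited by the given walk after a total of $\alpha + t_0 \leq 2n$ letters $L$ from $L^m$ have been read. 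The hypothesis $m \geq 4n$ guarantees that at least $m - 2n \geq 2n$ further $L$'s remain in $L^m$, so by \Cref{le:LS_nacteni_n} and determinism the next $n$ of them form the loop $\Twalk{Z}{L^n}{L^s}{Z}$ inside the given walk.

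The given walk then factors as
\[
\Twalk{M}{V_1 L^{\alpha + t_0}}{W_a}{Z} \cdot \Twalk{Z}{L^n}{L^s}{Z} \cdot \Twalk{Z}{L^{m - \alpha - t_0 - n} V_2}{W_b}{N},
\]
with $W = W_a L^s W_b$. The candidate walk is obtained by concatenating the first and third factors, yielding $\Twalk{M}{V_1 L^{m-n} V_2}{W'}{N}$ with $W' = W_a W_b$. Since $\alpha \geq 1$, the walk to $Z$ contains at least the edge $A_{i_0}$ and hence $W_a$ is non-empty, so $W$ and $W'$ both begin with the first letter of $W_a$. For $\sigma(W) = \sigma(W')$ I would argue as follows: the inequality $m - \alpha - t_0 - n \geq n \geq 1$ means the first edge of the third factor reads an input that lies entirely in the remaining $L$'s, so by \Cref{le:stejna_pismena} its output begins with $L$, and therefore so does $W_b$. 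Thus the $L^s$ in $W = W_a L^s W_b$ merges with the opening $L$-run of $W_b$, and its removal merely shortens that run without creating or splitting any other.

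The main subtlety I foresee is the alignment of the abstract walks produced by \Cref{le:spadnu_do_LS} and \Cref{le:LS_nacteni_n} with the edge boundaries of the given walk; this is resolved by the determinism of transducer walks induced by the prefix code structure of outgoing edges from \Cref{thm:Raney_edge}. The bound $m \geq 4n$ is then used twice: once so the loop $L^n$ fits after at most $2n$ letters are spent reaching $Z$, and once more so that at least one $L$ of $L^m$ still remains after the loop, forcing $W_b$ to begin with $L$.
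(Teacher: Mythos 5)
Your proposal is correct and follows the same overall strategy as the paper: use \Cref{le:max_n_jednoghr_pismene} and \Cref{le:spadnu_do_LS} to locate a state $Z \in \LE{}$ reached after reading at most $2n$ letters of the $L^m$ block, then excise one pass of the loop $\Twalk{Z}{L^n}{L^s}{Z}$ provided by \Cref{le:LS_nacteni_n}. Where you diverge from the paper is the argument that $\sigma$ is preserved. The paper writes $m-t-k = qn+p$ with $p<n$, observes that $m\geq 4n$ forces $q \geq 2$, and gathers all $q$ loop passes into the factor $L^{qs}$, so the output is $W_1W_2L^{qs}W_3$; removing one pass yields $W_1W_2L^{(q-1)s}W_3$ with $(q-1)s \geq 1$, and the run count is trivially unchanged because the middle $L$-block merely shrank but did not vanish. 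Your version instead isolates a single pass as $W = W_aL^sW_b$ and removes it entirely, so you need the extra fact that $W_b$ starts with $L$, which you obtain from \Cref{le:stejna_pismena} after noting that $m-\alpha-t_0-n\geq n$ forces the first edge after the excised loop to read only $L$'s. Both arguments are valid and use the hypothesis $m\geq 4n$ in essentially the same way; the paper's bookkeeping is marginally slicker (it never needs \Cref{le:stejna_pismena}), while yours makes more explicit the role of determinism coming from the prefix-code structure of \Cref{thm:Raney_edge}.

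(One small typo fix: in the first sentence I intended \Cref{le:max_n_jednoho_pismene}.)
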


\begin{proof}
\Cref{le:max_n_jednoho_pismene} implies that there exists an integer $t$ with $0 < t \leq n$ and $Q \in \DB{}$ such that
\[
\Twalk{M}{V_1L^{t}}{W_1}{Q}
\]
for some $W_1 \in \Dj$.

By \Cref{le:spadnu_do_LS}, there exists an integer $k$ with $k \leq n$ and $Z \in \LE{}$ such that
\[
\Twalk{Q}{L^k}{W_2}{Z}
\]
for some $W_2 \in \Dj$.

As $Z \in \LE{} \subseteq \LS{}$ we have
\[
\Twalk{Z}{L^n}{L^s}{Z}
\]
for some positive integer $s$.

Let $q$ and $p$ be the integers such that $p = (m-t - k) \bmod{n}, p < n$ and $m-t-k = qn + p$.
Since $m \geq 4n \geq 2n + k + t$, we have $q \geq 2$ and there exists the walk
\[
\Twalk{Z}{L^pV_2}{W_3}{N}
\]
 and $W = W_1W_2L^{qs}W_3$.

As $q \geq 2$ implies that the walk $\Twalk{Z}{L^n}{L^s}{Z}$ is used at least twice while reading $V_1L^mV_2$,
we conclude that while reading $V_1L^{m-n}V_2$, we take this walk one less time, but at least once.
Thus, we have
\[
\Twalk{M}{V_1L^{m-n}V_2}{W_1W_2L^{(q-1)s}W_3}{N}
\]
and $\sigma(W) = \sigma(W_1W_2L^{qs}W_3) = \sigma(W_1W_2L^{(q-1)s}W_3)$.
\end{proof}

Similarly, we show that we shall not need the input words with runs shorter than $4n$.
We start with a lemma.

\begin{lemma} \label{le:nafouknuti_jedno}
Let $\Tedge{M}{VL^iR^j}{W}{N}$ with $i \geq 0$, $j \geq 1$, $V \in \Dj$, and $M,N \in \DB{}$.
There exist $Q \in \DB{}$, $W_1, W' \in \Dj$ and an integer $t$ with $0 < t \leq n$ such that for all $r \geq 3$
\[
\Tedge{M}{VL^{i+t}}{W_1}{Q} \quad \text{ and } \quad \Twalk{Q}{L^{rn-t}R^j}{W'}{N}
\]
with $\sigma(W') > \sigma(W)$. Moreover, the word $W'$ has suffix $R^{-1}W$.
\end{lemma}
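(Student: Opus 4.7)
The plan begins by defining $t$ as the smallest positive integer with $MVL^{i+t}\not\in\RB{}$. By \Cref{le:max_n_jednoho_pismene}, $t\le n$; every proper prefix of $VL^{i+t}$ lies in $\RB{}$, being either a proper prefix of the original edge's input $VL^iR^j$ (in $\RB{}$ by hypothesis) or of the form $VL^{i+s}$ for $0<s<t$ (in $\RB{}$ by minimality of $t$). \Cref{thm:Raney_edge} then delivers the unique edge $\Tedge{M}{VL^{i+t}}{W_1}{Q}$ with $Q\in\DB{}$ and $W_1\in\Dj$. Combining $MVL^{i+t}=W_1Q$ with $MVL^iR^j=WN$ yields the algebraic bridge
\[
QL^{-t}R^j \;=\; W_1^{-1}W\cdot N,
\]
which will be used repeatedly to tie the new walk from $Q$ back to the original endpoint $N$.

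Next I construct $\Twalk{Q}{L^{rn-t}R^j}{W'}{N}$ by concatenating three sub-walks that thread through $\LS{}$. \Cref{le:spadnu_do_LS} gives $\Twalk{Q}{L^{k_Q}}{W_a}{Z}$ with $Z\in\LE{}$ and $k_Q\le n$; \Cref{le:LS_nacteni_n} gives $\Twalk{Z}{L^n}{L^{s_Z}}{Z}$, iterated $p$ times. Setting $k_2:=rn-t-k_Q-pn\in[0,n)$, the hypothesis $r\ge 3$ combined with $t,k_Q\le n$ forces $p\ge 1$ and $k_2+k_Q+t=(r-p)n$ with $r-p\ge 1$. Since $Z\in\LS{}$, iteration of $ZL^nZ^{-1}=L^{s_Z}$ gives $ZL^{(r-p)n}=L^{(r-p)s_Z}Z$, and combining with the bridge identity yields
\[
ZL^{k_2}R^j \;=\; L^{(r-p)s_Z}\,W_a^{-1}W_1^{-1}W\cdot N,
\]
equivalently $W_b:=ZL^{k_2}R^jN^{-1}=L^{(r-p)s_Z}W_a^{-1}W_1^{-1}W$. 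Uniqueness of the $\Dj\cdot\DB{}$-factorization then identifies the third sub-walk's endpoint as $N$ and shows $W_b\in\Dj$. Concatenating the three outputs produces $W'=W_aL^{rs_Z}W_a^{-1}W_1^{-1}W$.

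The main obstacle is verifying $W_b\in\Dj$, i.e., that the matrix $ZL^{k_2}R^jN^{-1}$ has nonnegative integer entries. My plan is a direct bookkeeping argument: the negative contributions from $N^{-1}$ are controlled by the $\DB{}$-constraints $a_N,d_N>\max(b_N,c_N)$, while the left factor $L^{(r-p)s_Z}$ supplies a lower-triangular shift of size at least $s_Z$; the hypothesis $r\ge 3$ is calibrated exactly so that this shift dominates in the worst case (when $t+k_Q$ approaches $2n$). Once $W_b\in\Dj$ is secured, the remaining assertions follow by inspection. The last edge of the composed walk reads an input terminating in $R^j$ and lands at $N$, so by \Cref{le:stejna_pismena} its output starts with $R$; a comparison with the original edge via the bridge identity shows that this output matches $W$ on its last $|W|-1$ letters, giving $W'$ the suffix $R^{-1}W$. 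Finally, the inserted $L$-run of length $rs_Z\ge s_Z>0$ in the decomposition $W_aL^{rs_Z}W_a^{-1}W_1^{-1}W$ of $W'$ produces at least one run not present in $W$, hence $\sigma(W')>\sigma(W)$.
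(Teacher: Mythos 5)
Your setup matches the paper's: you choose $t$ minimal with $MVL^{i+t}\not\in\RB{}$ (so $t\le n$ by \Cref{le:max_n_jednoho_pismene} and \Cref{thm:Raney_edge} applies), thread through a state $Z\in\LE{}$ via \Cref{le:spadnu_do_LS} and \Cref{le:LS_nacteni_n}, and derive the identity $W'$ should satisfy. Up to notation ($W_a$ for the paper's $W_2$, $k_Q$ for $k$, $s_Z$ for $s$) this is exactly the paper's first half.

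The gap is the part you flag as ``the main obstacle'' and then do not actually do. You assert that $W_b=L^{(r-p)s_Z}(W_1W_a)^{-1}W\in\Dj$ follows by ``direct bookkeeping'' because the left factor $L^{(r-p)s_Z}$ ``dominates'' for $r\ge 3$, with the worst case being $t+k_Q$ near $2n$. That is not how the bound $r\ge 3$ arises, and the bookkeeping sketch cannot work as stated: the entries of $(W_1W_a)^{-1}$ are not a priori bounded in terms of $n$ (the walk from $M$ to $Z$ can have an arbitrarily long input $VL^{i+t+k_Q}$), so no fixed power of $L$ on the left ``dominates in the worst case.'' The paper instead proves $W_3(r)\in\Dj$ for some possibly large $r$ (depending on the final $L$-run length $h$ in $W_1W_2$), using a case split on whether $W_1W_2$ contains $R$ and the ``bubbling'' \Cref{st:bublani_D1}; it then establishes a separate auxiliary claim \eqref{pr:aux_claim} showing that membership for any $r\ge3$ implies membership for $r=3$ (and that $L^s$ is a prefix). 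That two-step reduction is where $r\ge3$ comes from, and it is missing from your argument.

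The last two assertions inherit the same problem. You read off $\sigma(W')>\sigma(W)$ and the suffix $R^{-1}W$ from the expression $W_aL^{rs_Z}W_a^{-1}W_1^{-1}W$, but this is a formal matrix product, not a word factorization: $W_a^{-1}W_1^{-1}$ has negative entries and is not in $\Dj$, so you cannot count runs or identify suffixes termwise in that expression. The paper gets both conclusions from the structure of $W_3(3)$ established in the case split (that it begins with $L^s$, that $R^{-1}W\in\Dj$ is a suffix because $W$ begins with $R$ by \Cref{le:stejna_pismena}, and that \Cref{st:bublani_D1} forces an extra $L$-to-$R$ alternation in the second case). Without that analysis, ``the inserted $L$-run produces at least one run not present in $W$'' is unjustified.
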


\begin{proof}
The first part of the proof is very similar to the previous proof.
By \Cref{le:max_n_jednoho_pismene}, there exists an integer $t$ with $0 < t \leq n$ and $Q \in \DB{}$ such that
\begin{equation} \label{pr:nafu_w1}
\Tedge{M}{VL^{i+t}}{W_1}{Q}
\end{equation}
for some $W_1 \in \Dj$.

By \Cref{le:spadnu_do_LS}, there exists an integer $k$ with $k \leq n$ and $Z \in \LE{}$ such that
\begin{equation} \label{pr:nafu_w2}
\Twalk{Q}{L^k}{W_2}{Z}
\end{equation}
for some $W_2 \in \Dj$.

Let $r$ be an integer such that $rn \geq t+k$.
Let $W_3(r)$ be the matrix given by
\begin{equation} \label{pr:nafu_W3}
ZL^{rn-t-k}R^j = W_3(r)N.
\end{equation}
We now show that $W_3(r) \in \Dj$.
Notice that this is equivalent to the existence of the following walk: $\Twalk{Z}{L^{rn-t-k}R^j}{W_3(r)}{N}$.

To show $W_3(r) \in \Dj$, we first find $N^{-1}$ using $\Tedge{M}{VL^iR^j}{W}{N}$ as follows
\begin{equation} \label{pr:nafu_Ni}
N^{-1} = R^{-j}L^{-i}V^{-1}M^{-1}W.
\end{equation}

By \eqref{pr:nafu_w1} and \eqref{pr:nafu_w2}, we have $MVL^{i+t+k} = W_1W_2Z$, therefore
\begin{equation} \label{pr:nafu_W12i}
(W_1W_2)^{-1} = Z L^{-i-t-k} V^{-1} M^{-1}.
\end{equation}

Since $Z \in \LE{} \subseteq \LS{}$, we have by \Cref{le:LS_nacteni_n} that $\Twalk{Z}{L^n}{L^s}{Z}$ for some positive integer $s$.
We combine this fact with \eqref{pr:nafu_W3} and obtain
\[
W_3(r) = ZL^{rn-t-k}R^j N^{-1} = L^{rs} Z L^{-t-k} R^j N^{-1}.
\]
We continue by using \eqref{pr:nafu_Ni} and then \eqref{pr:nafu_W12i} to obtain
\begin{align} \label{eq:W_3}
W_3(r) = L^{rs} Z L^{-t-k} R^j N^{-1} & = L^{rs} Z L^{-t-k} R^j R^{-j} L^{-i}V^{-1}M^{-1}W \\ \notag
& = L^{rs} Z L^{-i-t-k}V^{-1}M^{-1}W \\ \notag
& = L^{rs} (W_1W_2)^{-1} W.
\end{align}

We prove now the following claim.
\begin{equation} \label{pr:aux_claim}
\text{ If $W_3(r) \in \Dj$ for some $r \geq 3$, then $W_3(3) \in \Dj$ and $L^s$ is a prefix of $W_3(3)$. }
\end{equation}
Indeed, since for $r \geq 3$ we have $rn > 2n \geq t+k$, $W_3(r) \in \Dj$ implies $\Twalk{Z}{L^{rn-t-k}R^j}{W_3(r)}{N}$.
Let $rn-t-k = q(r)n + p$ with $0 \leq p < n$, i.e., $p = (-t-k) \bmod n$.
Since $\Twalk{Z}{L^n}{L^s}{Z}$, it implies that the walk $\Twalk{Z}{L^{p}R^j}{L^{-q(r)s}W_3(r)}{N}$ exists and $L^{-q(r)s}W_3(r) \in \Dj$.
As $2n \geq t+k$, using \eqref{eq:W_3}, we find $W_3(2) = L^{q(2)s - q(r)s}W_3(r)$.
Since $0 \leq q(2) < q(r)$, we have $W_3(2) = L^{q(2)s - q(r)s}W_3(r) \in \Dj$.
Moreover, we have $q(3)-q(2) = 1$.
Therefore $W_3(3) = L^s W_3(2) \in \Dj$ and $L^s$ is a prefix of $W_3(3)$, and thus \eqref{pr:aux_claim} holds.

We have the two following cases:

\begin{enumerate}
\item  Assume that $W_1W_2$ does not contain $R$.
We may choose the integer $r$ large enough so that $L^{rs} (W_1W_2)^{-1} \in \Dj$, and thus $W_3(r) \in \Dj$ and $W$ is suffix of $W_3(r)$. Because $W$ has by \Cref{le:stejna_pismena} prefix $R$, then also $R^{-1}W$ is a suffix of $W_3(r)$ and therefore also of $W'$.
By \eqref{pr:aux_claim}, we have $W_3(3) \in \Dj$ and $L^s$ is its prefix.
As $R$ is a prefix of $W$, we conclude $\sigma(W_3(3)) = \sigma(W)+1$.
Since $W' = W_2W_3(3)$, we conclude $\sigma(W') > \sigma(W)$.

\item Assume that $W_1W_2$ contains $R$, i.e., $RL^h$ is its suffix for some $h \geq 0$.
Since by \Cref{le:stejna_pismena} the word $W_1$ starts with $L$, we may write $W_1W_2 = LW_4RL^h$ for some $W_4 \in \Dj$.
Using \Cref{st:bublani_D1}, we conclude that $L \left( LW_4R \right)^{-1} R \in \Dj$.
We choose $r$ such that $rs-1 \geq h$.
As $W$ starts with $R$ we have $R^{-1}W \in \Dj$.
We conclude that
\[
W_3(r) = L^{rs} (W_1W_2)^{-1} W = L^{rs-1} L \left( LW_4RL^h \right)^{-1} R R^{-1}W = L^{rs-1-h} L \left( LW_4R \right)^{-1} R R^{-1}W \in \Dj
\]
and $R^{-1}W$ is a suffix of $W_3(r)$ and therefore also of $W'$.

Therefore, by \eqref{pr:aux_claim}, $W_3(3) \in \Dj$ and it starts with $L^s$.
By \Cref{st:bublani_D1}, the word $L \left( LW_4R \right)^{-1} R$ starts with $R$.
It implies that $\sigma(L^{rs} (W_1W_2)^{-1} R) \geq 2$ and moreover, if $L^{rs} (W_1W_2)^{-1} R$ ends with $L$, then $\sigma(L^{rs} (W_1W_2)^{-1} R)\geq 3$.
Together with the facts that if $R^{-1}W$ starts with $R$, then $\sigma(R^{-1}W) = \sigma(W)$ and $\sigma(R^{-1}W) = \sigma(W) - 1$ otherwise, we conclude $\sigma(W_3(3)) > \sigma(W)$.
Since $W' = W_2W_3(3)$, we have $\sigma(W') > \sigma(W)$.
\end{enumerate}

The proof for $r=3$ is complete.
The general case $r \geq 3$ follows from the existence of the walk $\Twalk{Z}{L^n}{L^s}{Z}$.
\end{proof}

\begin{example}
For instance, for the edge $\Tedge{M}{L^2R}{RL^4}{N}$, where $M = \mat{6}{2}{2}{3}$ and $N = \mat{2}{1}{0}{7}$ in the transducer $\mathcal{T}_{14}$, we have $\Tedge{M}{L^4}{LR^2}{\Tedge{A_{14}}{L^{14}}{L}{A_{14}}}$ and $\Tedge{A_{14}}{L^{12}R}{RL^6}{N}$ with $A_{14} = \mat{14}{0}{0}{1}$. Therefore, using the notation from the last lemma, we have $i = 2, j = 1$, $t = 2$ and there is a walk $\Twalk{A_{14}}{L^{rn-2}R}{L^{r-1}RL^6}{M}$.
It means that $W = RL^4$ and $W' = L^{r-1}RL^6$ and so for all $r\geq 3$, $\sigma(W') = 3 > 2 = \sigma(W)$ and moreover, $R^{-1}W = L^4$ is a suffix of $W'$.
\end{example}

\begin{corollary} \label{coro:kappa_hrana}
Let $\Tedge{M}{VR^j}{W}{N}$ with $j > 0$. % and $V$ ending in $L$.
If $V$ is empty, then the walk
\begin{enumerate}[(a)]
\item $\Twalk{M}{L^{4n}R^j}{\widehat{W}}{N}$ with $\sigma(\widehat{W}) > \sigma(W)$ \label{it:kappa_hrana_prazdneV}
\end{enumerate}
exists.
If $V$ ends in $L$, then the walks
\begin{enumerate}[resume*]
  \item $\Twalk{M}{\kappa(V)R^j}{\widehat{W}}{N}$ with $\sigma(\widehat{W}) > \sigma(W)$, \label{it:kappa_hrana_nic}
  \item $\Twalk{M}{L^{4n}\kappa(V)R^j}{\widehat{W}_L}{N}$ with $\sigma(\widehat{W}_L) > \sigma(W)$, and \label{it:kappa_hrana_L}
  \item $\Twalk{M}{R^{4n}\kappa(V)R^j}{\widehat{W}_R}{N}$ with $\sigma(\widehat{W}_R) > \sigma(W)$ \label{it:kappa_hrana_R}
\end{enumerate}
exist.
\end{corollary}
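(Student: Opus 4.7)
The plan is to derive all four walks by iterated applications of Lemma \ref{le:nafouknuti_jedno} together with its $L \leftrightarrow R$-symmetric variant, which follows from Proposition \ref{prop:assoc_sym}. A first observation I will use is that the proof of Lemma \ref{le:nafouknuti_jedno} relies only on the matrix identity $MVL^iR^j = WN$ and on general properties of $\Dj$, so its conclusion extends verbatim from a single edge to any walk $\Twalk{M}{VL^iR^j}{W}{N}$; this lets me chain the lemma with itself after the first application has already turned the edge into a walk. Part (\ref{it:kappa_hrana_prazdneV}) is then immediate: apply the lemma to $\Tedge{M}{R^j}{W}{N}$ with $V = \emptyword$, $i = 0$, and $r = 4$, and concatenate the two pieces $\Tedge{M}{L^t}{W_1}{Q}$ and $\Twalk{Q}{L^{4n-t}R^j}{W'}{N}$ to obtain $\Twalk{M}{L^{4n}R^j}{W_1 W'}{N}$ with $\sigma(W_1 W') \geq \sigma(W') > \sigma(W)$.

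For part (\ref{it:kappa_hrana_nic}), I decompose $VR^j$ into alternating $L$- and $R$-runs. By Lemma \ref{le:max_n_jednoho_pismene} each run of $V$ has length in $[1,n]$, while the corresponding $\kappa$-value lies in $[4n,5n-1]$, so each required inflation factor is $r \in \{3, 4\}$, which safely meets the hypothesis $r \geq 3$ of Lemma \ref{le:nafouknuti_jedno}. I then process the runs of $V$ from right to left. At each step, the current rightmost unprocessed run plays the role of the last $L$-run (respectively the last $R$-run) before the already inflated trailing tail, matching the setup of Lemma \ref{le:nafouknuti_jedno} (respectively its symmetric counterpart). Each application strictly increases $\sigma$, so after all runs of $V$ are processed I obtain $\Twalk{M}{\kappa(V)R^j}{\widehat{W}}{N}$ with $\sigma(\widehat{W}) > \sigma(W)$.

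Parts (\ref{it:kappa_hrana_L}) and (\ref{it:kappa_hrana_R}) extend (\ref{it:kappa_hrana_nic}) by one further inflation at the leftmost end, split into two subcases by the first letter of $V$. For (\ref{it:kappa_hrana_L}), if $V$ starts with $L^{m_0}$, I simply pick inflation factor $r \in \{7,8\}$ for the leftmost $L$-run so that its new length equals $4n + m_0'$; if $V$ starts with $R$, I perform an additional application of Lemma \ref{le:nafouknuti_jedno} with $V = \emptyword$ and $i = 0$ applied to the leftmost sub-walk of the walk produced in (\ref{it:kappa_hrana_nic}), thereby introducing a fresh leading $L^{4n}$ run before the first $R$-run of $\kappa(V)$. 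Part (\ref{it:kappa_hrana_R}) is then the $L \leftrightarrow R$ mirror of (\ref{it:kappa_hrana_L}) via Proposition \ref{prop:assoc_sym}. The principal obstacle is justifying the iteration in (\ref{it:kappa_hrana_nic})--(\ref{it:kappa_hrana_R}): after the first inflation the object is a walk and no longer an edge, so I must verify that the hypotheses of Lemma \ref{le:nafouknuti_jedno}---in particular the trailing run structure and the matrix relation---continue to hold at each step and that $\sigma$ never decreases under successive inflations, the latter following from the structural information ``$W'$ has suffix $R^{-1}W$'' recorded in the lemma.
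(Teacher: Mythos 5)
Your plan follows the same skeleton as the paper's proof: iterate Lemma~\ref{le:nafouknuti_jedno} (and its $L\leftrightarrow R$ mirror via Proposition~\ref{prop:assoc_sym}) run by run from right to left, which is exactly what the paper's induction on $\sigma(V)$ does. Your adaptive choice of the inflation factor $r\in\{3,4\}$ (and $r\in\{7,8\}$ for part~(\ref{it:kappa_hrana_L})), tuned to the actual run length $i\leq n$ so that $i+rn$ lands directly in $\{4n,\ldots,5n-1\}$, is a legitimate alternative that spares you the cleanup applications of Lemma~\ref{le:vyfouknuti_n} which the paper uses with the fixed choice $r=4$ (resp.\ $r=8$).

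However, the ``first observation'' on which your whole chaining scheme rests is incorrect, and this is a genuine gap. You claim that Lemma~\ref{le:nafouknuti_jedno} ``relies only on the matrix identity $MVL^iR^j = WN$'' and hence ``extends verbatim'' to a walk $\Twalk{M}{VL^iR^j}{W}{N}$. It does not. The very first step of its proof invokes Lemma~\ref{le:max_n_jednoho_pismene} to produce the intermediate state $Q$ and the \emph{edge} $\Tedge{M}{VL^{i+t}}{W_1}{Q}$; this requires $MVL^i\in\RB{}$, which holds for an edge because $VL^i$ is a proper prefix of the input (Theorem~\ref{thm:Raney_edge}), but fails in general for a walk. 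A concrete counterexample in $\T[2]$: the walk $\Twalk{A_2}{L^2R}{LR^2}{A_2}$ composed of $\Tedge{A_2}{L^2}{L}{A_2}$ and $\Tedge{A_2}{R}{R^2}{A_2}$ has $V=\emptyword$, $i=2$, $j=1$, yet $A_2L^2 = \mat{2}{0}{2}{1}\not\in\RB[2]$, so the first step of the lemma's proof breaks. The ``moreover'' clause of the lemma even fails to typecheck on walks: by Lemma~\ref{le:stejna_pismena} the output $W$ of an \emph{edge} whose input ends in $R$ begins with $R$, so $R^{-1}W$ is well-defined, whereas the output of a walk begins with the last letter of the \emph{first} edge's input (here $L$, giving $W=LR^2$).

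The fix is precisely what the paper's inductive step does: never feed the resulting walk back into the lemma, but instead apply the induction hypothesis (the symmetric form of this corollary with $\sigma$ decreased by one) to the new, shorter \emph{edge} $\Tedge{M}{VL^{i+t}}{W_1}{Q}$ that the lemma itself produces, set aside the trailing walk $\Twalk{Q}{L^{rn-t}R^j}{W'}{N}$, and concatenate everything at the end. With that adjustment your right-to-left iteration becomes the paper's induction; as written, the chaining step is not justified. The same objection applies to the extra leftmost inflation you propose for (\ref{it:kappa_hrana_L}) and (\ref{it:kappa_hrana_R}) when $V$ starts with $R$: you apply Lemma~\ref{le:nafouknuti_jedno} ``to the leftmost sub-walk of the walk produced in (\ref{it:kappa_hrana_nic})'', which again presupposes the lemma acts on walks; the paper instead applies the symmetric version of the lemma to the single edge $\Tedge{M}{\cdot}{\cdot}{Q}$ remaining at the innermost level of the recursion.
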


\begin{proof}
We shall prove the existence of the walk~\ref{it:kappa_hrana_prazdneV} directly and existence of the other walks by induction on $\sigma(V)$.

Assume that $V = L^i$ with $i \geq 0$, i.e., $\sigma(V) \leq 1$.
Using \Cref{le:nafouknuti_jedno} with $r=4$, we obtain
\[
\Tedge{M}{L^{i+t}}{W_1}{Q} \quad \text{ and } \quad \Twalk{Q}{L^{4n-t}R^j}{W'}{N}.
\]
with $\sigma(W') > \sigma(W)$ for some $Q \in \DB{}$, $W_1, W' \in \Dj$ and an integer $t$ with $0 < t \leq n$.
Therefore, the walk $\Twalk{M}{L^{i+4n}R^j}{W_1W'}{N}$ exists.
Applying \Cref{le:vyfouknuti_n} the correct number of times, we obtain $\Twalk{M}{\kappa(L^i)R^j}{\widehat{W}}{N}$ with $\sigma(\widehat{W}) = \sigma(W_1W')\geq \sigma(W') > \sigma(W)$.
This proves existence of \ref{it:kappa_hrana_prazdneV} and \ref{it:kappa_hrana_nic} for $\sigma(V) =1 $.
Using the symmetric version of \Cref{le:nafouknuti_jedno} (using the symmetry of $L$ and $R$ given by \Cref{prop:assoc_sym}) on $\Tedge{M}{L^{i+t}}{W_1}{Q}$, we obtain that the walk
\[
\Twalk{M}{R^{4n}L^{i+t}}{W_2}{Q}
\]
exists.
Therefore, $\Twalk{M}{R^{4n}L^{4n+i}R^j}{W_2W'}{N}$ exists.
Considering again \Cref{le:vyfouknuti_n}, we prove \ref{it:kappa_hrana_R}.

To show the existence of \ref{it:kappa_hrana_L}, we proceed as in the case of the walk~\ref{it:kappa_hrana_nic} except for using \Cref{le:nafouknuti_jedno} with $r=8$ and factoring $L^{4n}$ in the input word of the obtained walk.

By \Cref{prop:assoc_sym}, the symmetric version of the claim for $\sigma(V) \leq 1$ holds.

Assume now the claim and its symmetric version hold for $\sigma(V') = k$ and let $V = V'L^i$ with $\sigma(V'L^i) = k+1$.
We apply \Cref{le:nafouknuti_jedno} on $\Tedge{M}{V'L^iR^j}{W}{N}$: there exist $Q \in \DB{}$, $W_1, W' \in \Dj$ and an integer $t$ with $0 < t \leq n$ such that
\[
\Tedge{M}{V'L^{i+t}}{W_1}{Q} \quad \text{ and } \quad \Twalk{Q}{L^{4n-t}R^j}{W'}{N}
\]
with $\sigma(W') > \sigma(W)$.
By the induction hypothesis and the symmetry of $L$ and $R$ (\Cref{prop:assoc_sym}) on $\Tedge{M}{V'L^{i+t}}{W_1}{Q}$, we obtain $\Twalk{M}{X\kappa(V')L^{i+t}}{W_1'}{Q}$ with $\sigma(W_1') > \sigma(W_1)$ and $X$ being empty, $L^n$, or $R^n$.
The situation is illustrated in \Cref{fig:nafukovani}.
Therefore,
\[
\Twalk{M}{X\kappa(V')L^{i+4n}R^j}{W_1'W'}{N}.
\]
Using \Cref{le:vyfouknuti_n}, we obtain $\Twalk{M}{X\kappa(V'L^i)R^j}{\widehat{W}}{N}$ with $\sigma(\widehat{W}) = \sigma(W_1'W') \geq \sigma(W') > \sigma(W)$.
\end{proof}

\begin{remark}
In fact, the mapping $\kappa$ could be defined such that it adjusts the length of each run between $3n$ and $(4n-1)$ and we could prove the same bound in \Cref{thm:main}.
Our experiments show that it might also be sufficient to adjust the length of all runs between $2n$ and $(3n-1)$.
However, we use the given definition of $\kappa$ since it simplifies the proofs without changing the result.
\end{remark}

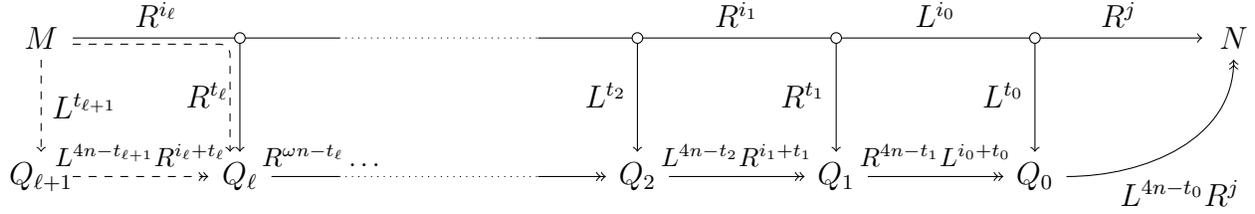
\begin{figure}
\centering
\begin{tikzpicture}[small/.style={fontscale=-1}]
  \matrix (m) [matrix of math nodes,row sep=3em,column sep=4.4em,minimum width=2em]
  {
     M  & \phantom{N}  &  \phantom{N} &  \phantom{N} & \phantom{N} & \phantom{N}  & N \\
     \phantom{Q}   &  Q_{\ell} & \phantom{Q} & Q_2 & Q_1  &  Q_0 &  \\};
% top
\path (m-1-6.center) edge[->] node[above]{$R^j$} (m-1-7);
\path (m-1-5.center) edge[-] node[above]{$L^{i_0}$} (m-1-6.center);
\path (m-1-4.center) edge[-] node[above]{$R^{i_1}$} (m-1-5.center);
% \path[path fading=fade up,draw=black] (m-1-3.center) -- (m-1-4.center);
\draw[] (m-1-2.center) edge[-] ($(m-1-2)!0.5!(m-1-3)$) ($(m-1-2)!0.5!(m-1-3)$) edge [dotted] ($(m-1-3)!0.5!(m-1-4)$) ($(m-1-3)!0.5!(m-1-4)$) edge[]  (m-1-4.center);
\path (m-1-1) edge[-] node[above]{$R^{i_{\ell}}$} (m-1-2.center);

%vertical
\path (m-1-6.center) edge[->] node[left]{$L^{t_0}$} (m-2-6);
\path (m-1-5.center) edge[->] node[left]{$R^{t_1}$} (m-2-5);
\path (m-1-4.center) edge[->] node[left]{$L^{t_2}$} (m-2-4);

\path (m-1-2.center) edge[->] node[left]{$R^{t_\ell}$} (m-2-2);

%bottom
\path (m-2-6) edge[->>,out=0,in=-90] node[below,yshift=-0.7em]{$L^{4n-t_0}R^j$} (m-1-7);
\path (m-2-5) edge[->>] node[above,small]{$R^{4n-t_1}L^{i_0+t_0}$} (m-2-6);
\path (m-2-4) edge[->>] node[above,small]{$L^{4n-t_2}R^{i_1+t_1}$} (m-2-5);
\draw[] (m-2-2) edge[-] node[above,small,xshift=0.5em]{$R^{\omega n - t_\ell}\ldots$} ($(m-2-2)!0.5!(m-2-3)$) ($(m-2-2)!0.5!(m-2-3)$) edge [dotted] ($(m-2-3)!0.5!(m-2-4)$) ($(m-2-3)!0.5!(m-2-4)$) edge[->>]  (m-2-4);

% VERZE 1 - bez Q_{l+1}
% \path (m-1-1) edge[->>,out=-90,in=180,dashed] node[below,yshift=-0.7em]{$L^{4n}R^{i_\ell+t_\ell}$} (m-2-2);

%VERZE 2 - s Q_{l+1}
\node at (m-2-1){$Q_{\ell+1}$};
\path (m-1-1) edge[->,dashed] node[right]{$L^{t_{\ell+1}}$} (m-2-1);
\path (m-2-1) edge[->>,dashed] node[above,small]{$L^{4n-t_{\ell+1}}R^{i_\ell+t_\ell}$} (m-2-2);

%dashed
% \path ([yshift=0.5em]m-1-1.south east) edge[->,dashed,out=0,in=90] ([xshift=0.7em]m-2-2.north west);
% \path ([yshift=0.5em]m-1-1.south east) .. controls (1,1) .. ([xshift=0.7em]m-2-2.north west);
\draw[rounded corners,dashed] ([yshift=0.5em]m-1-1.south east) -- ([xshift=0.7em,yshift=0.5em]m-1-2.south west) [->] --  ([xshift=0.7em]m-2-2.north west);

% \draw[dashed] ([xshift=0em]m-2-2.south east) -- ([xshift=0em]m-2-6.south east);
% \draw[transform canvas={yshift=-0.3em},dashed] (m-2-2) -- ($(m-2-2)!0.5!(m-2-3)$) [dotted]--  ($(m-2-3)!0.5!(m-2-4)$) [dashed,->>] -- (m-2-4);

%points on top line
\filldraw[fill=white] (m-1-6) circle (2pt);
\filldraw[fill=white] (m-1-5) circle (2pt);
\filldraw[fill=white] (m-1-4) circle (2pt);
\filldraw[fill=white] (m-1-2) circle (2pt);

  % \path[-stealth]
  %   (m-1-1) edge node [left] {$\mathcal{B}_X$} (m-2-1)
  %           edge [double] node [below] {$\mathcal{B}_t$} (m-1-2)
  %   (m-2-1.east|-m-2-2) edge node [below] {$\mathcal{B}_T$}
  %           node [above] {$\exists$} (m-2-2)
  %   (m-1-2) edge node [right] {$\mathcal{B}_T$} (m-2-2)
  %           edge [dashed,-] (m-2-1);
\end{tikzpicture}
\caption{
An illustration of the idea in the first part of the proof of \Cref{coro:kappa_hrana}.
The figure should be read as follows: the top line contains the original edge $\Tedge{M}{VR^j}{W}{N}$ with $\sigma(V) = \ell+1$ even, and we proceed from right to left by constructing a new walk going through the vertices $Q$ given by \Cref{le:nafouknuti_jedno}.
We start by finding $Q_0$ and find a walk with the length of the penultimate run in the input word modified.
We continue from right to left until we reach the first run, indexed by $\ell$ in the figure, in the current input word.
The last steps depends on which item is being shown: \Cref{it:kappa_hrana_L} is depicted by the bottom dashed path on the left with $\omega=4$,
\Cref{it:kappa_hrana_nic} is the top dashed path on the left with $\omega=4$, and finally \Cref{it:kappa_hrana_R} is the top dashed path on the left with $\omega=8$.
The 3 cases correspond to $X$ being $L^n$, empty or $R^n$, respectively.
A figure for $\sigma(V)$ odd is analogous.
}
\label{fig:nafukovani}
\end{figure}

The following lemma describes the paths in the transducer $\T{}$ that are taken when the greatest prolongation occurs.

\begin{lemma} \label{le:LS_n_do_RS_n}
Let $M = \mat{t_1}{0}{u_1}{m_1} \in \LE{}$ and $i \in \{\LclassMax{M}, \LclassMax{M} + 1, \dots, 2 \LclassMax{M}-1\}$.
\begin{enumerate}[(1)]
  \item There exist one and only one matrix $N_{L,M,i} \in \RE{}$ such that $\Twalk{M}{L^iR^{j}}{W}{N_{L,M,i}}$ for some $j \in \N$ and $W \in \Dj$.
  Moreover, there is exactly one $j_{L,M,i} \in \{3n - \RclassMax{N} + 1, 3n - \RclassMax{N} + 2, \dots, 3n\}$ such that the walk from $M$ to $N_{L,M,i}$ with the input word $L^iR^{j_{L,M,i}}$ and an output word $W_{L,M,i}$ exists.
  $N_{L,M,i}, j_{L,M,i}$ and $W_{L,M,i}$ depend only on $M$ and $i$. \label{le:LS_n_do_RS_n:i1}
  \item The word $W_{L,M,i}$ starts with $L$, ends with $R$ and satisfies
\[
\sigma(W_{L,M,i}) = 2\left\lfloor \frac{\xi_{L,M,i}}{2} \right\rfloor +2,
\]
where $\xi_{L,M,i} = \xi({im_1+u_1}, {t_1})$. \label{le:LS_n_do_RS_n:i2}
\end{enumerate}
\end{lemma}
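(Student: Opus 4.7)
For part (1), the approach is to compose the $L$-cycle at $M$ with an $R$-cycle at an $\RE{}$-state. Since $M \in \LE{} \subseteq \LS{}$, \Cref{le:char_toceni_na_L} together with \Cref{def:nu} yields a walk $\Twalk{M}{L^{\LclassMax{M}}}{L^{s}}{M}$ for some $s > 0$ consisting of pure-$L$ edges. For $i \in \{\LclassMax{M},\ldots,2\LclassMax{M}-1\}$, write $i = \LclassMax{M} + r$ with $0 \le r < \LclassMax{M}$: after one pass through the $L$-cycle we are back at $M$ with $L^{r}R^{j}$ still to read. Feeding this further input into the transducer processes at most $n$ additional $L$'s (possibly via a mixed edge crossing the $L/R$ boundary) and then at most $n$ $R$'s before landing in a state $N_{L,M,i} \in \RE{}$, by the $R$-symmetric version of \Cref{le:spadnu_do_LS} obtained via \Cref{prop:assoc_sym}; uniqueness of $N_{L,M,i}$ follows from the $R$-symmetric version of \Cref{cor:jednoznacnost_LE}. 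This provides an initial walk $\Twalk{M}{L^{i}R^{j'}}{\bullet}{N_{L,M,i}}$ with $j'$ bounded by a constant depending only on $n$ (in particular $j' \le 2n$). Precomposing the tail with $m$ copies of the $R$-cycle at $N_{L,M,i}$ of length $\RclassMax{N_{L,M,i}}$ shifts $j$ by $m\RclassMax{N_{L,M,i}}$. Since the window $\{3n-\RclassMax{N_{L,M,i}}+1,\ldots,3n\}$ has length exactly $\RclassMax{N_{L,M,i}} \le n$ and lies above $j'$, exactly one value of $m$ places the shifted $j$ in this window, determining $j_{L,M,i}$ and $W_{L,M,i}$ uniquely.

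For part (2), the first edge of $\Twalk{M}{L^{i}R^{j_{L,M,i}}}{W_{L,M,i}}{N_{L,M,i}}$ must be the opening pure-$L$ edge $\Tedge{M}{L^{a_1}}{L^{s_1}}{X_1}$ of the $L$-cycle at $M$, because $L^{a_1}$ with $a_1 \le \LclassMax{M} \le i$ is a prefix of the total input and outgoing edges at $M$ form a prefix code; hence by \Cref{le:stejna_pismena}, $W_{L,M,i}$ begins with $L$. Since $j_{L,M,i} > 2n$ while no edge input contains more than $n$ consecutive $R$'s by \Cref{le:max_n_jednoho_pismene}, the tail of the walk must include a full iteration of the pure-$R$ cycle at $N_{L,M,i}$, forcing the last edge to have pure-$R$ input; the transpose-symmetric instance of \Cref{le:stejna_pismena} via \Cref{prop:assoc_sym} then gives that $W_{L,M,i}$ ends with $R$. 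Consequently \Cref{le:pocet_LRzmen} applies to $W_{L,M,i}$. For the explicit run count, compute $W_{L,M,i} = ML^{i}R^{j_{L,M,i}}N_{L,M,i}^{-1}$: writing $M = \mat{t_1}{0}{u_1}{m_1}$ and $N_{L,M,i} = \mat{a_2}{b_2}{0}{m_2}$ (so $a_2 m_2 = n$), direct matrix multiplication yields that the first column of $W_{L,M,i}$ equals $\tfrac{m_2}{n}\,(t_1,\, im_1+u_1)^{T}$, a positive rational scalar multiple of $(t_1, im_1+u_1)^{T}$. The number of Euclidean steps is invariant under such scaling between integer pairs, so $\xi$ of the first column of $W_{L,M,i}$ equals $\xi(t_1, im_1+u_1) = \xi(im_1+u_1, t_1) = \xi_{L,M,i}$, and \Cref{le:pocet_LRzmen} delivers $\sigma(W_{L,M,i}) = 2\lfloor \xi_{L,M,i}/2\rfloor + 2$.

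The main obstacle, I expect, will be to verify rigorously that the input $L^{i}R^{j_{L,M,i}}$ decomposes cleanly as a concatenation of edge inputs of $\T{}$ rather than leaving unfinished edge progress at the $L/R$ interface. The delicate case is when $i$ is not itself a prefix sum of pure-$L$ edge lengths of the cycle at $M$, so that the edge straddling the $L/R$ boundary is a mixed edge whose input must exactly match the boundary. The choice of window $\{3n-\RclassMax{N_{L,M,i}}+1,\ldots,3n\}$, well above $2n$, is calibrated precisely to provide enough $R$'s both to complete this possibly mixed edge and to traverse the $R$-cycle at $N_{L,M,i}$ enough times to pin down a unique value of $j_{L,M,i}$.
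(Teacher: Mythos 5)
Your proposal follows essentially the same route as the paper's proof: for part (1), bound the number of $R$'s needed to reach $\RE{}$ by $2n$ using \Cref{le:max_n_jednoho_pismene} and the symmetric \Cref{le:spadnu_do_LS}, then pick the unique shifted $j$ in the length-$\RclassMax{N}$ window above $2n$; for part (2), compute $W_{L,M,i} = ML^{i}R^{j_{L,M,i}}N_{L,M,i}^{-1}$ and read off the first column as a scalar multiple of $(t_1,\, im_1+u_1)^T$ before invoking \Cref{le:pocet_LRzmen}. The ``obstacle'' you flag at the end is not really one: the outgoing edges form a prefix code, so $L^i R^j$ always parses into edges once $j$ is chosen to land on a state boundary, which the window guarantees.

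One small slip: to conclude that $W_{L,M,i}$ ends with $R$, you invoke a ``transpose-symmetric instance of \Cref{le:stejna_pismena},'' but \Cref{le:stejna_pismena} and its transpose both give the \emph{same} relation (last letter of input $=$ first letter of output); there is no transpose variant giving input-first $=$ output-last, as the example $\Tedge{M}{LR}{R}{N}$ shows. The conclusion is nonetheless correct for a different reason, which is what the paper uses: since $j_{L,M,i} > 2n$, the walk ends by traversing the $R$-cycle $\Twalk{N}{R^{\RclassMax{N}}}{R^s}{N}$ at least once, and by the $R$-symmetric \Cref{le:char_toceni_na_L} the \emph{output} of this cycle is a power of $R$, hence every edge in it (including the last edge of the whole walk) has pure-$R$ output. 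Similarly, the prefix of the walk is the pure-$L$ cycle at $M$ whose output is a power of $L$, giving the leading $L$. Everything else in your argument matches the paper.
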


\begin{proof}
Assume that we are on a walk which starts in $M$, we input $L^i$, and we start inputting a run of $R$'s.
\Cref{le:max_n_jednoho_pismene,le:spadnu_do_LS} imply that after inputting at most $2n$ $R$'s we reach a state $N \in \RE{}$.
Moreover, the matrix $N$ is unique and depends only on $M$ and $i$ (and the fact that the walk started by reading $L$'s).
By \Cref{def:nu}, there is exactly one $j \in \{3n - \RclassMax{N} + 1, 3n - \RclassMax{N} + 2, \dots, 3n\}$ such that $\Twalk{M}{L^iR^{j}}{W}{N}$, depending only on $M$ and $i$.
Therefore, there is also exactly one word $W$, and it depends only on $M$ and $i$, which concludes the proof of \Cref{le:LS_n_do_RS_n:i1}.

Let $N_{L,M,i} = \mat{t_2}{u_2}{0}{m_2}$.
We have
\[
W_{L,M,i} = ML^iR^{j_{L,M,i}}N_{L,M,i}^{-1} = \mat{\frac{t_1}{t_2}}{f}{\frac{im_1+u_1}{t_2}}{e}.
\]
for some $e,f \in \N$.
Using $W_{L,M,i} \in \Dj$, we conclude that $\frac{im_1+u_1}{t_2}, \frac{t_1}{t_2} \in \N$ and $\gcd{ (\frac{im_1+u_1}{t_2}, \frac{t_1}{t_2})} = 1$.
Therefore, $t_2 = \gcd{ ((im_1 + u_1),t_1)}$.
As $i \geq \LclassMax{M}$, the walk $\Twalk{M}{L^iR^{j_{L,M,i}}}{W_{L,M,i}}{N_{L,M,i}}$ starts with the walk $\Twalk{M}{L^{\nu}}{L^t}{M}$ with $\nu = \LclassMax{M}$ and some $t>0$, and the walk ends with the walk $\Twalk{N}{R^n}{R^s}{N}$.
Thus, the output word of the first walk is a power of $L$, and $L$ is the first letter of $W_{L,M,i}$ and the output word of the last walk is a power of $R$, and $R$ is the last letter of $W_{L,M,i}$.

By \Cref{le:pocet_LRzmen}, $\sigma(W_{L,M,i}) =  2 \left\lfloor \frac{\xi \left(\frac{im_1+u_1}{t_2}, \frac{t_1}{t_2}\right)}{2} \right\rfloor + 2$.
By definition of $\xi$, we have $\xi(\frac{im_1+u_1}{t_2}, \frac{t_1}{t_2}) = \xi(im_1+u_1,t_1) = \xi_{L,M,i}$.
\end{proof}

In what follows, the notation introduced by the last claim is used.
We also use the symmetric version of this notation in the following sense.
The symmetric version of~\Cref{le:LS_n_do_RS_n} holds, and given $M \in \RE{}$ and $i \in \{\RclassMax{M}, \RclassMax{M} + 1, \dots, 2 \RclassMax{M}-1\}$, we find $N_{R,M,i} \in \LE{}$, $j_{R,M,i} \in \{3n - \LclassMax{N_{R,M,i}} + 1, 3n - \LclassMax{N_{R,M,i}} + 2, \dots, 3n\}$ and $W_{R,M,i} \in \Dj$.
The $L$, resp. $R$, in the subscript is needed for the case $M \in \LS{} \cap \RS{} \neq \emptyset$.
In general, we may have $j_{L,M,i} \neq j_{R,M,i}$.
Similarly, we also use the notation $\xi_{R,M,i}$.

\subsection{Closed walks}

In this section, we return to the computation of the given  M\"{o}bius transformation for a periodic input word.
Clearly, the computation in the transducer $\T{}$ ends in a repeating loop, i.e., we end up with some closed walk $\Twalk{M}{V}{W}{M}$.
We start with a general lemma that deals with the case when this closed walk is symmetric in the following sense.
If the closed walk $\Twalk{M}{V}{W}{M}$ can be decomposed into two parts where the first part is $\Twalk{M}{V_1}{W_1}{\assoc{M}}$ and the second part is $\Twalk{\assoc{M}}{\assoc{V_1}}{W_2}{M}$, then we say that the closed walk is \emph{symmetric}.
Note that this implies that $V = V_1\assoc{V_1}$, and by \Cref{prop:assoc_sym} it also implies $W_2 = \assoc{W_1}$, which is stated as the following lemma.

\begin{lemma} \label{le:sym_dvojstav}
If $\Twalk{M}{V}{W}{M}$ is a symmetric closed walk, then $W = W_0\assoc{W_0}$ for some $W_0$.
\end{lemma}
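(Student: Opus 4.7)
The plan is to extract a symmetric structure on the output by applying \Cref{prop:assoc_sym} to the first half of the walk and then matching it against the second half via uniqueness of outputs in $\T{}$.

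By the definition of a symmetric closed walk, I would first decompose the given closed walk as
\[
\Twalk{M}{V}{W}{M} \;=\; \Twalk{M}{V_1}{W_1}{\Twalk{\assoc{M}}{\assoc{V_1}}{W_2}{M}},
\]
so in particular $W = W_1 W_2$. My candidate for $W_0$ will be $W_1$; it therefore suffices to prove $W_2 = \assoc{W_1}$.

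Next, I would apply \Cref{prop:assoc_sym} edge-by-edge to the first sub-walk $\Twalk{M}{V_1}{W_1}{\assoc{M}}$. Concatenating the resulting transitions (and using $\assoc{\assoc{M}} = M$), this yields a walk
\[
\Twalk{\assoc{M}}{\assoc{V_1}}{\assoc{W_1}}{M}.
\]
So we now have two walks from $\assoc{M}$ to $M$ with the same input label $\assoc{V_1}$, namely the original second half producing $W_2$ and the associated copy of the first half producing $\assoc{W_1}$.

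Finally, I would invoke determinism of $\T{}$: by \Cref{thm:Raney_edge}, given a state and an admissible input word, the outgoing edge (hence its output word and endpoint) is uniquely determined, and this extends inductively to walks. Consequently $W_2 = \assoc{W_1}$, and setting $W_0 = W_1$ gives $W = W_0 \assoc{W_0}$. The only subtlety is checking that the transducer really is deterministic in the required sense, but this is an immediate consequence of \Cref{thm:Raney_edge} (unique $W$ and $N$ for each admissible $V$) together with \Cref{le:vstupni_slova} ensuring the input labels of outgoing edges form a prefix code.
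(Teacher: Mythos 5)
Your proposal is correct and matches the paper's argument: the paper gives no formal proof environment for this lemma but justifies it in the preceding sentence by citing \Cref{prop:assoc_sym}, and your proof simply fills in the details of that citation (applying \Cref{prop:assoc_sym} edge-by-edge to the first half, then invoking determinism of $\T{}$ via the prefix-code property to match the resulting walk against the second half).
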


Note that symmetricity of the closed walk $\Twalk{M}{V}{W}{M}$ is not equivalent to $V = V_1\assoc{V_1}$ and $W = W_1\assoc{W_1}$.

We introduce the following notation, which is due to the fact that we shall require to change the starting vertex of a closed walk.
Let $V \in \{L,R\}^*$.
We set $\tau(V)$ to be the set of all conjugate words of $V$.
A word $W$ is \emph{conjugate} to a word $V$ if $V = V_1V_2$ and $W = V_2V_1$, i.e., $V$ is cyclic shift of $W$.
Furthermore, we set
\[
\sigmaC(V) = \min \left\{ \sigma(V') \colon V' \in \tau(V) \right\},
\]
i.e., except for the case $\sigma(V) = 1$, the mapping $\sigmaC$ counts the number of runs of a conjugate word of $V$ which starts and ends in a distinct letter, that is $\sigmaC(V) = 2 \left \lfloor \frac{\sigma(V)}{2} \right \rfloor$.
Note that if $\sigma(V) \geq \sigma(W)$, then $\sigmaC(V) \geq \sigmaC(W)$.

Let $W \in \Dj$ be non-empty. We set
\[
\tau_\kappa(W) = \tau(\kappa(W')) \quad \text{ where } W' \in \tau(W), \sigma(W')=\sigmaC(W).
\]
The mapping $\tau_\kappa$ defines the transformation of the input word which yields the upper bound in \Cref{thm:main}.
The following theorem expresses its key role.

\begin{restatable}{theorem}{restatableVyfukovani}
\label{co:vyfukovani_ze_symetrickeho}
Let $\Twalk{M}{V}{W}{M}$.
There exists $\widehat{V} \in \tau_\kappa(V)$ such that $\Twalk{\widehat{M}}{\widehat{V}}{\widehat{W}}{\widehat{M}}$ for some $\widehat{M} \in \DB{}$ with $\sigmaC(\widehat{W}) \geq \sigmaC(W)$.
Moreover,
\begin{enumerate}[(a)]
\item \label{it:1)vyfukovani}
if $V = V_1^{m_1}$ for some $V_1, m_1 \geq 2$ and $\Twalk{\widehat{M}}{\widehat{V}}{\widehat{W}}{\widehat{M}} = \left( \Twalk{\widehat{M}}{\widehat{V_1}}{\widehat{W_1}}{\widehat{M}} \right)^{m_1}$ where $\widehat{W_1}^{m_1} = \widehat{W}$ and $\widehat{V_1}^{m_1} = \widehat{V}$, then $\Twalk{M}{V}{W}{M}=(\Twalk{M}{V_2}{W_2}{M})^{m_2}$ for some $m_2 \geq 2$ and $W_2^{m_2} = W$ and $V_2^{m_2} = V$;
\item \label{it:2)vyfukovani}
if $V = V_1\assoc{V_1}$ for some $V_1$ and $\Twalk{\widehat{M}}{\widehat{V}}{\widehat{W}}{\widehat{M}}$ is symmetric, then $\Twalk{M}{V}{W}{M}$ is symmetric.
\end{enumerate}
\end{restatable}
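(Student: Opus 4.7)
My plan is to realise $\widehat{V}$ by two moves on the closed walk $\Twalk{M}{V}{W}{M}$: a cyclic shift that converts $V$ to a conjugate $V'$ attaining $\sigma(V') = \sigma_c(V)$, followed by a run-length normalisation that replaces every run of $V'$ by its $\kappa$-image using \Cref{coro:kappa_hrana}. Items (a) and (b) then follow by checking that both moves respect the power structure and the associated-matrix involution, respectively.

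For Stage~1, I decompose $V = V_1 V_2$ so that $V' := V_2 V_1$ has $\sigma(V') = \sigma_c(V)$ (the case $\sigma(V) \leq 1$ is handled directly via \Cref{le:char_toceni_na_L}). Re-starting the closed walk at the state $M'$ reached after reading $V_1$ gives $\Twalk{M'}{V'}{W'}{M'}$ with $W'$ a conjugate of $W$, hence $\sigma_c(W') = \sigma_c(W)$. Assuming WLOG, by \Cref{prop:assoc_sym}, that $V'$ starts with $L$ and ends with $R$, for Stage~2 I write $V' = U R^{j_m}$ with $U$ ending in $L$ and iterate item~\ref{it:kappa_hrana_nic} of \Cref{coro:kappa_hrana} on the $k$-fold walk $\Twalk{M'}{V'^k}{W'^k}{M'}$, interleaving these applications with cyclic shifts of the current closed walk so that each still-unnormalised run, in turn, appears as a suffix $R^j$ admitting a fresh application of item~\ref{it:kappa_hrana_nic}. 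Using the idempotence of $\kappa$ on already-normalised runs, and invoking \Cref{le:vyfouknuti_n} to deflate any run that overshoots $5n-1$, after finitely many steps I obtain a closed walk $\Twalk{M^*}{\kappa(V')^k}{\widehat{W}^{\star}}{M^*}$. A pigeonhole on the state trajectory along this cycle then extracts a closed sub-walk $\Twalk{\widehat{M}}{\widehat{V}}{\widehat{W}}{\widehat{M}}$ whose input $\widehat{V}$ is a conjugate of $\kappa(V')$, i.e.\ $\widehat{V} \in \tau_\kappa(V)$; since every step either strictly increased $\sigma$ of the output or preserved it, the inequality $\sigma_c(\widehat{W}) \geq \sigma_c(W)$ follows.

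For item~(a), when $V = V_1^{m_1}$ with $m_1 \geq 2$, both moves of the construction respect the power structure: conjugation sends a power to a (possibly conjugated) power, and $\kappa$ acts run-wise so it commutes with raising to the $m_1$-th power up to boundary-run mergings that are excluded by our choice of $V'$. Consequently the $m_1$-fold decomposition of the modified walk translates, via the uniqueness of edge labels in $\T{}$ (\Cref{thm:Raney_edge}), into an $m_2$-fold decomposition of the original walk for some $m_2 \geq 2$ with $V_2^{m_2} = V$ and $W_2^{m_2} = W$. For item~(b), symmetry of a closed walk asserts that its midpoint state is the associated matrix of its endpoint; \Cref{prop:assoc_sym} shows that the associated-matrix involution is a graph automorphism of $\T{}$ that commutes with cyclic shift and with the run-wise $\kappa$, so symmetry of the modified walk pulls back to symmetry of the original walk along the inverse of the construction.

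The main obstacle I foresee is the bookkeeping in Stage~2: a careful induction is required to certify that (i)~every run of $V'$ is ultimately normalised, (ii)~the modified walk genuinely closes at a single vertex $\widehat{M} \in \DB$ rather than merely at some multiple-period repeat of an iterated cycle, and (iii)~the number of runs of the output never shrinks across the successive rewrites. A natural organising principle is an induction on the number of still-unnormalised runs of the current input, with the inductive step consisting of one application of item~\ref{it:kappa_hrana_nic} of \Cref{coro:kappa_hrana} followed by one cyclic shift to expose the next run; \Cref{le:vyfouknuti_n} is then invoked at the end to trim any runs that overshoot $\{4n, \dots, 5n-1\}$.
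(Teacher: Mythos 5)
Your Stage 1 and the overall plan of Stage 2 (normalise run lengths via \Cref{coro:kappa_hrana}, then trim via \Cref{le:vyfouknuti_n}, then shift the start) are in the same spirit as the paper's construction, but the proposal has two genuine gaps.

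First, in Stage 2 you apply item~\ref{it:kappa_hrana_nic} of \Cref{coro:kappa_hrana} to the walk-level decomposition $V' = UR^{j_m}$, but that corollary is a statement about a \emph{single edge} $\Tedge{M}{VR^j}{W}{N}$ of $\T{}$, not about the concatenated input of a whole closed walk. The correct discipline is the one the paper uses: list the edges $p_1,\dots,p_g$ of the closed walk and replace each edge $p_i$ by a new walk $\widehat p_i$ with the \emph{same} start and end states, choosing among items~\ref{it:kappa_hrana_prazdneV}--\ref{it:kappa_hrana_R} of \Cref{coro:kappa_hrana} (or a plain copy) according to whether $U_i$ is empty and whether its first letter agrees with the last letter of $p_{i-1}$'s input. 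You use only item~\ref{it:kappa_hrana_nic}, which cannot handle the cases where a prefix $L^{4n}$ or $R^{4n}$ must be injected to glue runs across edge boundaries, nor the copy case where $U_i$ is empty and $Q_i = Q_{i-1}$. Because each $\widehat p_i$ keeps $p_i$'s endpoints, the modified sequence automatically closes at the original $M$; your ``$k$-fold walk plus pigeonhole'' device is both unmotivated (there is nothing that fails to close) and unable, as stated, to guarantee the final input lies in $\tau_\kappa(V)$ rather than merely in $\tau_\kappa(V^k)$.

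Second, and more seriously, your treatment of items~(\ref{it:1)vyfukovani}) and~(\ref{it:2)vyfukovani}) is not a proof. The map $\C \mapsto \widehat\C$ is many-to-one ($\kappa$ collapses run lengths mod $n$, and \Cref{le:vyfouknuti_n} forgets information), so there is no ``inverse of the construction'' along which to pull symmetry back, and ``$\kappa$ commutes with raising to the $m_1$-th power'' does not address the actual difficulty. The hypothesis $V = V_1^{m_1}$ makes the \emph{input word} a power, but the closed walk $\C$ may pass through up to $m_1$ distinct configuration cuts at the $V_1$-boundaries before returning to $M$; one must show that a decomposition of $\widehat\C$ forces at least two of those cuts in $\C$ to coincide. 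The paper establishes this by locating a modified edge $\widehat p_i$, using the assumed decomposition of $\widehat\C$ to find its partner $\widehat p_q$, and then tracing back to the originating edges in $\C$ through the case analysis (cases~\ref{it:A)vyfukovani} and~\ref{it:co:vyfukovani_ze_symetrickeho_spor}, the latter with a nested cascade~\ref{it:B)vyfukovani}/\ref{it:C)vyfukovani} and a termination argument based on \Cref{le:hrany_v_cyklu}, \Cref{prop:hrany_do_P2}, and related lemmas). Invoking ``uniqueness of edge labels'' from \Cref{thm:Raney_edge} does not replace that argument: determinism of $\T{}$ only tells you that $\C$'s period $d$ divides $m_1$; it does not rule out $d = m_1$. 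None of this bookkeeping appears in your proposal, so the proof of items (a) and (b) is missing its core.
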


\subsubsection{Proof of \Cref{co:vyfukovani_ze_symetrickeho}}
\label{subsec:kappa_walks}

Several technical lemmas concerning possible forms of edges in the transducer $\T{}$ follow.
These lemmas are used for the proof of \Cref{co:vyfukovani_ze_symetrickeho} given at the end of this subsection.

\begin{lemma} \label{le:vstup_V_1}
Let $\Tedge{M}{V}{W}{P}$, where $M,P \in \DB{}$, $V, W \in \Dj$.
\begin{enumerate}
  \item \label{le:vstup_RLk_1}
If $V = \widehat{V}RL^{j}$ for some $ \widehat{V} \in \Dj$ and $j\geq 2$, then
\[
W = LR^{i} \quad \text{ and } \quad i \geq j-1.
\]
\item \label{le:vstup_LRjL_1}
If $V = \widehat{V}LR^{j}L$ for some $ \widehat{V} \in \Dj$ and $j\geq 1$, then
\[
W = LR^{i} \quad \text{ and } \quad i \geq 1.
\]
\item \label{le:vstup_L_1}
IF $V = L^\ell$ with $\ell \geq 1$, then
\[
W = LR^s \quad \text{ or } \quad W = L^t
\]
where $s \geq 0, t \geq 2$. Moreover, if $W = L^t$, then $\ell = 1$.
\end{enumerate}
\end{lemma}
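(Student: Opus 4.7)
Plan.

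In all three items, Lemma~\ref{le:stejna_pismena} forces $W$ to start with $L$, since $V$ ends in $L$ in every case. The three items are then handled by direct matrix computation, but items~1 and~2 share a common strategy.

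For items~1 and~2, I introduce the intermediate matrix $M_0 \in \RB{}$ reached after reading every letter of $V$ except its final run of $L$'s, so that $MV = M_0 L^{j'}$ with $j' = j$ in item~1 and $j' = 1$ in item~2. The key observation is that the path leading to $M_0$ constrains its shape. Writing $M_0 = \mat{\alpha}{\beta}{\gamma}{\delta}$: in item~1, $M_0 R^{-1} = M\widehat{V}$ has nonnegative integer entries, giving $\beta \geq \alpha$ and $\delta \geq \gamma$; in item~2, both $M_0 R^{-j}$ and $M_0 R^{-j} L^{-1}$ have nonnegative entries, giving $j\alpha \leq \beta \leq (j+1)\alpha$ and $j\gamma \leq \delta \leq (j+1)\gamma$. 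Combined with $M_0 \in \RB{}$ (so $\alpha > \gamma$ and $\delta > \beta$) and the hypothesis on $j$, one easily deduces $\delta < 2\beta$ in both items.

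From $\delta < 2\beta$, a direct computation of $L^{-1} M_0 L^{j'} = \mat{\alpha + j'\beta}{\beta}{(\gamma-\alpha) + j'(\delta-\beta)}{\delta - \beta}$ shows that this matrix does not lie in $\DB{}$ (its $(2,2)$-entry $\delta-\beta$ does not exceed its $(1,2)$-entry $\beta$), so $W \neq L$; likewise $L^{-2} M_0 L^{j'}$ has negative $(2,2)$-entry $\delta - 2\beta$, ruling out any $W$ starting with $L^2$. Hence the second letter of $W$ must be $R$. Extracting $R$'s greedily, the $\DB{}$-conditions on the resulting matrix uniquely pin down $i = \lfloor \beta/(\delta-\beta) \rfloor$ and show that no further letter can be extracted, giving $W = LR^i$. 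For the lower bound on $i$: in item~1, $M_0 L^{j-1} \in \RB{}$ gives $(j-1)(\delta-\beta) < \alpha - \gamma \leq \alpha \leq \beta$, so $j-1 < \beta/(\delta-\beta)$ and thus $i \geq j-1$; in item~2, $\delta < 2\beta$ yields $\beta/(\delta-\beta) > 1$, whence $i \geq 1$.

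For item~3, $V = L^\ell$ carries no extra prefix constraint. Direct computation gives $M L^\ell = \mat{a + \ell b}{b}{c + \ell d}{d}$, and the analysis splits on whether $W$ consists only of $L$'s. If $W = L^t$, then $L^{-t} M L^\ell = \mat{a + \ell b}{b}{c + \ell d - t(a + \ell b)}{d - tb}$ must lie in $\DB{}$; combining the $\DB{}$-conditions with $M L^{\ell-1} \in \RB{}$ (which gives $a - c > (\ell-1)(d - b)$) forces $\ell = 1$ whenever $t \geq 2$, since for $\ell \geq 2$ the inequalities $c+\ell d \geq 2(a+\ell b)$ and $c + \ell d < a + \ell b + (d-b)$ are incompatible. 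Otherwise $W$ contains an $R$; an analogous computation of the second extraction step shows that the first run of $L$'s in $W$ has length exactly $1$, and greedy $R$-extraction then closes, yielding $W = LR^s$ for some $s \geq 0$. The main obstacle throughout is verifying the full shape of $W$---not just its first letter---which requires careful tracking of the $\DB{}$-conditions after each extraction and systematic use of the structural constraints on $M_0$ or $M$ inherited from the input's final runs.
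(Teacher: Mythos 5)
Your proposal follows the same underlying strategy as the paper --- direct matrix computation and inequality chasing, with \Cref{le:stejna_pismena} forcing $W$ to start with $L$ --- but reorganizes it via a useful reparametrization. Where the paper sets $M\widehat{V} = \mat{a}{b}{c}{d}$ and carries this through separate chains of inequalities for items~1 and~2, you extract the intermediate state $M_0 \in \RB{}$ reached after reading all of $V$ except its final $L$-run, and observe that the proper-prefix conditions condense in both items into the single inequality $\delta < 2\beta$ for $M_0 = \mat{\alpha}{\beta}{\gamma}{\delta}$. This unifies items~1 and~2 neatly and makes ruling out the prefix $L^2$ a one-line observation (the $(2,2)$-entry of $L^{-2}M_0L^{j'}$ equals $\delta - 2\beta < 0$, where the paper instead argues via the $(2,1)$-entry), with the lower bounds on $i$ falling out cleanly from $M_0L^{j-1} \in \RB{}$.

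That said, the sentence ``greedy $R$-extraction then closes, giving $W = LR^i$'' glosses over the remaining substance: one must show $W$ cannot be $LR^iLW''$ for nonempty $W''$. The two facts you cite --- nonnegativity of the $(1,2)$-entry, giving $i \leq \beta/(\delta-\beta)$, and $(2,2)>(1,2)$, giving $i > \beta/(\delta-\beta)-1$ --- do pin down $i$, but they are only two of the conditions for $R^{-i}L^{-1}M_0L^{j'} \in \DB{}$; they do not exclude an $L$ after $LR^i$, for which one needs the $(1,1) > (2,1)$ comparison (this is exactly the separate contradiction the paper derives). Similarly, in item~3 the pair $c + \ell d \geq 2(a + \ell b)$ and $c + \ell d < a + \ell b + (d-b)$ is not incompatible on its own --- it only yields $d > a + (\ell+1)b$ --- and one must additionally invoke $c \geq 0$ from $M \in \DB{}$ together with $\ell \geq 2$ to obtain the opposing bound $d < a + (\ell-1)b$ from $ML^{\ell-1} \in \RB{}$. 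These are fill-in-the-details gaps rather than a wrong route, but they sit precisely where the paper's computations do their real work.
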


\begin{proof}
We start with the proof of \Cref{le:vstup_RLk_1}.

Let $M \widehat{V} = \mat{a}{b}{c}{d}$. Then
\[
M \widehat{V}RL^{j} = \mat{(j+1)a+jb}{a+b}{(j+1)c+jd}{c+d} \not \in \RB{}.
\]
Moreover, by \Cref{le:stejna_pismena}, the word $W$ has prefix $L$.
Therefore, $c+d>a+b$.
Since
\[
M \widehat{V}RL^{k} = \mat{(k+1)a+kb}{a+b}{(k+1)c+kd}{c+d}  \in \RB{}
\]
for all $k <j$, we have

\begin{equation} \label{eq:k_RLk}
(k+1)a+kb >(k+1)c+kd.
\end{equation}
For $k = j-1$ we obtain
\begin{equation} \label{eq:1_RLk}
ja+(j-1)b >jc+(j-1)d.
\end{equation}
In particular, since $j \geq 2$, we obtain for $k=1$:
\begin{equation} \label{eq:2_RLk}
2a+b >2c + d.
\end{equation}

If $W$ has prefix $L^2$, then
\[
L^{-2}WP = \mat{(j+1)a+jb}{a+b}{(j+1)(c-2a)+j(d-2b)}{c+d-2(a+b)} \in \D{}
\]
and therefore
\[
0 \leq(j+1)(c-2a)+j(d-2b)  \overset{\eqref{eq:1_RLk}}{<} - ja-2a -(j+1)b+c+d \overset{\eqref{eq:2_RLk}}{<} 0
\]
which is a contradiction. Therefore, $L^2$ is not a prefix of $W$. Let $i$ be maximal possible such that $LR^i$ is a prefix of $W$. We have
\[
R^{-i}L^{-1}WP = R^{-i}L^{-1}M \widehat{V}RL^{j} =
\]
\[
= \mat{(i+1)[(j+1)a+jb] - i [(j+1)c+jd]}{(i+1)(a+b)-i(c+d)}{j(c+d) - j(a+b)+c-a}{c+d-a-b} \in \D{}.
\]
It follows that
\begin{equation} \label{eq:b_RLk}
(i+1)(a+b) -i(c+d)\geq 0.
\end{equation}

If $j > i+1$, then \Cref{eq:k_RLk} holds for $k = i+1$ and therefore

\begin{equation} \label{eq:i1_RLk}
(i+2)a+(i+1)b >(i+2)c+(i+1)d.
\end{equation}

Whereas using that $i$ is maximal possible, we have
\[
(R^{-i}L^{-1}WP)_{1,2} \leq (R^{-i}L^{-1}WP)_{2,2} \iff (i+1)(a+b)-i(c+d) \leq c+d-a-b
\]
\[
\implies
\]
\[
0 \leq (i+1)(c+d) - (i+2)(a+b) \overset{\eqref{eq:i1_RLk}}{<} -b -c \leq 0
\]
which is a contradiction.
It means that $W$ has prefix $LR^i$ for some $i \geq j-1$.

We show that $W$ is also equal to this prefix.
Again, we suppose for contradiction that $W$ has prefix $LR^{i}L$.
It means that
\[
(R^{-i}L^{-1}WP)_{1,1} \leq (R^{-i}L^{-1}WP)_{2,1} \iff
(i+1)[(j+1)a+jb] - i [(j+1)c+jd] \leq j(c+d) - j(a+b)+c-a
\]
\[
\implies
\]
\[
0 \geq (i+2)[(j+1)a+jb] - (i+1) [(j+1)c+jd]  \overset{\eqref{eq:1_RLk}}{>} (i+2)(a+b)-(i+1)(c+d)+(ja+(j-1)b) \overset{\eqref{eq:b_RLk}}{\geq}
\]
\[
\overset{\eqref{eq:b_RLk}}{\geq} a+b-(c+d)+(ja+(j-1)b) \overset{\eqref{eq:2_RLk}}{>} 0
\]
where the last inequality uses that $j \geq 2$. This is a contradiction and therefore $W = LR^i$ and the first part of this lemma is proven.

%%%%% Item 2

We continue with the proof of \Cref{le:vstup_LRjL_1}.

Let again $M\widehat{V} = \mat{a}{b}{c}{d}$. We have
\[
M\widehat{V}LR^j = \mat{a+b}{ja+(j+1)b}{c+d}{jc+(j+1)d} \in \RB{}.
\]
It follows that
\begin{equation} \label{eq:ab_LRjL}
a+b>c+d.
\end{equation}
By \Cref{le:stejna_pismena}, we know that $W$ has prefix $L$ and thus
\[
L^{-1}WP = L^{-1} M\widehat{V}LR^jL =
\]
\[
= \mat{(j+1)a+(j+2)b}{ja+(j+1)b}{(j+1)c + (j+2)b-(j+1)a-(j+2)b}{jc+(j+1)d-ja-(j+1)b} \in \D{}.
\]
Moreover, we have
\[
(L^{-1}WP)_{1,2} - (L^{-1}WP)_{2,2} =
2ja+2(j+1)b-jc-(j+1)d \geq (j+1)(a+b-c-d) \overset{\eqref{eq:ab_LRjL}}{>} 0,
\]
which means that $W$ has prefix $LR$. Let $i\geq 1$ be maximal possible such that $W$ has prefix $LR^i$. We have
\[
R^{-i}L^{-1}WP = R^{-i}L^{-1}M \widehat{V}LR^jL =
\]
\[
 \mat{(i+1)[(j+1)a+(j+2)b] - i[(j+1)c+(j+2)d]}{(i+1)[ja+(j+1)b]-i[jc+(j+1)d]}{(j+1)c+(j+2)d-(j+1)a-(j+2)b}{jc+(j+1)d-ja-(j+1)b} \in \D{}
\]
and therefore
\begin{equation} \label{eq:i_LRjL}
(i+1)(ja+(j+1)b)-i(jc+(j+1)d) \geq 0.
\end{equation}
Therefore,
\[
(R^{-i}L^{-1}WP)_{1,1} - (R^{-i}L^{-1}WP)_{2,1} =
(i+2)[(j+1)a+(j+2)b] - (i+1)[(j+1)c+(j+2)d] \overset{\eqref{eq:i_LRjL}}{\geq}
\]
\[
 \overset{\eqref{eq:i_LRjL}}{\geq} (j+i+2)a+(j+i+3)b- [(j+i+1)c + (j+i+2)d]
\geq (j+i+2)(a+b-c-d)  \overset{\eqref{eq:ab_LRjL}}{>} 0
\]
and finally $W = LR^i$.

To prove the last item of the lemma, let $M = \mata$.

We have
\[
ML^k = \mat{a+kb}{b}{c+kd}{d} \in \RB{}
\]
for all $k <\ell$, which means that $a+kb>c+kd$ and in particular
\begin{equation} \label{eq:l1_vstup_L}
a+(\ell-1)b>c+(\ell-1)d.
\end{equation}
By \Cref{le:stejna_pismena}, we know that $W$ has prefix $L$.
We first investigate the case in which $L^2$ is not a prefix of $W$. We take $s \geq 0$ maximal possible such that $LR^s$ is a prefix of $W$. If $s = 0$, we have $W = L$ and the claim holds. In the case $s \geq 1$, we have
\[
R^{-j}L^{-1}WP = R^{-j}L^{-1}ML^{\ell} = \mat{(a+\ell b)(j+1)-j(c+\ell d)}{(j+1)b-jd}{c+\ell d-(a+\ell b)}{d-b} \in \D{}
\]
for all $j \leq s$.
Therefore, $(j+1)b-jd \geq 0$ and specially for $j = 1$ ($s \geq 1$)
\begin{equation} \label{eq:1_vstup_L}
2b-d \geq 0
\end{equation}
and for $j = s$
\begin{equation} \label{eq:s1_vstup_L}
(s+1)b-sd \geq 0.
\end{equation}
It follows that
\[
(R^{-s}L^{-1}WP)_{1,1} - (R^{-s}L^{-1}WP)_{2,1} =
(a+\ell b)(s+2)-(s+1)(c+\ell d) =
\]
\[
 = (s+1)(a-c+(\ell-1)(b-d))+a+\ell b+(s+1)b-(s+1)d \overset{\eqref{eq:s1_vstup_L}}{\geq}(s+1)(a-c+(\ell -1)(b-d)) + a+\ell b-d \overset{\eqref{eq:l1_vstup_L}}{>}
\]
\[
\overset{\eqref{eq:l1_vstup_L}}{>} a+\ell b-d > 2b-d \overset{\eqref{eq:1_vstup_L}}{\geq} 0.
\]
Therefore, we have
\[
(R^{-s}L^{-1}WP)_{1,1} > (R^{-s}L^{-1}WP)_{2,1}
\]
and therefore the word $W$ cannot have prefix $LR^sL$.
Thus, $W = LR^s$.

Now, we investigate if $W$ can have prefix $L^2$.
We have
\[
L^{-1}WP = L^{-1}ML^\ell  = \mat{a+\ell b}{b}{c+\ell d-(a+\ell b)}{d-b}.
\]
and for $\ell  \geq 2$ we have
\[
(L^{-1}WP)_{1,1} - (L^{-1}WP)_{2,1} =
2a+2\ell b-c-\ell d \overset{\eqref{eq:l1_vstup_L}}{>} 2b + c+ (\ell -2)d \geq 0
\]
and therefore $W$ cannot have the prefix $L^2$.
It remains to deal with the case $\ell  = 1$ and $W$ has prefix $L^t$ for some $t \geq 2$ maximal possible. We have
\[
L^{-t}WP = L^{-t}ML^ = \mat{a+b}{b}{c+d-t(a+b)}{d-tb} \in \D{}.
\]
Therefore, we have
\begin{equation} \label{eq:t_vstup_L}
(c+d)-t(a+b)\geq 0.
\end{equation}
Further,
\[
(L^{-t}WP)_{2,2} - (L^{-t}WP)_{1,2} =
d-(t+1)b \overset{\eqref{eq:t_vstup_L}}{\geq} ta-c-b \geq 2a-c-b >0
\]
where we have used $t \geq 2$.
Therefore, $W$ cannot have prefix $L^tR$, which means $W = L^t$.
\end{proof}

\begin{corollary} \label{le:vstup_V_2}
Let $\Tedge{M}{V}{W}{P}$ with $M,P \in \DB{}$, $V, W \in \Dj$.
\begin{enumerate}
  \item \label{le:vstup_RLk_2}
If $W = L^{j}R\widehat{W}$ for some $ \widehat{W} \in \Dj$ and $j\geq 2$, then
\[
V = R^{i}L \quad \text{ and } \quad i \geq j-1.
\]
\item \label{le:vstup_L_2}
If $W = L^{\ell}$ and $\ell \geq 1$, then
\[
V = R^{s}L \quad \text{ or } \quad V = L^{t},
\]
where $s \geq 0, t \geq 2$. Moreover, if $V = L^{t}$, then $\ell = 1$.
\end{enumerate}
\end{corollary}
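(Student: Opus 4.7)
The plan is to derive both items of the corollary from \Cref{le:vstup_V_1} by applying the symmetries of $\T{}$ recorded in \Cref{prop:assoc_sym}. Composing the transpose symmetry $\Tedge{M}{V}{W}{P} \leadsto \Tedge{P^T}{W^T}{V^T}{M^T}$ with the associate symmetry yields that whenever $\Tedge{M}{V}{W}{P}$ is an edge of $\T{}$, so is $\Tedge{\assoc{P^T}}{\assoc{W^T}}{\assoc{V^T}}{\assoc{M^T}}$. On words over $\{L,R\}$, the transpose swaps $L \leftrightarrow R$ and reverses the order of letters, while the associate swaps $L \leftrightarrow R$ but preserves the order; the composition therefore fixes each letter and reverses the order, so $\assoc{W^T}$ is the mirror image of $W$, and similarly $\assoc{V^T}$ is the mirror image of $V$.

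With this in hand, the first item of the corollary follows immediately from the first item of \Cref{le:vstup_V_1}. Indeed, writing $W = L^j R \widehat{W}$ with $j \geq 2$, the mirror image of $W$ has the form $\widehat{W}' R L^j$ for some $\widehat{W}' \in \Dj$. Applying the first item of \Cref{le:vstup_V_1} to the reversed edge forces its output $\assoc{V^T}$ to equal $L R^i$ with $i \geq j-1$. Reversing again, $V = R^i L$ with $i \geq j-1$.

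Analogously, the second item of the corollary follows from the third item of \Cref{le:vstup_V_1}: if $W = L^\ell$ then its mirror image equals $L^\ell$, and the third item applied to the reversed edge says that $\assoc{V^T}$ equals either $L R^s$ with $s \geq 0$ or $L^t$ with $t \geq 2$, with the latter forcing $\ell = 1$. Mirroring back, $V = R^s L$ or $V = L^t$, with the same constraints.

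The only step requiring care is justifying that transpose and associate together act as the mirror image on $\{L,R\}$-words and preserve the edge relation of $\T{}$; the first is a direct calculation using $L^T = \assoc{L} = R$ (and symmetrically for $R$) together with the fact that transpose reverses matrix products while associate distributes over them, and the second is precisely the content of \Cref{prop:assoc_sym}. This reduces the statement to Lemma \ref{le:vstup_V_1} without any further matrix bookkeeping.
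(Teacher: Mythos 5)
Your proof is correct and follows the same route as the paper: reduce to \Cref{le:vstup_V_1} via the symmetries of \Cref{prop:assoc_sym}. The paper's own proof is terser (it just cites the transpose symmetry and items 1 and 3 of \Cref{le:vstup_V_1}), and your explicit observation that transpose composed with associate gives the pure mirror-reversal on $\{L,R\}$-words — so that \Cref{le:vstup_V_1} can be applied verbatim without silently invoking its $L\leftrightarrow R$-swapped form — is a welcome clarification of a step the paper leaves implicit.
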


\begin{proof}
The two claims follow directly from \Cref{le:vstup_RLk_1,le:vstup_L_1} of \Cref{le:vstup_V_1} and \Cref{prop:assoc_sym}, namely the fact that $\Tedge{M}{V}{W}{P}$ implies $\Tedge{P^T}{W^T}{V^T}{M^T}$.
\end{proof}

\begin{lemma} \label{le:hrany_v_cyklu}
Every closed walk in  the transducer $\T{}$ either includes at least one edge with a nonempty input which cannot be written as $L^{q}R$ or $R^{q}$ for some $q \geq 1$ or the input word of this walk is $R^r$ for some $r \geq 1$.
\end{lemma}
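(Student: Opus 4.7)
My plan is to prove the lemma via a monotone-invariant argument on the top-left matrix entry. Define $\phi(M) := a_M$. I will show that on every edge of $\T{}$ whose input is either $L^q R$ or $R^q$ (with $q \geq 1$), $\phi$ is non-increasing, with strict decrease whenever the input is of the form $L^q R$. Given such an invariant, a closed walk $\Twalk{M_0}{V}{W}{M_0}$ satisfies $\phi(M_0) = \phi(M_0)$, so if it uses only edges of the two allowed forms then no $L^q R$ edge can appear in it; consequently $V$ is a concatenation of $R^{q_i}$'s and equals $R^r$ for some $r \geq 1$. This is exactly the second alternative of the lemma, and the first alternative accounts for all other closed walks.

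For the case $V = R^q$, \Cref{le:vstup_V_1}\ref{le:vstup_L_1}, transported through the $\assoc$-symmetry of \Cref{prop:assoc_sym}, forces the output $W$ to be either $R^t$ (with $t \geq 2$, possible only when $q = 1$) or $R L^s$ (with $s \geq 0$, covering $W = R$ as $s = 0$). A short manipulation of the matrix equation $MV = WN$ (reading off the first column) gives $a_N = a_M - t c_M$ in the first subcase and $a_N = a_M - c_M$ in the second, both bounded by $a_M$ since $c_M \geq 0$.

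The case $V = L^q R$ with $q \geq 1$ is where the real work lies. By \Cref{le:stejna_pismena}, $W$ begins with $R$; write $W = R\hat W$ with $\hat W \in \Dj$. Then $N' := \hat W N = R^{-1} M V$ can be computed directly, and a short calculation produces $N'_{11} = (a_M - c_M) - q(d_M - b_M)$. Since $M \in \DB{}$ the inequality $d_M > b_M$ is strict between integers and hence $d_M - b_M \geq 1$; combined with $q \geq 1$ and $c_M \geq 0$ this gives $N'_{11} \leq a_M - 1$. The state $N$ is recovered from $N'$ by left-multiplying successively by $X^{-1}$ for each letter $X \in \{L,R\}$ of $\hat W$. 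Here $L^{-1}$ leaves the $(1,1)$ entry unchanged while $R^{-1}$ subtracts the (non-negative) $(2,1)$ entry from it; and each intermediate matrix $X_{j+1}\cdots X_k N$ is a product of non-negative factors in $\Dj$ and $\DB{}$, so $(2,1)$ entries are always non-negative along the extraction. Hence the $(1,1)$ entry is monotone non-increasing throughout, giving $a_N \leq N'_{11} \leq a_M - 1 < a_M$, the required strict decrease.

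The main obstacle is controlling $a_N$ in the $V = L^q R$ case without knowing $\hat W$ explicitly; the decisive observation is that the extraction process preserves non-negativity of intermediate matrices and only non-increases the $(1,1)$ entry. Once that is in place, assembling the pieces into the monotonicity of $\phi$ along a closed walk is immediate, and the dichotomy of the lemma follows.
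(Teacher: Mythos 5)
Your proposal is correct and takes essentially the same approach as the paper: both arguments track the $(1,1)$ entry of the state matrix, show it is non-increasing along $R^q$-input edges (using the symmetric version of \Cref{le:vstup_V_1} to constrain $W$) and strictly decreasing along $L^qR$-input edges (by computing $R^{-1}MV$ and using $d>b$), and then conclude from constancy around a closed walk. The only cosmetic difference is that you make the ``extraction'' argument for $a_N \le (R^{-1}WN)_{1,1}$ explicit, whereas the paper leaves that non-negativity step implicit.
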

\begin{proof}
Let $M = \mat{a}{b}{c}{d}$, $N = \mat{e}{f}{g}{h}$ and $\Tedge{M}{L^qR}{W}{N}$. By \Cref{le:stejna_pismena}, $W$ has prefix $R$. By direct computation, we obtain
\[
(R^{-1}WN)_{1,1} = a+qb - c - qd
\]
and therefore
\[
e \leq a+ qb-c-qd \leq a + q(b-d) < a,
\]
where the last inequality follows from $M \in \DB{}$ which implies $b<d$.

Similarly for the transition $\Tedge{M}{R^q}{W}{N}$, we obtain using \Cref{le:vstup_L_1} of \Cref{le:vstup_V_1} that either $W = RL^s$ for some $s \geq 0$, or $W = R^t$ for $t \geq 2$ and $q= 1$. By direct computation, we obtain
\[
e = a-c \leq a
\]
in the first case and
\[
e = a-tc \leq a
\]
in the second case.

This means that the number in the first row and first column of the state matrix after taking the edge with the input word $R^q$ is either the same or it is lessened, and after taking the edge with the input word $L^qR$ it is always lessened.
\end{proof}

\begin{lemma} \label{le:hrany_do_P}
Let $\Tedge{N}{L^\ell }{LR^s}{P}$, where $N,P \in \DB{}$, $N = \mat{a}{b}{c}{d}$, $\ell  \geq 1$ and $s \geq \ell +1$. We have:
\[
a <2b.
\]
\end{lemma}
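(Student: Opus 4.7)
The plan is to extract $a < 2b$ from three ingredients: the matrix identity $NL^\ell = LR^s P$, the fact that the entries of $P$ are non-negative integers, and the edge condition $NL^\ell \notin \RB{}$. First, I would compute $P = (LR^s)^{-1}NL^\ell$ explicitly by matrix multiplication. What matters is the $(1,2)$-entry, which works out to $f = (s+1)b - sd$; since $P \in \N^{2,2}$ we get $f \geq 0$, i.e.\ $sd \leq (s+1)b$.

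Next, I would upgrade this to $(\ell+1)d \leq (\ell+2)b$ using the hypothesis $s \geq \ell+1$ — a routine manipulation based on $(\ell+1)(s+1) - (\ell+2)s = \ell+1 - s \leq 0$. In parallel, the edge condition $NL^\ell \notin \RB{}$ together with $d > b$ (from $N \in \DB{}$, which excludes the column-2 failure mode) forces the column-1 failure $a + \ell b \leq c + \ell d$, i.e.\ $a - c \leq \ell(d-b)$. Finally, $c \leq d - 1$ holds because $d > c$ are integers (from $N \in \DB{}$).

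These three bounds chain in one line:
\[
a \leq c + \ell(d-b) \leq (d-1) + \ell(d-b) = (\ell+1)d - \ell b - 1 \leq (\ell+2)b - \ell b - 1 = 2b - 1,
\]
yielding $a < 2b$. I do not anticipate a serious obstacle: once one writes out $P$ and identifies its $(1,2)$-entry as the carrier of the information that $s \geq \ell+1$, everything reduces to elementary arithmetic. The only step requiring a moment of care is separating the two failure modes of $\RB{}$ so that the useful inequality $a + \ell b \leq c + \ell d$ is correctly isolated.
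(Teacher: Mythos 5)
Your proof is correct, but it takes a somewhat different route than the paper. The paper sets $P = \mat{e}{f}{g}{h}$, computes $N = LR^sP L^{-\ell}$, and then extracts the single $\DB{}$-inequality $N_{2,2} > N_{2,1}$ (i.e.\ $f+(s+1)h > e+(s+1)g-\ell(f+(s+1)h)$), from which the bound $2b - a > f + (s-\ell-1)h + g \geq 0$ drops out in one algebraic step using $s \geq \ell+1$ and the non-negativity of $e,f,g,h$. You instead compute $P = (LR^s)^{-1}NL^\ell$ in terms of $a,b,c,d$ and chain three inequalities: $P_{1,2} = (s+1)b - sd \geq 0$ (upgraded via $s\geq\ell+1$ to $(\ell+1)d \leq (\ell+2)b$), the column-1 failure of $\RB{}$, and the integrality $c \leq d-1$. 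Both are elementary and valid, and yours has the merit of working directly in the target variables $a,b$. One small remark: you do not actually need to invoke the edge condition $NL^\ell \notin \RB{}$ to obtain $a + \ell b \leq c + \ell d$ — that inequality is exactly the non-negativity of $P_{2,1} = (c+\ell d) - (a+\ell b)$, which already holds because $P \in \DB{} \subseteq \N^{2,2}$. Extracting it that way would make your proof parallel the paper's in using only matrix non-negativity plus one $\DB{}$-condition, avoiding the extra case analysis on which $\RB{}$-condition fails.
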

\begin{proof}
Let $P = \mat{e}{f}{g}{h}$. By direct computation, we obtain
\[
N = LR^sPL^{-\ell } = \mat{e+sg-\ell (f+sh)}{f+sh}{e+(s+1)g-\ell (f+(s+1)h)}{f+(s+1)h} \in \DB{}.
\]

Therefore,
\begin{equation} \label{eq:cd_hrany_do_P}
f+(s+1)h>e+(s+1)g-\ell (f+(s+1)h)
\end{equation}
and we have
\[
2b-a = 2(f+sh) + \ell (f+sh)-e-sg \overset{\eqref{eq:cd_hrany_do_P}}{>} f+(s-\ell -1)h+g \geq 0
\]
where the last inequality holds because $s\geq \ell +1$. It follows that $2b>a$.
\end{proof}

\begin{lemma} \label{le:vstup_do_N}
Let $\Tedge{M}{V}{W}{N}$, where $N,M \in \DB{}$, $N = \mat{a}{b}{c}{d}$, $V,W \in \Dj$, $V$ has suffix $L$ and $a\leq 2b$. We have:
\[
V = R^pL,
\]
where $p \geq 1$.
\end{lemma}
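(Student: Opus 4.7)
Since $V$ ends in $L$, \Cref{le:stejna_pismena} forces $W$ to end in $L$ as well; this observation is the key ingredient below. The plan is to distinguish the following four exhaustive cases based on the last run of $V$ and the run (if any) immediately preceding it:
\begin{enumerate}[(1)]
\item $V = L^{\ell}$ with $\ell \geq 1$;
\item $V = \widehat{V}\,R\,L^{\ell}$ with $\widehat{V} \in \Dj$ (possibly empty) and $\ell \geq 2$;
\item $V = \widehat{V}\,L\,R^{j}\,L$ with $\widehat{V} \in \Dj$ and $j \geq 1$;
\item $V = R^{p}L$ with $p \geq 1$.
\end{enumerate}
Exhaustivity is straightforward: if $V$ contains no $R$ we are in case (1); otherwise the final run of $V$ is $L^\ell$ preceded by an $R$-run $R^j$ with $j \geq 1$. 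If $\ell \geq 2$ we are in case (2); if $\ell = 1$ and the $R$-run $R^j$ is the first run of $V$ we are in case (4); and if $\ell = 1$ but an $L$-run occurs before $R^j$ we are in case (3).

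Cases (2) and (3) can be dismissed immediately by invoking \Cref{le:vstup_V_1}: item 1 applied in case (2) yields $W = L R^i$ with $i \geq \ell - 1 \geq 1$, and item 2 applied in case (3) yields $W = L R^i$ with $i \geq 1$. In either situation $W$ ends in $R$, contradicting the initial observation that $W$ ends in $L$.

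Case (1) is the only step where the hypothesis $a \leq 2b$ is used. Here item 3 of \Cref{le:vstup_V_1} combined with ``$W$ ends in $L$'' forces $W = L^s$ for some $s \geq 1$. Writing $M = \mat{e}{f}{g}{h}$, the relation $MV = WN$ yields
\[
N = L^{-s} M L^{\ell} = \mat{e + \ell f}{f}{g + \ell h - s(e + \ell f)}{h - s f},
\]
so $a = e + \ell f$ and $b = f$. The hypothesis $a \leq 2b$ then reads $e \leq (2-\ell) f$, which forces $e \leq 0$ when $\ell \geq 2$ and $e \leq f$ when $\ell = 1$; both contradict $M \in \DB \subseteq \CB$, which requires $e > f$. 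Hence only case (4) survives and gives the conclusion. The main subtlety is that the asymmetric hypothesis $a \leq 2b$ is precisely tuned to eliminate $V = L^\ell$ via the matrix computation, while the remaining undesired forms are ruled out purely structurally through \Cref{le:vstup_V_1}.
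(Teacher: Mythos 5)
The proof rests on a misreading of Lemma~\ref{le:stejna_pismena}. That lemma states that the \emph{last} letter of the input word equals the \emph{first} letter of the output word; since $V$ ends in $L$, the only conclusion is that $W$ \emph{starts} with $L$, not that $W$ ends in $L$. Your key observation ``$W$ ends in $L$'' is therefore false, and the dismissals it supports collapse: in cases (2) and (3), Lemma~\ref{le:vstup_V_1} indeed gives $W=LR^i$ with $i\geq 1$, but such a $W$ \emph{does} start with $L$, so there is no contradiction. Likewise, in case (1), item~3 of Lemma~\ref{le:vstup_V_1} allows $W=LR^s$ with $s\geq 1$ as well as $W=L^t$; both start with $L$, and you only address the second.

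This leaves the entire branch $W=LR^k$, $k\geq 1$ (which arises from all three of cases (1)--(3)), completely untreated, and the hypothesis $a\leq 2b$ is never used there. In the paper, this branch is where the real work happens: one passes to the edge $\Tedge{N^T}{W^T}{V^T}{M^T}$ via Proposition~\ref{prop:assoc_sym}, uses $N^T=\bigl(\begin{smallmatrix}a & c\\ b & d\end{smallmatrix}\bigr)$ together with $a\leq 2b$ and $c<d$ to pin down the output $V^T$ as $RL^p$, and then transposes back to conclude $V=R^pL$. Your computation in the sub-case $V=L^\ell$, $W=L^s$ is correct and equivalent (up to which matrix you solve for) to the paper's, but that sub-case is the easy one. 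As written, the argument does not establish the lemma.
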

\begin{proof}

By \Cref{le:stejna_pismena}, we know that since $V$ has suffix $L$, the word $W$ has prefix $L$. We have by \Cref{le:vstup_V_1} that $W = LR^k$ for some $k\geq 1$ or $W = L^s$ for some $s \geq 1$ or $V = R^pL$ for some $p\geq 1$.
In the third case the claim holds. The other two possibilities are discussed separately.
\begin{enumerate}
\item $W = LR^k$ for some $k \geq 1$.

According to \Cref{prop:assoc_sym}, there exists the edge $\Tedge{N^T}{W^T}{V^T}{M^T}$. The starting vertex of this edge is $N^T = \mat{a}{c}{b}{d}$ and the input word is $W^T = L^kR$. Using \Cref{thm:Raney_edge} we can determine the output word of this edge (the word $V^T$). We already know that the word $V^T$ has prefix $R$.

We have
\[
R^{-1}N^TL^kR = \mat{a+kc-b-kd}{a+(k+1)c-b-(k+1)d}{b+kd}{b+(k+1)d}.
\]
Moreover, because $a \leq 2b$ and $c<d$, we have
\[
(R^{-1}N^TL^kR)_{1,1} - (R^{-1}N^TL^kR)_{2,1} =
a-2b+k(c-2d) < 0
\]
which means that $V^T$ has prefix $RL$.

Let $p \geq 1$ be maximal possible such that $V^T$ has prefix $RL^p$. We have
\[
L^{-p}R^{-1}N^TW^T = \mat{a+kc-b-kd}{a+(k+1)c-b-(k+1)d}{(p+1)(b+kd)-p(a+kc)}{(p+1)(b+(k+1)d)-p(a+(k+1)c)} \in \D{}
\]
and therefore
\begin{equation} \label{eq:p_vstup_do_N}
(p+1)(b+kd)-p(a+kc) \geq 0.
\end{equation}
Moreover,
\[
(L^{-p}R^{-1}N^TW^T )_{2,2} - (L^{-p}R^{-1}N^TW^T )_{1,2} =
(p+2)(b+(k+1)d)-(p+1)(a+(k+1)c) \overset{\eqref{eq:p_vstup_do_N}}{\geq}
\]
\[
\overset{\eqref{eq:p_vstup_do_N}}{\geq}
b+(k+1)d-(a+(k+1)c) +(p+1)d-pc =
(k+1)(d-c)+p(d-c)+(b+d-a) >
\]
\[
> b+d-a >2b-a \geq 0
\]
where the last inequality follows from $a \leq 2b$. Therefore, $V^T = RL^p$.

\item $W = L^s$ for some $s \geq 1$.

Because the input word has suffix $L$ we know by \Cref{le:vstup_V_1} that $V = R^pL$ for some $p \geq 1$ or $V = L^t$ for some $t\geq 1$.
Now, we suppose for contradiction that the second case holds. We have $M = WNV^{-1} = L^s N L^{-t} =  \mat{a-tb}{b}{s(a-tb)+c-td}{sb+d}$. Moreover $M \in \DB{}$ and therefore
\[
M_{1,1} >M_{1,2} \iff a-tb>b \implies a>2b
\]

which is a contradiction.

\end{enumerate}

\end{proof}

A few lemmas, which are used in the proof of \Cref{prop:hrany_do_P2}, follow.

\begin{lemma} \label{le:1_hrany_do_P2}
Let
\[
\Tedge{P_1}{V_1L^{k_1}}{LR^{j_1}}{M} \text{ and } \Tedge{P_2}{V_2L^{k_2}}{LR^{j_2}}{M},
\]
where $P_1, P_2,M \in \DB{}$, $V_1,V_2 \in \Dj$ and neither of them has suffix $L$, $k_1,k_2 \geq 1,j_1,j_2 \geq 0$ and $j_2 >j_1$.
Then $k_2 \leq k_1$.
\end{lemma}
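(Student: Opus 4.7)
The plan is to pin $k_i$ down as the floor of an explicit rational function of $j_i$ read off from the entries of $M$, and then deduce $k_2\le k_1$ from the strict monotonicity of that function.

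Write $M = \mat{a}{b}{c}{d}\in\DB$. Using the edge equation $P_i V_i L^{k_i} = L R^{j_i} M$ I would compute
\[
P_i V_i \;=\; L R^{j_i} M L^{-k_i} \;=\; \mat{a + j_i c - k_i(b + j_i d)}{b + j_i d}{a + (j_i+1) c - k_i (b + (j_i+1) d)}{b + (j_i+1) d}.
\]
The proper-prefix condition of \Cref{thm:Raney_edge} forces $P_i V_i \in \RB$, and in particular its bottom-left entry must be nonnegative, which rearranges to
\[
k_i \;\le\; f(j_i), \qquad f(j) := \frac{a + (j+1) c}{b + (j+1) d}.
\]

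For the matching strict lower bound $k_i > f(j_i) - 1$, observe that, after substituting the entries of $P_iV_i$, this inequality is exactly $(P_i V_i)_{22} > (P_i V_i)_{21}$. If $V_i = \emptyword$, then $P_iV_i = P_i \in \DB$ and the inequality is immediate from the definition of $\DB$. Otherwise, by hypothesis $V_i$ ends in $R$; writing $V_i = V_i'' R$ and setting $Q = P_iV_i''$, the matrix $Q$ is itself in $\RB$ as a proper prefix, hence $Q_{22} > Q_{12} \ge 0$. A direct expansion of $P_iV_i = QR$ gives
\[
(P_i V_i)_{22} - (P_i V_i)_{21} \;=\; (QR)_{22} - (QR)_{21} \;=\; Q_{22} \;>\; 0.
\]
Translating back yields $(k_i+1)(b+(j_i+1)d) > a+(j_i+1)c$, i.e.\ $k_i > f(j_i)-1$. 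Combining the two bounds forces $k_i = \lfloor f(j_i) \rfloor$.

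Finally, the direct computation
\[
f(j+1) - f(j) \;=\; \frac{bc - ad}{(b+(j+1)d)(b+(j+2)d)} \;=\; \frac{-n}{(b+(j+1)d)(b+(j+2)d)} \;<\; 0
\]
shows that $f$ is strictly decreasing on $\N$, so $j_2 > j_1$ implies $f(j_2) < f(j_1)$, and applying the monotone floor function gives $k_2 = \lfloor f(j_2)\rfloor \le \lfloor f(j_1)\rfloor = k_1$, as claimed. The main delicacy is the case split in the lower-bound argument, where one genuinely uses the hypothesis that $V_i$ does not end in $L$ to obtain the $\CB$-type inequality $(P_iV_i)_{22}>(P_iV_i)_{21}$; the remaining steps are routine matrix algebra and monotonicity.
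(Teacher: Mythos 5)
Your proof is correct and rests on the same underlying computation as the paper's: the explicit form of $P_iV_i = LR^{j_i}ML^{-k_i}$, the nonnegativity of its bottom-left entry (coming from $P_iV_i\in\D{}$), and the strict inequality $(P_iV_i)_{2,2}>(P_iV_i)_{2,1}$ obtained from the hypothesis that $V_i$ does not end in $L$. The organization, however, is different. The paper uses only one of these inequalities per matrix — the strict inequality for $P_1V_1$ (its equation \eqref{eq:le_1}) and the nonnegativity for $P_2V_2$ — and chains them into a direct contradiction from the assumption $k_2>k_1$. You instead apply both inequalities to both matrices to pin down $k_i=\lfloor f(j_i)\rfloor$ exactly, where $f(j)=\frac{a+(j+1)c}{b+(j+1)d}$, and then observe that $f(j+1)-f(j)=-n/((b+(j+1)d)(b+(j+2)d))<0$ makes $f$ strictly decreasing, so the conclusion drops out from monotonicity of the floor. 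This buys a slightly stronger structural fact (that $k_i$ is completely determined by $j_i$ and $M$) and avoids the paper's somewhat awkward parenthetical remark ruling out the degenerate equality case. Your handling of the lower bound, including the case split on $V_i=\emptyword$ versus $V_i$ ending in $R$ and the identity $(QR)_{2,2}-(QR)_{2,1}=Q_{2,2}>0$ for $Q=P_iV_i''\in\RB{}$, is exactly the justification the paper leaves implicit.
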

\begin{proof}
We put $M = \mat{a}{b}{c}{d}$. Then
\[
P_1V_1 = LR^{j_1}ML^{-k_1} = \mat{a+j_1c-k_1(b+j_1d)}{b+j_1d}{a+(j_1+1)c-k_1(b+(j_1+1)d)}{b+(j_1+1)d}.
\]
As $V_1$ does not have suffix $L$, we have
\[
(P_1V_1)_{2,1} < (P_1V_1)_{2,2} \implies
\]
\begin{equation} \label{eq:le_1}
\implies a+(j_1+1)c-k_1(b+(j_1+1)d) < b+(j_1+1)d.
\end{equation}
(The case $(P_1V_1)_{2,1} = (P_1V_1)_{2,2} \wedge (P_1V_1)_{1,1} = (P_1V_1)_{1,2}$ is not possible, because $\det(P_1V_1) = n >0$).

Further, we have
\[
P_2V_2 = LR^{j_2}ML^{-k_2} = \mat{a+j_2c-k_2(b+j_2d)}{b+j_2d}{a+(j_2+1)c-k_2(b+(j_2+1)d)}{b+(j_2+1)d} \in \D{}.
\]
Let now suppose for contradiction that $k_2 > k_1$. We have
\[
0 \leq (P_2V_2)_{2,1} = a+(j_2+1)c-k_2(b+(j_2+1)d) \leq a+(j_2+1)c-(k_1 + 1)(b+(j_2+1)d) =
\]
\[
= a- (k_1 + 1)b + (j_2+1)(c-(k_1 + 1)d) \overset{(k_1+1)d\, \geq \, d\, >\, c ; \;j_1<j_2}{<} a- (k_1 + 1)b + (j_1+1)(c-(k_1 + 1)d) =
\]
\[
= a + (j_1+1)c - (k_1 + 1)(b +(j_1+1)d)\overset{\eqref{eq:le_1}}{<}  0
\]
which is a contradiction.
\end{proof}

\begin{lemma} \label{le:2_hrany_do_P2}
Let $P_1,M \in \DB{}$, $\ell \geq 2$ and in the transducer $\T{}$ be the edge
\[
\Tedge{P_1}{L}{L^\ell }{M}.
\]
Then every other incoming edge of the state $M$ with the input word which has suffix $L$ is of the form
\[
\Tedge{P_2}{R^mL}{W}{M}
\]
for some $P_2 \in \DB{}, W \in \Dj$ and $m \geq 1$.
\end{lemma}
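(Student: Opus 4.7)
The strategy is to first extract a sharp numerical inequality from the hypothesis, and then show that any other incoming edge with suffix $L$ is incompatible with it unless its input is of the desired shape $R^m L$.

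\textbf{Step 1 (a key inequality from $\ell \geq 2$).} I would write $M = \mat{e}{f}{g}{h}$ and rearrange the given edge as a matrix identity $P_1 = L^\ell M L^{-1}$. A direct computation gives
\[
P_1 = \begin{pmatrix} e-f & f \\ \ell(e-f)-(h-g) & \ell f + h \end{pmatrix}.
\]
The conditions $(P_1)_{2,1}\geq 0$ (entries in $\N$) and $(P_1)_{1,1} > (P_1)_{2,1}$ (which is part of $P_1\in\RB{}$) combine to force $\ell = \lceil (h-g)/(e-f)\rceil$; in particular, $\ell$ is determined by $M$. The assumption $\ell\geq 2$ is therefore equivalent to the strict inequality $h-g>e-f$ (equivalently $f+h>e+g$), which I will denote $(\star)$.

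\textbf{Step 2 (case analysis of another edge into $M$).} Let $\Tedge{P_2}{V_2}{W_2}{M}$ be any other incoming edge with $V_2$ ending in $L$; by \Cref{le:stejna_pismena} the word $W_2$ starts with $L$. Since $V_2$ ends in some maximal run $L^j$ preceded by either nothing, a pure $R$-block, or a block containing an $L$ followed by $R^k$, the possibilities are exactly: (a) $V_2=L^t$ with $t\geq 2$, (b) $V_2$ has suffix $RL^j$ with $j\geq 2$, (c) $V_2$ has suffix $LR^j L$ with $j\geq 1$, (d) $V_2=L$ (and $P_2\neq P_1$), or (e) $V_2=R^m L$ with $m\geq 1$, the desired form. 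Applying the three items of \Cref{le:vstup_V_1}, case (b) forces $W_2=LR^i$ with $i\geq j-1\geq 1$; case (c) forces $W_2=LR^i$ with $i\geq 1$; case (a) forces $W_2=LR^s$ with $s\geq 0$ (the alternative $W_2=L^{t'}$ is excluded by the moreover clause); and case (d) allows either $W_2=LR^s$ with $s\geq 0$ or $W_2=L^{t'}$ with $t'\geq 2$.

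\textbf{Step 3 (dispatching cases via the prefix state).} The decisive point is that by \Cref{thm:Raney_edge} the state after reading the proper prefix $V_2 L^{-1}$, namely $P_2 V_2 L^{-1}= W_2 M L^{-1}$, must lie in $\RB{}\subseteq\N^{2,2}$. When $W_2=LR^i$ a one-line computation gives
\[
(LR^i M L^{-1})_{1,1} \;=\; (e-f) - i(h-g),
\]
which by $(\star)$ is strictly negative for every $i\geq 1$; this instantly kills (b), (c), and (a) with $s\geq 1$. The boundary $W_2=L$ (i.e.\ $s=0$ in (a) and in (d)) is handled by looking instead at $(LML^{-1})_{2,1} = (e+g)-(f+h)$, which is negative by $(\star)$. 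Finally, in (d) with $W_2=L^{t'}$, $t'\geq 2$, we get $P_2=L^{t'}ML^{-1}$, and the uniqueness of $\ell$ from Step 1 forces $t'=\ell$, hence $P_2=P_1$, contradicting ``other edge.'' Only case (e) survives, which is the claim.

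\textbf{Main obstacle.} The conceptual work is small: Step 1 reduces the whole hypothesis to the clean inequality $(\star)$, and Step 3 then dispatches every alternative using a single quantity $(e-f)-i(h-g)$. The care required is in the case analysis of Step 2, in particular keeping the boundary $s=0$ (i.e.\ $W_2=L$) and the $W_2=L^{t'}$ branch of case (d) properly separated, and in ensuring that the decomposition (a)--(e) really exhausts all words ending in $L$.
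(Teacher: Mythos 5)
Your proof is correct, but it takes a genuinely different route from the paper's. The paper transposes the problem via \Cref{prop:assoc_sym}: the two edges become outgoing edges of $M^T$ with inputs $R^\ell$ and $W^T$, and since $\ell\geq 2$ forces (via \Cref{le:vstupni_slova} and the prefix-code structure of outgoing input words) $M^T$ to also have an outgoing edge with input $L$, the word $W^T$ — which ends in $R$ and differs from $R^\ell$ — must start with $R$, contain an $L$, and hence have at least three runs; the symmetric version of \Cref{le:vstup_V_1} then forces $V^T = RL^m$. You instead work entirely on the original side: you solve $P_1 = L^\ell M L^{-1}$ for $\ell$ to turn the hypothesis $\ell\geq 2$ into the explicit inequality $h-g>e-f$, use \Cref{le:vstup_V_1} forward to pin down the possible shapes of $W_2$, and finish each unwanted case by exhibiting a negative entry of $W_2 M L^{-1}$, contradicting the prefix condition $P_2\,(V_2 L^{-1})\in\RB{}\subseteq\N^{2,2}$ from \Cref{thm:Raney_edge}. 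Both arguments are complete and of comparable length; the paper's leans harder on the symmetry machinery already set up in the framework, while yours makes the role of $\ell\geq 2$ numerically explicit and dispatches cases (a), (b), (c) with the single quantity $(e-f)-i(h-g)$, which is an appealing unification. Your step ruling out $W_2=L^{t'}$ by appealing to the uniqueness of $\ell$ is also correct, though the paper avoids this subcase entirely because the prefix-code argument on $M^T$ excludes $W^T$ being a power of $R$ from the start.
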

\begin{proof}
Let $\Tedge{P_2}{V}{W}{M}$ where $V \in \Dj$ and has suffix $L$, be some other incoming edge of the state $M$. By \Cref{le:stejna_pismena}, $W$ has prefix L. By \Cref{prop:assoc_sym}, we know that the edges $\Tedge{P_1}{L}{L^\ell }{M}$, $\Tedge{P_2}{V}{W}{M}$ exist if and only if the edges $\Tedge{M^T}{R^\ell }{R}{P_1^T}$ and $\Tedge{M^T}{W^T}{V^T}{P_2^T}$ exist.

Because $\ell  \geq 2$, we know according to \Cref{le:vstupni_slova} that $M^T$ has an outgoing edge with the input word $L$.
Further, by \Cref{thm:Raney_edge}, we obtain that every outgoing edge of the state $M^T$, which is not equal to $\Tedge{M^T}{R^\ell }{R}{P_1^T}$ and has input word with suffix $R$, has at least three runs.
Therefore, either $W^T = \widehat{W}LR^{j_1}$ or $W^T = \widehat{W} RL^{j_2}R$ for some $j_1 \geq 2,j_2 \geq 1$ and $\widehat{W} \in \Dj$. It means in the first case by the symmetric version of \Cref{le:vstup_RLk_1} of \Cref{le:vstup_V_1} and in the second case by the symmetric version of \Cref{le:vstup_LRjL_1} of \Cref{le:vstup_V_1} that $V^T = RL^m$ for some $m \geq 1$.

Therefore, $\Tedge{P_2}{V}{W}{M} = \Tedge{P_2}{R^mL}{W}{M}$ for some $m \geq 1$.
\end{proof}

\begin{lemma} \label{le:3_hrany_do_P2}
Let $\Tedge{P}{L^m}{W}{M}$, where $m \geq 1$, and $P= \mat{a}{b}{c}{d}, M = \mat{e}{f}{g}{h} \in \DB{}$. We have:
\begin{enumerate}
\item If $W = L^\ell$ for some $\ell \geq 1$, then $b=f$ and $a = e-mf$ and if $f \neq 0$, then $\frac{a}{b}< \frac{e}{f}$.

\item If $W = LR^\ell$ for some $\ell \geq 1$, then $b = f + \ell h > f \geq 0 $, $a = e + \ell g - m(f+\ell h) < e-mf$ and if $f \neq 0$, then $\frac{a}{b}< \frac{e}{f}$.
\end{enumerate}

\end{lemma}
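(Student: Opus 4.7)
The plan is to obtain both statements by a direct matrix computation starting from the defining relation $PL^m = WM$, which yields $P = WML^{-m}$. Using $L^k = \begin{pmatrix} 1 & 0 \\ k & 1 \end{pmatrix}$ and $R^k = \begin{pmatrix} 1 & k \\ 0 & 1 \end{pmatrix}$, each case reduces to multiplying three explicit $2\times 2$ matrices and reading off the entries $a,b$ of $P$.

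For \emph{Case 1}, where $W = L^\ell$, I would compute
\[
P \;=\; L^\ell M L^{-m} \;=\; \begin{pmatrix} e-mf & f \\ \ell e + g - m(\ell f + h) & \ell f + h \end{pmatrix},
\]
which immediately gives $a = e-mf$ and $b = f$. The inequality $\frac{a}{b} < \frac{e}{f}$ for $f \neq 0$ is then just the observation $\frac{a}{b} = \frac{e-mf}{f} = \frac{e}{f} - m < \frac{e}{f}$ since $m \ge 1$. Equivalently, $be - af = mf^2 > 0$.

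For \emph{Case 2}, where $W = LR^\ell$, I would first note that $LR^\ell = \begin{pmatrix} 1 & \ell \\ 1 & \ell+1 \end{pmatrix}$, hence
\[
P \;=\; LR^\ell M L^{-m} \;=\; \begin{pmatrix} e + \ell g - m(f + \ell h) & f + \ell h \\ e + (\ell+1)g - m(f + (\ell+1)h) & f + (\ell+1)h \end{pmatrix},
\]
so $a = e + \ell g - m(f + \ell h)$ and $b = f + \ell h$. The condition $M \in \DB{}$ gives $h > g \ge 0$ and $h > f \ge 0$, in particular $h \ge 1$, so $b = f + \ell h > f \ge 0$. Writing $a = (e - mf) + \ell(g - mh)$ and using $g - mh \le g - h < 0$ (since $m \ge 1$ and $h > g$) gives $a < e - mf$ strictly, because $\ell \ge 1$. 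Finally, for the ratio inequality, I would compute
\[
be - af \;=\; (f+\ell h)e - f\bigl(e + \ell g - m(f+\ell h)\bigr) \;=\; \ell(eh - fg) + mfb \;=\; \ell n + mfb,
\]
which is strictly positive when $f > 0$, yielding $af < be$ and hence $\frac{a}{b} < \frac{e}{f}$.

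There is no real obstacle here beyond careful bookkeeping of the matrix products; the only substantive input is the identity $eh - fg = \det(M) = n$ used in Case 2 to collapse $be - af$ into the clean expression $\ell n + mfb$, and the $\DB{}$ inequalities $h > g$, $h > f$, which drive every sign statement.
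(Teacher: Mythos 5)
Your proposal is correct and follows essentially the same route as the paper: write $P = WML^{-m}$, multiply out the $2\times 2$ matrices, and read off $a$ and $b$, then use the $\DB{}$ inequalities $h>f\ge 0$, $h>g\ge 0$ to drive the sign statements. The one small difference is your final step in Case~2, where you establish $\tfrac{a}{b}<\tfrac{e}{f}$ by the identity $be-af=\ell\det(M)+mfb=\ell n+mfb>0$; the paper instead deduces it from $a<e$, $b>f$ together with nonnegativity of the entries of $P,M\in\DB{}$ — both work, and your determinant identity is marginally cleaner.
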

\begin{proof}
In the first case, we obtain by direct computation that $b=f$, $a = e-mf$. Because $M \in \DB{}$, we have $f \geq 0$ and therefore $a \leq e$. Moreover, if $f \neq 0$, we have $\frac{a}{b} < \frac{e}{f}$.

We continue with the proof of the second case.
By direct computation, we obtain $b = f + \ell h$ and $a = e + \ell g - m(f+\ell h)$. Because $M \in \DB{}$, we have $h > f \geq 0$ and $g<h$.
So $e + \ell g - m(f+\ell h) = e-mf +\ell (g-h)m <e-mf$ and $f+\ell h > f \geq 0$. Therefore, $a<e$ and $b>f$, which for $f \neq 0$ means that $\frac{a}{b} <\frac{e}{f}$.
\end{proof}

\begin{lemma} \label{le:5_hrany_do_P2}
Let
\[
\Tedge{P_1}{L^{k_1}}{LR^{j_1}}{M} \text{ and } \Tedge{P_2}{L^{k_2}}{LR^{j_2}}{M},
\]
where $P_1 = \mat{e_1}{f_1}{g_1}{h_1}, P_2 = \mat{e_2}{f_2}{g_2}{h_2},M = \mat{a}{b}{c}{d} \in \DB{}$, $k_1,k_2 \geq 1,j_1,j_2 \geq 0$ and $j_2 >j_1$.
Then $e_2-(k_1-k_2)f_2<f_2$.
\end{lemma}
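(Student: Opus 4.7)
The plan is to convert the target inequality into a concrete algebraic statement about the entries $a,b,c,d$ of $M$ and then recognise it as a monotone consequence of the double-balancedness of $P_1$. Using the edge relations $P_1 L^{k_1} = LR^{j_1}M$ and $P_2 L^{k_2} = LR^{j_2}M$, direct matrix multiplication (in the spirit of \Cref{le:3_hrany_do_P2}) gives for $i \in \{1,2\}$
\[
e_i = a + j_i c - k_i(b + j_i d),\quad f_i = b + j_i d,\quad g_i = a + (j_i+1)c - k_i(b + (j_i+1)d),\quad h_i = b + (j_i+1)d.
\]

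Next I would substitute the expressions for $e_2$ and $f_2$ into $e_2 - (k_1-k_2)f_2 < f_2$. The terms involving $k_2$ cancel and the inequality becomes equivalent to $(k_1+1)(b + j_2 d) > a + j_2 c$. The hypothesis $P_1 \in \DB{}$ already provides, via $h_1 > g_1$, the shifted version of exactly this inequality:
\[
(k_1+1)(b + (j_1+1)d) > a + (j_1+1)c.
\]

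Finally, I would close the gap between the indices $j_1+1$ and $j_2$ by monotonicity. Set $\phi(j) = (k_1+1)(b + jd) - (a + jc)$; this is affine in $j$ with slope $(k_1+1)d - c$, which is strictly positive because $d > c$ (as $M \in \DB{}$) and $k_1 \geq 1$. Thus $\phi$ is strictly increasing, and since $j_2 > j_1$ forces $j_2 \geq j_1 + 1$, the previous step gives $\phi(j_2) \geq \phi(j_1+1) > 0$, which is precisely what is needed. The argument is almost entirely mechanical; the only non-routine decision is noticing that it is specifically the comparison $h_1 > g_1$ of $P_1$, rather than, for example, $e_1 > f_1$, that supplies the correct template, this being dictated by the index shift $j_1 \mapsto j_1+1$ implicit in going from the first column of $P_1$ to its second.
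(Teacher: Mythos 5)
Your proof is correct and follows essentially the same route as the paper: compute the entries of $P_1$ and $P_2$ from the edge relations, observe that the target inequality reduces (after the $k_2$-cancellation) to $\phi(j_2)>0$ where $\phi(j)=(k_1+1)(b+jd)-(a+jc)$, obtain $\phi(j_1+1)>0$ from $h_1>g_1$ (i.e.\ $P_1\in\DB{}$), and finish by the strict monotonicity of $\phi$ (slope $(k_1+1)d-c>0$ since $d>c$) together with $j_2\geq j_1+1$. The paper writes the same argument as a chain of displayed implications rather than introducing $\phi$ explicitly, but the ingredients and their roles are identical.
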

\begin{proof}
By direct computation, we obtain $e_2 = a + j_2c-k_2(b+j_2d),f_2 = b+j_2d, g_1 = a+(j_1+1)c-k_1(b+(j_1+1)d)$ and $h_1 = b+(j_1+1)d$. Since $P_1 \in \DB{}$, we have $g_1 <h_1$ and therefore the following series of inequalities holds.
\[
a + (j_1+1)c -k_1(b+(j_1+1)d) <b+(j_1+1)d
\]
\[
\implies
\]
\[
a  -(k_1+1)b <(j_1+1)[d(k_1+1) - c] \leq j_2[d(k_1+1) - c]
\]
(where the second inequality follows from $j_2 \geq j_1 +1$ and $[d(k_1+1) - c]>0$)
\[
\implies
\]
\[
a  + j_2c -k_2(b+j_2d)  < (b + j_2 d)(k_1-k_2+1)
\]
\[
\implies
\]
\[
e_2  < f_2(k_1-k_2+1),
\]
which is equivalent to the claim of the lemma.
\end{proof}

\begin{lemma} \label{le:6_hrany_do_P2}
For every $K = \mat{a}{b}{c}{d} \in \DB{}$ where $a>2b$, there exists an edge $ \Tedge{J}{L^t}{L^\ell }{K}$ where $t,\ell \geq 1$, and every other incoming edge of the state $K$ has $R$ in its output word.
\end{lemma}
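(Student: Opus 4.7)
My plan is to use the transpose symmetry \Cref{prop:assoc_sym}: an incoming edge $\Tedge{J}{V}{W}{K}$ corresponds bijectively to an outgoing edge $\Tedge{K^T}{W^T}{V^T}{J^T}$ from $K^T = \mat{a}{c}{b}{d}$, and the identities $(L^\ell)^T = R^\ell$ and $(R^s L)^T = R L^s$ convert the conditions ``input pure $L$'' and ``output pure $L$'' on the original edge into ``input pure $R$'' and ``output pure $R$'' on the transposed edge. Thus it suffices to show that $K^T$ has a unique outgoing edge with both input and output pure powers of $R$, and that this is the only outgoing edge from $K^T$ whose input is a pure power of $R$.

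To identify the candidate edge, let $m_0$ be the smallest positive integer with $K^T R^{m_0} \notin \RB{}$. Computing $K^T R^m = \mat{a}{ma+c}{b}{mb+d}$ shows that $K^T R^m \in \RB{}$ iff $m(a-b) < d-c$, so $m_0$ is the least integer with $m_0(a-b) \geq d-c$. By \Cref{thm:Raney_edge} there is a unique edge $\Tedge{K^T}{R^{m_0}}{W}{J^T}$, and by the symmetric version of \Cref{le:vstup_L_1} (obtained via \Cref{prop:assoc_sym}) the output $W$ has the form $RL^s$ with $s \geq 0$ or $R^t$ with $t \geq 2$ (the latter only when $m_0 = 1$).

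The key step is to exclude $s \geq 1$ using the hypothesis $a > 2b$. In the alternative $W = RL^s$,
\[
J^T = L^{-s} R^{-1} K^T R^{m_0} = \mat{a-b}{m_0(a-b)+c-d}{b-s(a-b)}{m_0 b + d - s[m_0(a-b)+c-d]},
\]
and the $(2,1)$-entry $b - s(a-b)$ must be nonnegative since $J^T \in \DB{}$, giving $s \leq b/(a-b)$. Since $a > 2b$ implies $a - b > b$, we obtain $b/(a-b) < 1$, forcing $s = 0$. Hence $W$ is a pure power of $R$ in either alternative, and transposing back produces the desired edge $\Tedge{J}{L^t}{L^{m_0}}{K}$ with $t, m_0 \geq 1$.

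For the second assertion, suppose $\Tedge{J'}{V'}{W'}{K}$ is any incoming edge with $W' = L^{\ell'}$ a pure power of $L$. Its transpose is $\Tedge{K^T}{R^{\ell'}}{(V')^T}{(J')^T}$, whose input $R^{\ell'}$ is a pure power of $R$. The prefix-code property of \Cref{thm:Raney_edge} forces this input to equal the unique pure-$R$ input $R^{m_0}$, so the edge coincides with the one analysed above and has output in $R^*$; translating back gives $V'$ itself a pure power of $L$, so the edge is the special edge. Every other incoming edge therefore has $R$ in its output word. The main technical obstacle is careful bookkeeping of the transpose — notably translating \Cref{le:vstup_L_1} and \Cref{le:vstup_L_2} (originally about outputs and inputs in $L^*$) into the dual statements, and isolating the inequality $b - s(a-b) \geq 0$ that encapsulates the hypothesis $a > 2b$.
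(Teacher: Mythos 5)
Your proposal is correct and follows essentially the same route as the paper: transpose $K$, apply \Cref{thm:Raney_edge} to find the unique outgoing edge from $K^T$ with pure-$R$ input, invoke the symmetric version of \Cref{le:vstup_L_1} to constrain the output to $RL^s$ or $R^t$, exploit $a>2b$ to force $s=0$, and transpose back via \Cref{prop:assoc_sym}. The only cosmetic difference is that the paper establishes the dichotomy ``$W=R$ or $W$ has prefix $R^2$'' by noting that $L^{-1}R^{-1}K^TR^\ell$ acquires a negative $(2,1)$-entry $2b-a$, whereas you compute $J^T = L^{-s}R^{-1}K^TR^{m_0}$ directly and derive $s \le b/(a-b) < 1$; these are the same underlying inequality, with your version spelling the step out in slightly more detail.
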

\begin{proof}
We have $K^T \in \DB{}$ and by \Cref{thm:Raney_edge}, there exists an edge $\Tedge{K^T}{R^\ell }{W}{J^T}$ for some $\ell  \geq 1, W \in \Dj$ and $J \in \DB{}$ and there is no other outgoing edge of the state $K^T$ with input word, which does not contain $L$.
We have:
\[
K^TR^\ell  = \mat{a}{\ell a+c}{b}{\ell b+d}.
\]
Because $a>2b$, we have either $W = R$ or $W$ has prefix $R^2$. In the second case, we have by the symmetric version of \Cref{le:vstup_L_1} of \Cref{le:vstup_V_1} that $\ell  = 1$ and $W = R^t$ for some $t \geq 2$. The claim now follows from \Cref{prop:assoc_sym}.
\end{proof}

\begin{lemma} \label{le:7_hrany_do_P2}
Every state $J \in \DB{}$ has an incoming edge with the input word, which has suffix $L$.
\end{lemma}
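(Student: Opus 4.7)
My plan is to exchange the problem for one about outgoing edges via the transpose symmetry of \Cref{prop:assoc_sym}. Namely, an incoming edge $\Tedge{M}{V}{W}{J}$ of $J$ with $V$ ending in $L$ corresponds to an outgoing edge $\Tedge{J^T}{W^T}{V^T}{M^T}$ of $J^T \in \DB{}$ whose output word $V^T$ begins with $L$. Because \Cref{le:stejna_pismena} forces the input and output of any edge to begin with the same letter, it is enough to show that every state $X \in \DB{}$ has at least one outgoing edge whose input word starts with $L$.

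To see this, I would unpack the structure of the outgoing edges of an arbitrary $X \in \DB{} \subseteq \RB{}$ using \Cref{le:vstupni_slova}: for every $Y \in \RB{}$ at most one of $YL, YR$ stays in $\RB{}$. Consequently, the set of prefixes $w$ for which $Xw \in \RB{}$ forms a single chain $\varepsilon = w_0, w_1 = Q_1, w_2 = Q_1Q_2, \ldots, w_{\ell-1} = Q_1\cdots Q_{\ell-1}$ with $Q_i \in \{L,R\}$, rather than a branching tree. For each $0 \leq k \leq \ell-2$, the node $w_k$ emits exactly one leaf by appending the opposite letter $\bar{Q}_{k+1}$, yielding an outgoing edge of $X$ with input word $Q_1 \cdots Q_k \bar{Q}_{k+1}$. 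At the last internal node $w_{\ell-1}$, both $w_{\ell-1}L$ and $w_{\ell-1}R$ must leave $\RB{}$ (otherwise one of them would extend the chain, contradicting the maximality of $\ell$), producing two length-$\ell$ edges with inputs $Q_1 \cdots Q_{\ell-1} L$ and $Q_1\cdots Q_{\ell-1}R$. This recovers the $1,2,\ldots,\ell-1,\ell,\ell$ length pattern noted after \Cref{le:vstupni_slova}.

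From this picture, the first letter of each outgoing edge of $X$ is either $\bar{Q}_1$ (for the unique length-$1$ edge) or $Q_1$ (for all edges of length at least $2$, which always include the two length-$\ell$ edges). Since $\{Q_1, \bar{Q}_1\} = \{L,R\}$, in either case at least one outgoing edge of $X$ starts with $L$. Setting $X = J^T$ and transporting back across the transpose symmetry then yields the desired incoming edge of $J$ with input ending in $L$. I do not anticipate a genuine obstacle here; the only point demanding mild care is justifying that the last internal node of the chain has both of its children outside $\RB{}$, which is immediate from the maximality of $\ell$ and \Cref{le:vstupni_slova}.
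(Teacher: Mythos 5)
Your structural idea is the right one and in fact matches the paper's: pass to $J^T \in \DB{}$, find an appropriate outgoing edge, and transpose back. The chain analysis using \Cref{le:vstupni_slova} is also correct and nicely explains the $1,2,\ldots,\ell-1,\ell,\ell$ length pattern; in particular you correctly identify that the two longest edges out of $X$ have inputs $Q_1\cdots Q_{\ell-1}L$ and $Q_1\cdots Q_{\ell-1}R$.

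However, the reduction step contains two compensating misreadings that then derail the conclusion. First, transposing a word reverses the order of its runs \emph{and} swaps $L\leftrightarrow R$; so if $V$ ends in $L$, then $V^T$ \emph{begins with $R$}, not $L$. Second, \Cref{le:stejna_pismena} says the \emph{last} letter of the input equals the \emph{first} letter of the output, not that the two words begin with the same letter. Tracking this correctly: you want $V$ to end in $L$, hence $V^T$ (the output of the transposed edge) begins with $R$, hence by \Cref{le:stejna_pismena} the input $W^T$ of the transposed edge \emph{ends} in $R$. So the target statement about $X=J^T$ should be ``$X$ has an outgoing edge whose input word \emph{ends} in $R$'', not ``starts with $L$''. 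Your chain argument does establish the correct target (take the edge with input $Q_1\cdots Q_{\ell-1}R$), but the conclusion you actually draw --- that some edge \emph{starts} with $L$ --- is not what is needed: transposing an edge whose input merely starts with $L$ (say the length-one edge with input $L$, which ends in $L$) produces an incoming edge of $J$ whose input ends in $R$, the wrong letter. If you replace ``starts with $L$'' by ``ends in $R$'' throughout, and fix the two misstatements of the symmetry, the proof goes through and is essentially the paper's argument, with your chain picture supplying the detail the paper only alludes to by citing \Cref{thm:Raney_edge}.
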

\begin{proof}
We have $J^T \in \DB{}$. By \Cref{thm:Raney_edge}, there is always an edge $\Tedge{J^T}{V}{W}{N^T}$, which has an input word $V$ with suffix $R$. Moreover, by \Cref{le:stejna_pismena}, this edge has output word $W$, which has prefix $R$. It means that by \Cref{prop:assoc_sym}, there is also an edge $\Tedge{N}{W^T}{V^T}{J}$ where $W^T$ has suffix $L$.
\end{proof}

\begin{proposition} \label{prop:hrany_do_P2}
Let
\[
\Tedge{P_1}{L^{k_1}}{LR^{j_1}}{M} \quad \text{ and } \quad \Tedge{P_2}{L^{k_2}}{LR^{j_2}}{M},
\]
where $P_1, P_2,M \in \DB{}$, $k_1 > k_2 \geq 1$ and $j_2 >j_1 \geq 0$.
Then there is no walk of the form
\[
\Twalk{Q}{VL^{(k_1-k_2+1)}}{W}{P_2}
\]
in the transducer $\T{}$ and for arbitrary walk of the form
\[
\Twalk{Q}{VL^{(k_1-k_2)}}{W}{P_2}
\]
in the transducer $\T{}$ (if there is one) ($Q$ is the first state before reading the start of the run of $L$'s in the input word), we have
\[
\Twalk{Q}{VL^{(k_1-k_2)}}{W}{P_2} = \Tedge{Q}{R^pL}{W_1}{
\Twalk{Q_0}{L^{(k_1-k_2-1)}}{W_2}{P_2}
},
\]
where $W_1W_2 = W$, $Q_0 \in \DB{}$ and $p \geq 1$.
\end{proposition}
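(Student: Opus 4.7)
The plan is to quantify how many $L$'s can be absorbed by a walk terminating at $P_2$ via pure-$L$-input edges, and then read off both halves of the statement as combinatorial consequences. Write $P_2 = \mat{e_2}{f_2}{g_2}{h_2}$. The starting point is \Cref{le:5_hrany_do_P2}, which yields the key inequality $e_2 < (k_1 - k_2 + 1) f_2$; this is the only place where the coexistence of the two edges $\Tedge{P_1}{L^{k_1}}{LR^{j_1}}{M}$ and $\Tedge{P_2}{L^{k_2}}{LR^{j_2}}{M}$ actually enters the argument.

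The core technical step is a capacity lemma I would prove first: any walk $\Twalk{R_0}{L^M}{W_0}{P_2}$ with $M \geq 1$ whose edges $\Tedge{T_{i-1}}{L^{m_i}}{W_i'}{T_i}$ (with $T_s = P_2$) all have pure-$L$ input satisfies $M = \sum_i m_i \leq k_1 - k_2 - 1$. My strategy is to track the ratio $\rho_i := a(T_i)/b(T_i)$ backwards along the walk: applying \Cref{le:3_hrany_do_P2} in both output cases $W_i' = L^\ell$ and $W_i' = LR^\ell$, and using $\det T_i = n > 0$ (which gives the weighted-average inequality $(a(T_i)+\ell c(T_i))/(b(T_i)+\ell d(T_i)) \leq \rho_i$ for $\ell \geq 0$), I expect $\rho_{i-1} \leq \rho_i - m_i$ uniformly. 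Telescoping, together with the membership constraint $\rho_i > 1$ coming from $T_i \in \DB{}$, then yields $\sum m_i < \rho_s - 1 = e_2/f_2 - 1 < k_1 - k_2$.

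With the capacity lemma in hand, I would finish by partitioning the edges of the walk $\Twalk{Q}{VL^r}{W}{P_2}$ according to whether they sit entirely inside $V$, entirely inside $L^r$, or straddle the $V$-$L^r$ boundary. Edges strictly inside $L^r$ have pure-$L$ inputs; any straddling edge ends in $L$ and starts in the $R$-tail of $V$, so by the classification of edge input shapes following \Cref{le:vstupni_slova} it must be $R^pL$ with $p \geq 1$, consuming exactly one $L$ of $L^r$. For part (a) with $r = k_1 - k_2 + 1$, both scenarios (with or without straddling edge) force the pure-$L$ portion inside $L^r$ to sum to at least $k_1 - k_2$, violating the capacity bound $k_1 - k_2 - 1$; hence no such walk exists. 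For part (b) with $r = k_1 - k_2$, a straddling edge is forced (otherwise the pure-$L$ sum equals $k_1 - k_2 > k_1 - k_2 - 1$), after which the remaining walk reads precisely $L^{k_1 - k_2 - 1}$, saturating the capacity. Because $Q$ is by definition the state just before the $L$-run begins, this straddling edge is the first edge of the walk, $V = R^p$, and the target $Q_0$ of this first edge is the start of the pure-$L$ tail, yielding the claimed decomposition.

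The main anticipated obstacle is the telescoping step inside the capacity lemma, specifically verifying that monotonicity $\rho_{i-1} \leq \rho_i - m_i$ really survives through edges with $LR^\ell$ output and that the denominator shifts controlled by \Cref{le:3_hrany_do_P2} never reverse the inequality; once this step is secured, the remainder reduces to combinatorial bookkeeping on the few edge shapes compatible with the $V$-$L^r$ boundary.
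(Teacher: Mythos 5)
Your ``capacity lemma'' is essentially the paper's argument in disguise: the paper tracks the absolute quantity $(Q_i)_{1,1}$ together with the monotonicity $(Q_{i-1})_{1,2} \geq (Q_i)_{1,2}$ coming from \Cref{le:3_hrany_do_P2}, while you track the ratio $\rho_i = (Q_i)_{1,1}/(Q_i)_{1,2}$; both telescopings, combined with \Cref{le:5_hrany_do_P2} and $Q_0\in\DB{}$ (hence $\rho_0>1$), give the same numerical bound $u_1+\cdots+u_m \leq k_1-k_2-1$ for the pure-$L$ tail.

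The genuine gap is your assertion that the straddling (first) edge must have input $R^pL$, i.e.\ consume exactly one $L$ of the run. You justify this ``by the classification of edge input shapes following \Cref{le:vstupni_slova}'', but that classification only says the outgoing input words form a prefix code of lengths $1,2,\ldots,\ell-1,\ell,\ell$; it does not forbid an edge with input $R^pL^q$ for $q\geq 2$, and \Cref{le:vstup_RLk_1} of \Cref{le:vstup_V_1} explicitly treats such edges. If the straddling edge could consume $u_0\geq 2$ letters $L$, your capacity bound yields only $r \leq u_0 + (k_1-k_2-1)$, which no longer excludes $r=k_1-k_2+1$ and no longer forces the decomposition claimed in part (b). The paper closes exactly this hole by a maximality-plus-replacement argument: it takes $r$ maximal, shows by contradiction (using \Cref{le:6_hrany_do_P2}, \Cref{le:1_hrany_do_P2}, \Cref{le:2_hrany_do_P2}, \Cref{le:7_hrany_do_P2}) that $(Q_0)_{1,1}\leq 2(Q_0)_{1,2}$ --- otherwise the first edge could be replaced by a longer pure-$L$ edge into $Q_0$, prolonging the $L$-run --- and only then invokes \Cref{le:vstup_do_N} to conclude $u_0=1$ and $V=R^p$. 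Without some substitute for that step, your proof does not go through.
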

\begin{proof}
By \Cref{le:3_hrany_do_P2}, and since $j_2 > 0$, we have $(P_2)_{1,2} \neq 0$.
An arbitrary walk of the form
\[
\Twalk{Q}{VL^{r}}{W}{P_2}
\]
in the transducer $\T{}$, where $r \geq 1$ and $Q$ is the first state before reading the start of the run of $L$'s in the input word, can be decomposed in the following way.
\[
\Twalk{Q}{VL^{r}}{W}{P_2} = \Tedge{Q}{VL^{u_0}}{W_0}{
\Tedge{Q_0}{L^{u_1}}{W_1}{Q_1} \cdots \Tedge{Q_{m-1}}{L^{u_m}}{W_m}{Q_m}
}
\]
where $Q_m = P_2$, $m \in \N$ and for all $i \in \N, i \leq m$,  $W_i \in \Dj, Q_i \in \DB{}$ and $u_i \geq 1$.

By \Cref{le:3_hrany_do_P2}, we have $(Q_i)_{1,2} \neq 0$ for all $i \in \N, i \leq m$ and $\frac{(Q_i)_{1,1}}{(Q_i)_{1,2}} < \frac{(Q_{i+1})_{1,1}}{(Q_{i+1})_{1,2}} $ for all $i \in \N, i \leq {m-1}$.

Let $r$ be maximal possible such that there exists the walk of the form $\Twalk{Q}{VL^{r}}{W}{P_2}$.
\Cref{le:7_hrany_do_P2} shows that there is an incoming edge of the state $Q$, which has suffix of its input word equal to $L$.
Therefore, $V$ has suffix $R$.
We know that $u_0 \geq 1$ and therefore by \Cref{le:stejna_pismena}, $W_0$ has prefix $L$. Now $u_0 = 1$ holds or by \Cref{le:vstup_RLk_1} of \Cref{le:vstup_V_1}, we have $W_0 = LR^s$ for some $s \geq 1$.

We prove by contradiction that $(Q_0)_{1,1} \leq 2 (Q_0)_{1,2}$.
We suppose that $(Q_0)_{1,1} > 2 (Q_0)_{1,2}$. It follows from \Cref{le:6_hrany_do_P2} that there exists some edge  $\Tedge{J}{L^t}{L^{\ell}}{Q_0}$ where $t, \ell \geq 1$. By \Cref{le:vstup_L_1} of \Cref{le:vstup_V_1} either $\ell = 1$ or $t=1$.
By \Cref{le:1_hrany_do_P2} or by \Cref{le:2_hrany_do_P2} or because $t \geq 1$, we have $t \geq u_0$.
By \Cref{le:7_hrany_do_P2}, there is an edge, which ends in the state $J$ and has input word with suffix $L$.
This is a contradiction with the maximality of $r$.

Therefore, we have $(Q_0)_{1,1} \leq 2 (Q_0)_{1,2}$, which by \Cref{le:vstup_do_N} means that $u_0 =1$ and $V = R^p$ for some $p \geq 1$.

Let $i \geq 1$, $i \leq m$. Further, by \Cref{le:vstup_L_1} of \Cref{le:vstup_V_1} and \Cref{le:3_hrany_do_P2}, we have
\[
(Q_{i-1})_{1,1} \leq (Q_i)_{1,1} - u_i (Q_i)_{1,2}
\]
and
\[
(Q_{i-1})_{1,2} \geq (Q_i)_{1,2}.
\]

Because $Q_0 \in \DB{}$, we have $(Q_0)_{1,1} >  (Q_0)_{1,2}$. It means that
\[
(Q_m)_{1,1} - (r-1) (Q_m)_{1,2} = (Q_m)_{1,1} - (u_1 + u_2 + \dots u_m) (Q_m)_{1,2} \geq(Q_0)_{1,1} >  (Q_0)_{1,2} \geq  (Q_m)_{1,2}.
\]

At the same time, we have according to \Cref{le:5_hrany_do_P2} that
\[
(Q_m)_{1,1}-(k_1-k_2)(Q_m)_{1,2}<(Q_m)_{1,2}.
\]
Because $(Q_m)_{1,2} \geq 1$, we have $r-1 < k_1 - k_2$ and the proposition holds.
\end{proof}

The symmetry of $\T{}$ can be used for the following observation about the output words.

\begin{lemma} \label{le:vystupy_ruzne}
Let $\Tedge{M_1}{V_1}{W_1}{P}$ and $\Tedge{M_2}{V_2}{W_2}{P}$ for some $M_1,M_2,P \in \DB{}$, $V_1,V_2,W_1,W_2 \in \Dj$ be two different edges in the transducer $\T{}$.
The word $W_1$ is not a suffix of $W_2$ or vice versa.
\end{lemma}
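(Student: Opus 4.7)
The plan is to exploit the symmetry of $\T{}$ expressed in \Cref{prop:assoc_sym} to turn the two incoming edges at $P$ into two outgoing edges at $P^T$, and then invoke the prefix-code property of input words of outgoing edges (immediately following \Cref{thm:Raney_edge}).

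More concretely, I will first apply \Cref{prop:assoc_sym} to each of the two hypothesized edges: from $\Tedge{M_1}{V_1}{W_1}{P}$ we obtain $\Tedge{P^T}{W_1^T}{V_1^T}{M_1^T}$, and from $\Tedge{M_2}{V_2}{W_2}{P}$ we obtain $\Tedge{P^T}{W_2^T}{V_2^T}{M_2^T}$. Both of these are outgoing edges of the single state $P^T$. Next I will verify that these two transposed edges are still distinct: if they coincided, then in particular $W_1^T=W_2^T$ and $M_1^T=M_2^T$ as well as $V_1^T=V_2^T$, hence $W_1=W_2$, $M_1=M_2$ and $V_1=V_2$, which contradicts the assumption that $\Tedge{M_1}{V_1}{W_1}{P}$ and $\Tedge{M_2}{V_2}{W_2}{P}$ are different. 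Alternatively one may observe that given an outgoing edge of $P^T$ with input $W_i^T$, the ending state and output word are determined uniquely by \Cref{thm:Raney_edge}, so distinct outgoing edges from $P^T$ have distinct input words.

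Now I will apply the consequence of \Cref{thm:Raney_edge} already recorded in the paragraph just after it: the set of input words on outgoing edges of any fixed state of $\T{}$ is a prefix code. Applied to the state $P^T$, this yields that $W_1^T$ is not a prefix of $W_2^T$ and $W_2^T$ is not a prefix of $W_1^T$. Transposing back (a prefix of the transpose is the transpose of a suffix), this is exactly the assertion that $W_1$ is not a suffix of $W_2$ and $W_2$ is not a suffix of $W_1$, completing the proof.

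Because the symmetry and the prefix-code property do all the work, there is no real obstacle here; the only subtlety is making sure that ``different edges'' is preserved under the symmetry, which is immediate once one notes the uniqueness part of \Cref{thm:Raney_edge}.
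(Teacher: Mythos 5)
Your proposal is correct and takes essentially the same route as the paper: apply \Cref{prop:assoc_sym} to convert the two incoming edges at $P$ into two outgoing edges at $P^T$, and then invoke the prefix-code property of input words from \Cref{thm:Raney_edge}. You merely spell out two details the paper leaves implicit (distinctness is preserved under the symmetry, and the prefix/suffix relationship under transposition).
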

\begin{proof}
By \Cref{prop:assoc_sym}, the edges $\Tedge{P^T}{W_1^T}{V_1^T}{M_1^T}$ and $\Tedge{P^T}{W_2^T}{V_2^T}{M_2^T}$ exist. The claim follows from \Cref{thm:Raney_edge}.
\end{proof}

We may now proceed with a proof of \Cref{co:vyfukovani_ze_symetrickeho}.
First, we recall its statement:

\restatableVyfukovani*

\begin{proof}[Proof of \Cref{co:vyfukovani_ze_symetrickeho}]
Let $(p_i)_{i=1}^{g}$ be the sequence of all the transitions taken on the walk $\Twalk{M}{V}{W}{M}$, ordered as they appear on this walk.
We shall transform each of the transition $p_i$ into a new walk $\widehat{p}_i$ having the same starting state and ending state as $p_i$.
Doing that, we shall produce a new walk from $M$ to $M$ given by the sequence $(\widehat{p}_i)_{i=1}^{g}$.

Let $U_iQ_i^{j_i}$ be the input word of the transition $p_i$ with $Q_i \in \{L,R\}$, $j_i > 0$, and $U_i$ empty or ending in a letter distinct from $Q_i$.
If $U_i$ is not empty, $F_i$ denotes the first letter of $U_i$.

We proceed from $i = g$ to $i=1$ and replace $p_i$ with $\widehat{p}_i$ using the following rules. In the case $i = 1$, we define $i-1 = g$.
The rules use \Cref{coro:kappa_hrana} or its symmetric version and construct walks $\widehat{p}_i$ from the walks given by an appropriate item of \Cref{coro:kappa_hrana}:
\begin{enumerate}[I.]
  \item if $U_i$ is empty and $Q_i \neq Q_{i-1}$, then apply \cref{it:kappa_hrana_prazdneV} of \Cref{coro:kappa_hrana} if $Q_i = R$ or its symmetric version otherwise; \label{it:nafuk_krok_prazdne}
  \item if $U_i$ is empty and $Q_i = Q_{i-1}$, then $\widehat{p}_i = p_i$; \label{it:nafuk_krok_kopie}
  \item if $U_i$ is not empty and $F_i = Q_{i-1}$, then apply \cref{it:kappa_hrana_nic} of \Cref{coro:kappa_hrana} if $Q_i = R$ or its symmetric version otherwise;
  \item if $U_i$ is not empty and $F_i \neq Q_{i-1}$, then
  \begin{enumerate}[label=\Roman{enumi}\alph*.]
    \item if $Q_{i-1} = L$ and $Q_i = R$, apply \cref{it:kappa_hrana_L} of \Cref{coro:kappa_hrana};
    \item if $Q_{i-1} = R$ and $Q_i = R$, apply \cref{it:kappa_hrana_R} of \Cref{coro:kappa_hrana};
    \item if $Q_{i-1} = R$ and $Q_i = L$, apply \cref{it:kappa_hrana_L} of symmetric version of \Cref{coro:kappa_hrana};
    \item if $Q_{i-1} = L$ and $Q_i = L$, apply \cref{it:kappa_hrana_R} of symmetric version of \Cref{coro:kappa_hrana}.
  \end{enumerate}
\end{enumerate}

Let $\Twalk{M}{U}{W_U}{M}$ be the walk composed of the walks $\widehat{p}_1,\widehat{p}_2,\ldots,\widehat{p}_g$.
It follows from the above construction that all the runs in $U$, except for the last run, are of length at least $4n$ and $Q_g^{4n}$ is a prefix of $U$.
As the new walks $\widehat{p}_i$ are given by \Cref{coro:kappa_hrana} or kept the same, the number of runs in the output words is either always strictly increased or the output word is the same.
Since $\sigma(\widehat{A_1}) > \sigma(A_1)$ and $\sigma(\widehat{A_2}) > \sigma(A_2)$ imply $\sigma(\widehat{A_1}\widehat{A_2}) > \sigma(A_1A_2)$ for all $A_1, A_2, \widehat{A_1}, \widehat{A_2} \in \Dj$, we conclude that using the above rules, the number $\sigma(W_U)$ may not be less than $\sigma(W)$.
We conclude that $\sigmaC(W_U) \geq \sigmaC(W)$.

We shall now repeatedly apply \Cref{le:vyfouknuti_n} to the walk $\Twalk{M}{U}{W_U}{M}$ to decrease the length of most of the runs in the input word between $4n$ and $5n-1$, without changing the output words except for decreasing lengths of some of their runs.
We end up with a walk
\[
\Twalk{M}{Q_g^{e}U'Q_g^{f}}{W_U'}{M}
\]
where $\sigma(W_U) = \sigma(W_U')$, $U' = \kappa(U')$ and $U'$ starts and ends with the letter distinct from $Q_g$.
It remains to deal with the first and the last run, which are both runs of the same letter $Q_g$.
In order to do that, we shift the start and the end of the closed walk $\Twalk{M}{Q_g^{e}U'Q_g^{f}}{W_U'}{M}$ to another state, denoted by $\widehat{M}$, on this closed walk such that the run $Q_g^{e+f}$ is inside the input word.
This is possible due to the fact that $U'$ contains a run of length at least $4n$ of the letter distinct from $Q$.
Let $W'$ be the output word of this shifted closed walk.
By the definition, we have $\sigmaC(W') = \sigmaC(W_U')$.
We now apply \Cref{le:vyfouknuti_n} one last time to reduce the length of the run $Q_g^{e+f}$ in the input word of the shifted closed walk.
We obtain a new output word $\widehat{W}$ which satisfies
\[
\sigmaC(\widehat{W}) = \sigmaC(W') = \sigmaC(W_U') = \sigmaC(W_U) \geq \sigmaC(W).
\]
As the input word of this closed walk belongs to $\tau_\kappa(V)$, the first part of the proof is finished.

Now, we prove \Cref{it:1)vyfukovani,it:2)vyfukovani}.
The proofs of the two claims of \Cref{it:1)vyfukovani,it:2)vyfukovani} are very similar and therefore we prove them together.
In what follows, $\C$ denotes the closed walk $\Twalk{M}{V}{W}{M}$ and $\widehat{\C}$ the closed walk $\Twalk{\widehat{M}}{\widehat{V}}{\widehat{W}}{\widehat{M}}$.

The step \ref{it:nafuk_krok_kopie} of the algorithm in the first part of the proof may not be applied to all of the edges $p_i$.
In other words, there is an edge $p_i$ such that \Cref{coro:kappa_hrana} is applied to it in the algorithm.
Moreover, we may assume that such edge satisfies $p_i = \Tedge{S}{V_3R^j}{\bullet}{T}$ where $V_3$ does not have suffix $R$.
It follows from \Cref{le:max_n_jednoho_pismene} that $j \leq n$ and if $V_3$ is empty, then we are in the step \ref{it:nafuk_krok_prazdne}

Let $X \in \{\varepsilon, L^{4n}, R^{4n}\}$.
The application of \Cref{coro:kappa_hrana} to $p_i$ produces a walk $\widehat{p}_i = \Twalk{S}{X\kappa(V_3)R^j}{\bullet}{T}$ on the closed walk $\Twalk{\widehat{M}}{\widehat{V}}{\widehat{W}}{\widehat{M}}$.
If we change the starting state of this closed walk to $S$, we obtain some walk $\Twalk{S}{B}{\bullet}{S}$.
In the first case (the case \ref{it:1)vyfukovani} of \Cref{co:vyfukovani_ze_symetrickeho}), we have $\Twalk{S}{B}{\bullet}{S} = (\Twalk{S}{B_1}{\bullet}{S})^{m_1}$ for $B_1^{m_1} = B$ so there is a walk $\widehat{p}_q$, $i \neq q$ and $\widehat{p}_i = \widehat{p}_q$ such that if we change the starting vertex of the closed walk $\Twalk{S}{B}{\bullet}{S}$ to the starting vertex of $\widehat{p}_q$, we obtain again the closed walk $\Twalk{S}{B}{\bullet}{S}$. In the second case (the case \ref{it:2)vyfukovani} of \Cref{co:vyfukovani_ze_symetrickeho}), the symmetricity of the walk $\Twalk{S}{B}{\bullet}{S}$ implies that we may also change the starting vertex to $\assoc{S}$, and the closed walk is $\Twalk{\assoc{S}}{\assoc{B}}{\bullet}{\assoc{S}}$ where the first walk is $\widehat{p}_q = \assoc{\widehat{p}_i} = \Twalk{\assoc{S}}{\assoc{X\kappa(V_3)}L^j}{\bullet}{\assoc{T}}$. Because we are investigating both cases together, we put $T_0 = T$ for the first case and $T_0 = \assoc{T}$ for the second case.

We shall now investigate the edge $p_{\ell}$ on the original closed walk, which, after application of the algorithm in the first part of this proof, produced the start of the reading of the last run in the input word of $\widehat{p}_q$. This last run is denoted $E^j$, where $E = R$ for the first case and $E = L$ for the second case.

We are again sure that \Cref{coro:kappa_hrana} is applied to $p_{\ell}$ since we are tracking a start of a run.

\begin{enumerate}[A)]
\item \label{it:A)vyfukovani}
 $p_\ell = \Tedge{S_1}{ZE^k}{\bullet}{T_1}$ and $\widehat{p_\ell} = \Twalk{S_1}{X'\kappa(Z)E^k}{\bullet}{T_1}$ with $k$ maximal possible, $Z \in \Dj$ and $X' \in \{\varepsilon, L^{4n}, R^{4n}\}$.

If $k=j$, then $T_1 =T_0$. Since for the first case $V = (V_1)^{m_1}$, we arrive in the closed walk $\C$ at the state $T_0= T$ at least two times with the same input and therefore $\C = (\C_1)^{m_2}$ for some $m_2 \geq 2$, which means that $\Twalk{M}{V}{W}{M}=(\Twalk{M}{V_2}{W_2}{M})^{m_2}$.
In the second case, we have a similar situation.
Since $V = V_1\assoc{V_1}$, i.e., the original input word is symmetric itself, we arrive in the closed walk ${\C}$ at the state $\assoc{T}$ with the input word, which is symmetric to the input word after the edge $p_i$, and so $\Twalk{M}{V}{W}{M}$ is symmetric.

If $k < j$, then the walk $\widehat{p}_q$ ends with the walk $\Twalk{T_1}{E^{j-k}}{\bullet}{T_0}$.
The walk $\Twalk{T_1}{E^{j-k}}{\bullet}{T_0}$ is taken after $p_\ell$.
Thus, we arrive in the closed walk $\C$ at the state $T_0$ with the input word, which is either the same (in the first case) or symmetric (in the second case) to the input word after the walk $p_i$, and so either $\Twalk{M}{V}{W}{M}=(\Twalk{M}{V_2}{W_2}{M})^{m_2}$ for some $m_2 \geq 2$ or $\Twalk{M}{V}{W}{M}$ is symmetric, respectively.

If $k > j$, then the walk $\widehat{p}_\ell$ ends with the walk $\Twalk{T_0}{E^{k-j}}{\bullet}{T_1}$.
Thus, the walk $\Twalk{T}{R^{k-j}}{\bullet}{T_2}$, where $T_2 = T_1$ in the first case and $T_2 = \assoc{T_1} $ in the second case, exists.
Therefore, as the observed run of $R$'s in $p_i$ is of length at least $k$, it is followed by $\Twalk{T}{R^{k-j}}{\bullet}{T_2}$.
Again, we either arrive in the closed walk $\C$ two times at the state $T_1$ with the same input word or we find two states, $\assoc{T_1}$ and $T_1$ that have symmetric input words, and so either $\Twalk{M}{V}{W}{M}=(\Twalk{M}{V_2}{W_2}{M})^{m_2}$ or $\Twalk{M}{V}{W}{M}$ is symmetric, respectively.

\begin{figure}
\centering
\begin{tikzpicture}[small/.style={fontscale=-1}]
  \matrix (m) [matrix of math nodes,row sep=3em,column sep=4.4em,minimum width=2em]
  {
     S_1  & \phantom{N}  &  \phantom{N} &  \phantom{N} & \phantom{N} & \phantom{N}  & T_1 \\
     \phantom{D}   &  D_{f} & D_{g+1} & D_g & D_{g-1}  &  D_0 &  \\};

% top
\path (m-1-6.center) edge[->] node[above]{$Y^{r_1}$} (m-1-7);
% \path (m-1-5.center) edge[-] node[above]{$\assoc{Y}^{i_0}$} (m-1-6.center);
\draw[] (m-1-5.center) [-] -- ($(m-1-5)!0.25!(m-1-6)$) ($(m-1-5)!0.25!(m-1-6)$) edge[dotted] ($(m-1-5)!0.75!(m-1-6)$) ($(m-1-5)!0.75!(m-1-6)$) edge[-] node[above]{$\assoc{Y}^{i_0}$} (m-1-6.center);
\path (m-1-4.center) edge[-] node[above]{$\assoc{E}^{i_{g-1}}$} (m-1-5.center);
\path (m-1-3.center) edge[-] node[above]{$E^k$} (m-1-4.center);
% \path[path fading=fade up,draw=black] (m-1-3.center) -- (m-1-4.center);
\draw[] (m-1-2.center) [-] -- ($(m-1-2)!0.25!(m-1-3)$) ($(m-1-2)!0.25!(m-1-3)$) edge[dotted] ($(m-1-2)!0.75!(m-1-3)$) ($(m-1-2)!0.75!(m-1-3)$) edge[-] (m-1-3.center);
\path (m-1-1) edge[-] node[above]{$P^{i_{f}}$} (m-1-2.center);

%vertical
\path (m-1-6.center) edge[->] node[left]{$\assoc{Y}^{t_0}$} (m-2-6);
\path (m-1-5.center) edge[->] node[left]{$\assoc{E}^{t_{g-1}}$} (m-2-5);
\path (m-1-4.center) edge[->] node[left]{$E^{t_g}$} (m-2-4);
\path (m-1-3.center) edge[->] node[left]{$\assoc{E}^{t_{g+1}}$} (m-2-3);

\path (m-1-2.center) edge[->] node[left]{$P^{t_f}$} (m-2-2);

%bottom
\path (m-2-6) edge[->>,out=0,in=-90] node[below,yshift=-0.7em]{$\assoc{Y}^{4n-t_0}Y^{r_1}$} (m-1-7);
% \path (m-2-5) edge[->>] node[above,small]{$R^{4n-t_1}L^{i_0+t_0}$} (m-2-6);
\draw[] (m-2-5) edge[-] node[above,small,xshift=0.5em]{ } ($(m-2-5)!0.25!(m-2-6)$) ($(m-2-5)!0.25!(m-2-6)$) edge [dotted] ($(m-2-5)!0.75!(m-2-6)$) ($(m-2-5)!0.75!(m-2-6)$) edge[->>] node[above,small]{$Y^{4n-t_1}\assoc{Y}^{i_0+t_0}$}  (m-2-6);
\path (m-2-4) edge[->>] node[above,small]{$E^{4n-t_g}\assoc{E}^{i_{g-1}+t_{g-1}}$} (m-2-5);
\path (m-2-3) edge[->>] node[above,small]{$\assoc{E}^{4n-t_{g+1}}E^{k+t_{g}}$} (m-2-4);
\draw[] (m-2-2) edge[-] node[above,small,xshift=0.5em]{ } ($(m-2-2)!0.25!(m-2-3)$) ($(m-2-2)!0.25!(m-2-3)$) edge [dotted] ($(m-2-2)!0.75!(m-2-3)$) ($(m-2-2)!0.75!(m-2-3)$) edge[->>]  (m-2-3);
% $P^{\omega n - t_f}\ldots$

% VERZE 1 - bez D_{l+1}
% \path (m-1-1) edge[->>,out=-90,in=180,dashed] node[below,yshift=-0.7em]{$L^{4n}R^{i_f+t_f}$} (m-2-2);

%VERZE 2 - s D_{l+1}
\node at (m-2-1){$D_{f+1}$};
\path (m-1-1) edge[->,dashed] node[right]{$\assoc{P}^{t_{f+1}}$} (m-2-1);
\path (m-2-1) edge[->>,dashed] node[above,small]{$\assoc{P}^{4n-t_{f+1}}P^{i_f+t_f}$} (m-2-2);

%dashed
% \path ([yshift=0.5em]m-1-1.south east) edge[->,dashed,out=0,in=90] ([xshift=0.7em]m-2-2.north west);
% \path ([yshift=0.5em]m-1-1.south east) .. controls (1,1) .. ([xshift=0.7em]m-2-2.north west);
\draw[rounded corners,dashed] ([yshift=0.5em]m-1-1.south east) -- ([xshift=0.7em,yshift=0.5em]m-1-2.south west) [->] --  ([xshift=0.7em]m-2-2.north west);

% \draw[dashed] ([xshift=0em]m-2-2.south east) -- ([xshift=0em]m-2-6.south east);
% \draw[transform canvas={yshift=-0.3em},dashed] (m-2-2) -- ($(m-2-2)!0.5!(m-2-3)$) [dotted]--  ($(m-2-3)!0.5!(m-2-4)$) [dashed,->>] -- (m-2-4);

%points on top line
\filldraw[fill=white] (m-1-6) circle (2pt);
\filldraw[fill=white] (m-1-5) circle (2pt);
\filldraw[fill=white] (m-1-4) circle (2pt);
\filldraw[fill=white] (m-1-3) circle (2pt);
\filldraw[fill=white] (m-1-2) circle (2pt);

  % \path[-stealth]
  %   (m-1-1) edge node [left] {$\mathcal{B}_X$} (m-2-1)
  %           edge [double] node [below] {$\mathcal{B}_t$} (m-1-2)
  %   (m-2-1.east|-m-2-2) edge node [below] {$\mathcal{B}_T$}
  %           node [above] {$\exists$} (m-2-2)
  %   (m-1-2) edge node [right] {$\mathcal{B}_T$} (m-2-2)
  %           edge [dashed,-] (m-2-1);

%proof nodes
\node [label={[label distance=-0.2cm]-90:$T_0$}] (pf1) at ($(m-2-3)!0.7!(m-2-4)$) {};
\draw[mark=triangle*,mark size=2.5pt,mark options={color=red}] plot coordinates {(pf1)};

%\node [label={[label distance=-0.2cm]-90:$T_0$}] (pf2) at ($(m-1-3)!0.55!(m-1-4)$) {};
%\draw[mark=square*,mark size=2.5pt,mark options={color=red}] plot coordinates {(pf2)};

\end{tikzpicture}
\caption{
The situation of \Cref{it:co:vyfukovani_ze_symetrickeho_spor} in the proof of \Cref{co:vyfukovani_ze_symetrickeho}, based on \Cref{fig:nafukovani}.
On the top line we have the edge $p_\ell$, which is transformed using the procedure given by \Cref{coro:kappa_hrana}.
We find an intermediate state $D_g$ connected to the run $E^k$.
We identify the state $T_0$ on the bottom line, the new walk $\widehat{p}_\ell$, marked by a triangle.
}
\label{fig:nafukovani_vyfukovani_dukaz}
\end{figure}
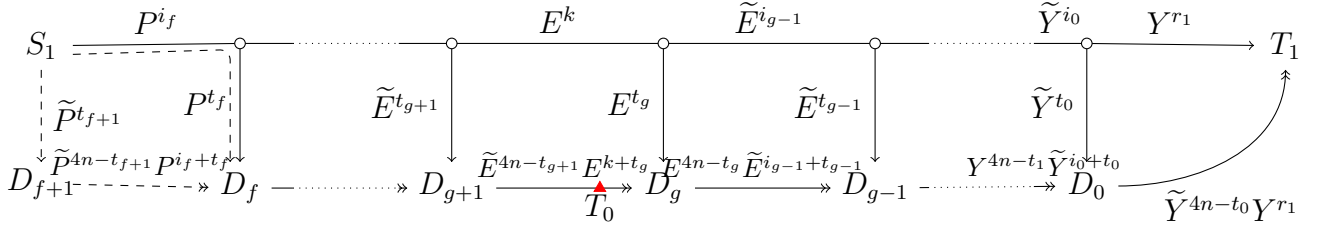

\item \label{it:co:vyfukovani_ze_symetrickeho_spor}

$p_\ell = \Tedge{S_1}{Z_1E^kZ_2Y^{r_1}}{\bullet}{T_1}$, with $Z_1,Z_2 \in \Dj$, $Y \in \left\{ L,R \right\}, r_1 \geq 1$ and $k\geq 1$ maximal possible.
As $V = (V_1)^{m_1}$ or $V = V_1\assoc{V_1}$, we have $k \geq j$.

We have $\widehat{p}_\ell = \Twalk{S_1}{X'\kappa(Z_1E^kZ_2)Y^{r_1}}{\bullet}{T_1}$ where $X' \in \{\varepsilon, L^{4n}, R^{4n}\}$.
As $\Twalk{\widehat{M}}{\widehat{V}}{\widehat{W}}{\widehat{M}} = (\Twalk{\widehat{M}}{\widehat{V_1}}{\widehat{W_1}}{\widehat{M}})^{m_1}$ or is symmetric, we know that after reading $E^j$ in the run $\kappa(E^k)$ we arrive at $T_0$.
The situation is illustrated in \Cref{fig:nafukovani_vyfukovani_dukaz}.
We arrive at the intermediate state $D_g$ by reading $E^{k+t_g}$, where $t_g \geq 1$, which is also read when taking the vertical path to $D_g$. Therefore, $\Tedge{S_1}{Z_1E^{k+t_g}}{W_3}{D_g}$ and $\Twalk{S_1}{Z_1\assoc{E}^{4n}E^{k+t_g}}{W_4}{D_g} = \Twalk{S_1}{Z_1\assoc{E}^{4n}E^j}{\bullet}{ \Twalk{T_0}{E^{k+t_g - j}}{\bullet}{D_g}}$ for some $W_3,W_4 \in \Dj$ and we can find a state $T_2 \in \DB{}$ such that $\Twalk{S_1}{Z_1\assoc{E}^{4n}E^{k+t_g}}{W_4}{D_g} = \Twalk{S_1}{Z_1\assoc{E}^{4n}E^j}{W_5}{\Twalk{T_0}{E^{k+t_g -r - j}}{W_6}{ \Tedge{T_2}{E^r}{W_7}{D_g}}}$ for some $W_5, W_6,W_7\in \Dj$ and $k+t_g-j\geq r \geq 1$.

By \Cref{le:nafouknuti_jedno} or its symmetric version, the word $W_4$ has suffix $E^{-1}W_3$ and because $k+t_g \geq 2$, we have by \Cref{le:vstup_V_1} \Cref{le:vstup_RLk_1} for $Z_1$ nonempty or by \Cref{le:vstup_V_1} \Cref{le:vstup_L_1} otherwise that $W_3 = E\assoc{E}^z$, where $z \geq 0$.
And for $Z_1$ nonempty even $z \geq k+t_g - 1 \geq 1$. So $W_4$ has suffix $\assoc{E^z}$.
Moreover, by \Cref{le:vstup_V_1} \Cref{le:vstup_L_1} or its symmetric version, we have $W_7 = E\assoc{E}^s$ for some $s \geq 0$ or $W_7 = E^t$ for some $t \geq 2$.
If $W_7 = E^t$ for some $t \geq 2$, then $r = 1$ and by \Cref{le:2_hrany_do_P2}, there cannot be the edge $\Tedge{S_1}{Z_1E^{k+t_g}}{W_3}{D_g}$, which is a contradiction. Therefore, $W_7 = E\assoc{E}^s$ for some $s \geq 0$.
Further, we know that $W_7$ is a suffix of $W_4$ and therefore $\assoc{E}^z$ is a suffix of $W_7$. Moreover, by \Cref{le:vystupy_ruzne}, $W_3 \neq W_7$, which means that $s>z$.

We distinguish the two following cases:

\begin{enumerate}[1)]
\item \label{it:B)vyfukovani}
$Z_1$ is not the empty word.

In this case, we have $s \geq z+1 \geq k + t_g \geq k + t_g -j + 1\geq r+1$.
Let $T_2 = \mat{a}{b}{c}{d}$. Using \Cref{le:hrany_do_P} or its symmetric version on the edge $\Tedge{T_2}{E^{r}}{E\assoc{E}^s}{D_g}$, we have $a <2b$ for $E = L$ and $a<2c$ for $E=R$.

Moreover, let $\Tedge{N}{V_8}{W_8}{T_2}$, where $N \in \DB{}, V_8,W_8 \in \Dj$ and $V_8$ has suffix $E$, be an edge in the transducer $\T{}$. By \Cref{le:vstup_do_N} or its symmetric version, we have $V_8 = \assoc{E}^yE$ for some $y \geq 1$.

Therefore, the walk $\Twalk{T_0}{E^{k+t_g -r - j}}{W_6}{T_2}$ is empty and $T_0 = T_2$. Therefore also for every edge $\Tedge{N}{V_8}{W_8}{T_0}$, where $V_8$ has suffix $E$, we have $V_8 = \assoc{E}^yE$. Specially we have $p_i = \Tedge{S}{L^yR}{\bullet}{T}$ for some $y \geq 1$ (in the second case, we have used \Cref{prop:assoc_sym}) which means that $j=1$.

Because $Z_1$ is not empty, we can write $p_\ell = \Tedge{S_1}{Z_3\assoc{E^{i_{g+1}}}E^kZ_2Y^{r_1}}{\bullet}{T_1}$, with $i_{g+1} \geq 1$ maximal possible and $Z_3 \in \Dj$.

Now we can have one of the following situations.

\begin{enumerate}[i)]
\item \label{it:a)vyfukovani}
 The edge $p_{i-1}$ has input word with a suffix $L$ and $Z_3$ is not an empty word.

We find the edge $p_{u}$ on the original closed walk on which starts the reading of the run $L^y$ of the input word of $p_i$ and we apply \Cref{it:co:vyfukovani_ze_symetrickeho_spor} \ref{it:B)vyfukovani} on the edges $p_u$ and $p_{\ell}$.

\item \label{it:b)vyfukovani}
The edge $p_{i-1}$ has input word with a suffix $L$, $Z_3$ is empty and the edge $p_{\ell - 1}$ has input word with a suffix $\assoc{E}$.

In this case, we find the edge $p_u$ on the original closed walk on which starts the reading of the run $L^y$ of the input word of $p_i$ and the edge $p_v$ on the original closed walk on which starts the reading of the run $\assoc{E}^{i_{g+1}}$ of the input word of $p_{\ell}$  and we apply \Cref{it:A)vyfukovani} on these two edges.

\item \label{it:c)vyfukovani}
 The edge $p_{i-1}$ has input word with a suffix $L$, $Z_3$ is empty and the edge $p_{\ell - 1}$ has input word with a suffix $E$.

In this case, we find the edge $p_u$ on the original closed walk on which starts the reading of the run $L^y$ of the input word of $p_i$ and we apply \Cref{it:co:vyfukovani_ze_symetrickeho_spor} \ref{it:C)vyfukovani} on the edges $p_u$ and $p_{\ell}$.

\item \label{it:d)vyfukovani}
The edge $p_{i-1}$ has input word with a suffix $R$ and $Z_3$ has at least two runs.

We find the edge $p_u$ on the original closed walk on which starts the reading of the run of $R$'s, which ends as a suffix of the input word of the edge $p_{i-1}$ and we apply \Cref{it:co:vyfukovani_ze_symetrickeho_spor} \ref{it:B)vyfukovani} on the edges $p_u$ and $p_{\ell}$.

\item \label{it:e)vyfukovani}
The edge $p_{i-1}$ has input word with a suffix $R$, $Z_3 = E^{i_{g+2}}$ and the edge $p_{\ell - 1}$ has input word with a suffix $E$.

We find the edge $p_u$ on the original closed walk on which starts the reading of the run of $R$'s, which ends as a suffix of the input word of the edge $p_{i-1}$ and the edge $p_v$ on the original closed walk on which starts the reading of the run $E^{i_{g+2}}$ of the input word of $p_{\ell}$ and we apply \Cref{it:A)vyfukovani} on the edges $p_u$ and $p_v$.

\item \label{it:f)vyfukovani}
The edge $p_{i-1}$ has input word with a suffix $R$, $Z_3 = E^{i_{g+2}}$ and the edge $p_{\ell - 1}$ has input word with a suffix $\assoc{E}$.

We find the edge $p_u$ on the original closed walk on which starts the reading of the run of $R$'s, which ends as a suffix of the input word of the edge $p_{i-1}$ and we apply \Cref{it:co:vyfukovani_ze_symetrickeho_spor} \ref{it:C)vyfukovani} on the edges $p_u$ and $p_{\ell}$.

\item \label{it:g)vyfukovani}
The edge $p_{i-1}$ has input word with a suffix $R, Z_3$ is empty and the edge $p_{\ell - 1}$ has input word with a suffix $\assoc{E}$.

We find the edge $p_v$ on the original closed walk on which starts the reading of the run $\assoc{E}^{i_{g+1}}$ of the edge $p_\ell$ and we apply \Cref{it:co:vyfukovani_ze_symetrickeho_spor} \ref{it:C)vyfukovani} on the edges $p_v$ and $p_i$.

\item \label{it:h)vyfukovani}
The edge $p_{i-1}$ has input word with a suffix $R$, $Z_3$ is empty and the edge $p_{\ell - 1}$ has input word with a suffix $E$.

We find the edge $p_u$ on the original closed walk on which starts the reading of the run of $R$'s, which ends as a suffix of the input word of the edge $p_{i-1}$ and the edge $p_v$ on the original closed walk on which starts the reading of the run $E$, which ends as a suffix of the input word of the edge $p_{\ell - 1}$ and we apply \Cref{it:A)vyfukovani} on the edges $p_u$ and $p_v$.

\end{enumerate}

Since the word $Z_3$ is finite, we are sure that after a finite number of applications of \Cref{it:B)vyfukovani}, one of the possibilities \ref{it:b)vyfukovani},\ref{it:c)vyfukovani},\ref{it:e)vyfukovani},\ref{it:f)vyfukovani},\ref{it:g)vyfukovani} or \ref{it:h)vyfukovani} occurs.

\item \label{it:C)vyfukovani}
$Z_1$ is the empty word.

In this case, we have $\Tedge{S_1}{Z_1E^{k+t_g}}{W_3}{D_g} = \Tedge{S_1}{E^{k+t_g}}{E\assoc{E}^z}{D_g}$ and $\Tedge{T_2}{E^{r}}{W_7}{D_g} = \Tedge{T_2}{E^{r}}{E\assoc{E}^s}{D_g}$, where $s>z$.
Using \Cref{prop:hrany_do_P2} or its symmetric version on the edges $\Tedge{S_1}{E^{k+t_g}}{E\assoc{E}^z}{D_g}$ and
$\Tedge{T_2}{E^r}{E\assoc{E}^s}{D_g}$, we get that for every walk of the form $\Twalk{P}{V_8E^{k+t_g-r}}{W_8}{T_2}$ for some $P \in \DB{}$ and $V_8,W_8 \in \Dj$ we have $\Twalk{P}{V_8E^{k+t_g-r}}{W_8}{T_2} = \Tedge{P}{\assoc{E}^vE}{W_9}{\Twalk{N_1}{E^{k+t_g-r-1}}{W_{10}}{T_2}}$ for some $N_1 \in \DB{}$ and $v \geq 1$.
It holds specially for every such walk which end with the walk $\Twalk{T_0}{E^{k+t_g -r - j}}{W_6}{T_2}$ and therefore in the case \Cref{it:1)vyfukovani} we have ($T = T_0$, $E = R$):
\[
 \Tedge{S}{V_3R^j}{\bullet}{ \Twalk{T}{R^{k+t_g -r - j}}{W_6}{T_2}}=   \Tedge{S}{L^vR}{W_9}{ \Twalk{T}{R^{k+t_g -r - 1}}{W_6}{T_2}}
\]
 and in the case \Cref{it:2)vyfukovani}, we have ($T_0 = \assoc{T}, E = L$):
\[
 \Tedge{\assoc{S}}{\assoc{V_3}L^j}{\bullet}{  \Twalk{\assoc{T}}{L^{k+t_g -r - j}}{W_6}{T_2}} = \Tedge{\assoc{S}}{R^vL}{W_9}{ \Twalk{\assoc{T}}{L^{k+t_g-r-1}}{W_{10}}{T_2}}.
\]
Therefore, (in the second case by \Cref{prop:assoc_sym}) we have $j = 1$ and $p_i = \Tedge{S}{L^vR}{\bullet}{T}$ for some $v \geq 1$.

Now we can have two situations.

\begin{enumerate}[i)]
\item The edge $p_{i-1}$ has  $L$ as a suffix of its input word.

We find the edge $p_{u_1}$ on the original closed walk on which starts the reading of the run of $L$'s, which ends on the edge $p_{i}$ and the edge $p_{u_2}$ on the original closed walk on which starts the reading of the run $\assoc{E}$, which ends as a suffix of the input word of the edge $p_{\ell - 1}$ and we apply \Cref{it:A)vyfukovani} on the edges $p_{u_1}$ and $p_{u_2}$.

\item The edge $p_{i-1}$ has  $R$ as a suffix of its input word.

We can find the edge $p_{u_2}$ on the original closed walk on which starts the reading of the run $\assoc{E}$, which ends as a suffix of the input word of the edge $p_{\ell - 1}$ and we apply \Cref{it:co:vyfukovani_ze_symetrickeho_spor} \ref{it:C)vyfukovani} on the edges $p_{u_2}$ and $p_i$.

\end{enumerate}
Now we can see that there either exist two edges $p_{u_1}, p_{u_2}$ on which we can apply \Cref{it:A)vyfukovani} or that for all $w$ we have $p_{i_w}= \Tedge{M_{i_w}}{L^{q_{i_w}}R}{\bullet}{N_{i_w}}$ or $p_{i_w}= \Tedge{M_{i_w}}{R^{q_{i_w}}}{\bullet}{N_{i_w}}$  and $p_{\ell_w}= \Tedge{M_{\ell_w}}{{E}^{q_{\ell_w}}\assoc{E}}{\bullet}{N_{\ell_w}}$ or $p_{\ell_w}= \Tedge{M_{\ell_w}}{\assoc{E}^{q_{\ell_w}}}{\bullet}{N_{\ell_w}}$, where $q_{i_w},q_{\ell_w} \geq 1$ and $ i_w \equiv i-w \mod g, \ell_w \equiv \ell-w \mod g$ where $g$ is such that $\C = (p_i)_{i= 1}^{g}$.

For the final part of the proof, we split the cases according to \Cref{it:1)vyfukovani,it:2)vyfukovani} of the statement.
\begin{enumerate}[(a)]
\item
$p_{i_w}= \Tedge{M_{i_w}}{L^{q_{i_w}}R}{\bullet}{N_{i_w}}$ or $p_{i_w}= \Tedge{M_{i_w}}{R^{q_{i_w}}}{\bullet}{N_{i_w}}$ and $p_{\ell_w}= \Tedge{M_{\ell_w}}{{R}^{q_{\ell_w}}L}{\bullet}{N_{\ell_w}}$ or $p_{\ell_w}= \Tedge{M_{\ell_w}}{L^{q_{\ell_w}}}{\bullet}{N_{\ell_w}}$, for all $w$ and for $q_{i_w}, q_{\ell_w} \geq 1$, which is a contradiction with $\C = (p_{i_w})_{w= 1}^{g} = (p_{\ell_w})_{w= 1}^{g}$.
\item
$p_{i_w}= \Tedge{M_{i_w}}{L^{q_{i_w}}R}{\bullet}{N_{i_w}}$ or $p_{i_w}= \Tedge{M_{i_w}}{R^{q_{i_w}}}{\bullet}{N_{i_w}}$ and $p_{\ell_w}= \Tedge{M_{\ell_w}}{{L}^{q_{\ell_w}}R}{\bullet}{N_{\ell_w}}$ or $p_{\ell_w}= \Tedge{M_{\ell_w}}{R^{q_{\ell_w}}}{\bullet}{N_{\ell_w}}$ for all $w$. It means that all the input words of the edges in the closed walk $\C $ are either $L^{q_{i_w}}R$ or $R^{q_{i_w}}$ for some $i_w$. Moreover, the input word of the closed walk $\C$ includes at least one run of $L$'s and one run of $R$'s. Therefore, at least one of the input words is $L^{q_{i_w}}R$ for some $i_w \geq 1$.
By \Cref{le:hrany_v_cyklu}, this is in contradiction with the fact that $\C$ is a closed walk. \qedhere
\end{enumerate}
\end{enumerate}
\end{enumerate}
\end{proof}

\subsection{The upper bound and the proof of \Cref{thm:main}}
% \todo[inline]{
%   Momentalni smysl znaceni

%   bez niceho - cisty vstup, mozna primitivni koren vstupniho slova

%   se striskou - nafouknute \Cref{co:vyfukovani_ze_symetrickeho}

%   se striskou a nulou v indexu - primitivni koren v nafouklem cyklu
% }

%the following command introduced to be used in the claims to match the notation
\newcommand\Lf[1]{\widehat{#1}}
\newcommand\Lfz[1]{\Lf{#1}_0}

We need one more claim, which is a corollary of a well-known result due to the Fine and Wilf \cite{fine_wilf}.

\begin{theorem}[Fine and Wilf's theorem] \label{the:fine_wilf}
If a word $V$ has periods $p$ and $q$ and has length at least $p+q - \gcd(p,q)$, then $V$ has also period $\gcd(p,q)$.
\end{theorem}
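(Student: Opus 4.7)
The plan is to prove Fine and Wilf's theorem by strong induction on $p+q$, emulating the Euclidean algorithm, since the length hypothesis $|V| \ge p+q-\gcd(p,q)$ is precisely tailored to such a descent. Without loss of generality I assume $p \le q$ and set $d = \gcd(p,q)$. The base case $p = q$ is immediate, since then $d = p$ is already a period. For the inductive step with $p < q$, the strategy is to first establish that $q - p$ is also a period of $V$, and then apply the inductive hypothesis to the pair $(p, q-p)$: one has $\gcd(p, q-p) = d$, the required length bound $|V| \ge p + (q-p) - d = q - d$ follows from $|V| \ge p+q-d \ge q - d$, and the sum of periods has strictly decreased, so the recursion is well founded.

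The heart of the argument is therefore to show that $q-p$ is a period. I would fix an index $i$ with $1 \le i \le |V|-(q-p)$ and consider two natural cases. First, if $i + q \le |V|$, then period $q$ gives $V[i] = V[i+q]$, while period $p$ applied at position $i+q-p$ gives $V[i+q-p] = V[i+q]$, yielding $V[i] = V[i+q-p]$. Symmetrically, if $i \ge p+1$, then period $p$ gives $V[i] = V[i-p]$, and period $q$ applied at $i-p$ gives $V[i-p] = V[i-p+q]$. Together these two cases cover every index $i$ whenever $|V| \ge p+q$, so in that range the theorem is immediate.

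The remaining obstacle is the narrow regime $p+q-d \le |V| < p+q$, in which the indices $|V|-q < i \le p$ escape both direct arguments. I would address this by exploiting the sharper length hypothesis: picking a Bezout representation $d = \alpha p - \beta q$ with $\alpha, \beta \ge 1$, one can build a path from any missed index to a neighbouring easy index through intermediate positions of the form $i + k_1 p - k_2 q$, each of which can be forced to lie in $[1,|V|]$ precisely because $|V| \ge p+q-d$. Applying periods $p$ and $q$ alternately along this path transports the equality $V[i] = V[i+q-p]$ to the boundary indices, after which $q-p$ is a period of $V$ on its full valid range, the induction closes, and the theorem follows. I expect this boundary connectivity step, though elementary, to be the genuinely delicate part of the argument, since it is the only place where the precise strength of the hypothesis $|V| \ge p+q-d$ (rather than merely $|V| \ge p+q$) is actually used.
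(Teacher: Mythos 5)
The paper does not prove this statement; it is quoted as the classical Fine--Wilf theorem with a citation to \cite{fine_wilf}, so there is no in-paper proof to compare against, and I assess your argument on its own terms.

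Your Euclidean descent (induction on $p+q$, WLOG $p<q$, reduce to the pair $(p,q-p)$) is a standard and sound route to Fine--Wilf, and your two easy cases ($i+q\le|V|$ and $i\ge p+1$) are correct. The genuine gap is exactly the step you flag: in the tight regime $p+q-d\le|V|<p+q$ you must show $q-p$ is a period of the \emph{whole} word $V$, and the claim that a Bezout chain through positions of the form $i+k_1p-k_2q$ ``can be forced to lie in $[1,|V|]$'' is the whole content of the non-trivial Euclidean connectivity lemma, not something that follows from the length bound by inspection. Indeed, from a missed index $i\in(|V|-q,\,p]$ the only admissible single move is $+p$, and from $i+q-p\in(|V|-p,\,q]$ the only admissible single move is $-p$, so any connecting chain must repeatedly bounce between the two boundary regions; proving such a chain exists and stays in range needs a careful argument (essentially that the graph on $\{1,\dots,p+q-d\}$ with edges of length $p$ and $q$ has exactly $d$ components, each a residue class mod $d$), which you do not supply. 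As written, the proof is incomplete at its crux.

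There is a small repair that stays entirely within your plan and removes the need for any chain. Instead of proving $q-p$ is a period of $V$, prove it is a period of the prefix $V'=V[1..|V|-p]$. For $1\le i\le |V'|-(q-p)=|V|-q$ one has $V'[i]=V[i]=V[i+q]=V[i+q-p]=V'[i+q-p]$ using period $q$ and then period $p$, and all indices stay in range --- this is precisely your Case 1, whose range of validity happens to be exactly what $V'$ requires, so there is no boundary gap. Since $q-p\ge d$ (as $d\mid q-p>0$) one gets $|V'|=|V|-p\ge q-d=p+(q-p)-\gcd(p,q-p)$, and $p+(q-p)=q<p+q$, so the inductive hypothesis applies to $V'$ with periods $p$ and $q-p$ and yields period $d$ on $V'$. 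Finally $d\mid p$, so for $i>|V|-p$ one has $V[i]=V[i-p]$ with $i-p\le|V'|$ and $i-p\equiv i\pmod d$, and the period $d$ propagates from $V'$ to all of $V$. This is the same Euclidean induction you proposed, applied to a subword rather than to $V$ itself, and it closes cleanly.
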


We use this theorem in the following form.

\begin{corollary} \label{le:mocniny_slovo}
Let $\widehat{V_0} ,V_0 \in \Dj$, $V_0 \neq V_1^k$ for some $V_1 \in \Dj$ and $k \geq 2$. If
\[
\widehat{V_0}^i = V_0^j
\]
for some $i,j \geq 1$ then $\frac{j}{i} \in \N$.
\end{corollary}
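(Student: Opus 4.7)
The plan is to translate the identity via the isomorphism $\mu$ of \Cref{thm:jednoznacnost_v_D1} into an equation of words over $\{L,R\}$ and then apply \Cref{the:fine_wilf}. Let $p = |\widehat{V_0}|$ and $q = |V_0|$, set $d = \gcd(p,q)$, and write $W$ for the common word $\widehat{V_0}^i = V_0^j$, which has length $ip = jq$. Since $ip = jq$ gives $j/i = p/q$, the statement $j/i \in \N$ is equivalent to $q \mid p$, and this is what I aim to establish.

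The core step is to show that $W$ has period $d$. The word $W$ already has periods $p$ and $q$ by construction, so by \Cref{the:fine_wilf} it suffices to verify that $|W| \geq p + q - d$. I would write $p = da$ and $q = db$ with $\gcd(a,b) = 1$; the equation $ip = jq$ then forces $ia = jb$, and hence $i = sb$, $j = sa$ for some integer $s \geq 1$. Thus $|W| = s\,dab \geq dab$, and the trivial inequality $(a-1)(b-1) \geq 0$ rearranges to $ab \geq a+b-1$, yielding $|W| \geq d(a+b-1) = p + q - d$, as required.

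Having established that $W$ has period $d$, the argument concludes using the primitivity assumption on $V_0$. Since $V_0$ is the prefix of $W$ of length $q$ and $d$ divides $q$, it must coincide with $U^{q/d}$, where $U$ is its own prefix of length $d$; via $\mu$, this $U$ corresponds to an element of $\Dj$. The hypothesis that $V_0$ is not of the form $V_1^k$ for $V_1 \in \Dj$ and $k \geq 2$ therefore forces $q/d = 1$, i.e.\ $d = q$, which is precisely $q \mid p$.

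I do not foresee a genuine obstacle; the only point that calls for a bit of care is verifying the length hypothesis in Fine--Wilf, and that is where the parameterisation $i = sb$, $j = sa$ is useful.
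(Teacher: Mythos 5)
Your proof is correct and follows essentially the same route as the paper: both reduce to showing the common word $W$ (the paper calls it $V$) has period $\gcd(p,q)$ via Fine--Wilf, using the same length estimate $|W|\geq\gcd(p,q)\cdot\widehat{p}\widehat{q}\geq p+q-\gcd(p,q)$, and then invoke primitivity of $V_0$ to conclude $q=\gcd(p,q)$. Your explicit parameterisation $i=sb$, $j=sa$ just makes the step $|W|\geq dab$ more transparent than the paper's terse ``$m\geq\ell\widehat{p}\widehat{q}$'', but it is the same argument.
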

\begin{proof}
In this proof, $|V|$ denotes the length of the word $V \in \Dj$.

Let $V = \widehat{V_0}^i = V_0^j$, $m = |V|$ and $p = \frac{m}{i} = |\widehat{V_0}|, q = \frac{m}{j} = |V_0|$. Therefore, the word $V$ has periods $p$ and $q$. Moreover, let $\ell = \gcd(p,q)$ and $p = \ell \widehat{p}$, $q = \ell \widehat{q}$.
Thus, $m \geq \ell \widehat{p} \widehat{q} \geq \ell (\widehat{p} + \widehat{q} - 1) = p + q - \ell$.
It means that by \Cref{the:fine_wilf}, the word $V$ has also period $\ell$ and because $V_0 \neq V_1^k$, we have $q = \ell$.

Together we obtain the following equation.
\[
\frac{j}{i} = \frac{\frac{m}{q}}{\frac{m}{p}} = \frac{p}{q} = \frac{\ell \widehat{p}}{\ell} = \widehat{p} \in \N. \qedhere
\]
\end{proof}

We recall that we want to find an estimate on the number $\sigmaC(W)$, where $W$ is the output word of the closed walk $\C = \Twalk{M}{V^\gamma}{W}{M}$ with $V$ primitive and
\begin{equation} \label{eq:C_ub_assumption}
\C \neq \left( \Twalk{M}{V^{\delta_2}}{W_2}{M} \right)^{m_2} \quad \text{ for some } m_2 \geq 2, \delta_2 \in \Z^+ \text{ and } W_2 \in \Dj.
\end{equation}

\begin{remark} \label{re:jednoduchost_nafoukle}
\Cref{co:vyfukovani_ze_symetrickeho} implies that there exists a closed walk $\widehat{\C} = \Twalk{\widehat{M}}{\widehat{V}^\gamma}{\widehat{W}}{\widehat{M}}$ where $\widehat{V} \in \tau_\kappa(V)$ and $\sigmaC(\widehat{W}) \geq \sigmaC(W)$.
If $\widehat{\C}$ can be decomposed into $m_1 \geq 2$ closed walks with the input word $\widehat{V}^{\delta_1}$, then by \eqref{eq:C_ub_assumption} and $\delta_1 m_1 = \gamma \geq 2$ we obtain a contradiction with \Cref{co:vyfukovani_ze_symetrickeho} \Cref{it:1)vyfukovani}.
Thus,
\begin{equation} \label{eq:prostota_nafoukleho_cyklu}
\widehat{\C} \neq \left (\Twalk{\widehat{M}}{\widehat{V}^{\delta_1}}{W_1}{\widehat{M}} \right)^{m_1}
\end{equation}
 for some $\delta_1 \in \Z^+, m_1 \geq 2$ and $W_1 \in \Dj$.
Further, let $m \geq 1$ be the largest possible such that
\[
\widehat{\C} = \left( \Twalk{\widehat{M}}{\widehat{V}_0^{\delta}}{\widehat{W}_0}{\widehat{M}} \right)^{m}
\]
for some $\delta \in \Z^+, \widehat{W}_0, \widehat{V}_0 \in \Dj$, and $\widehat{V}_0$ primitive.
It can happen that $m \neq 1$ or $\gamma \neq \delta$ but in all cases we have $\widehat{V}_0^{\delta m} = \widehat{V}^{\gamma}$.
By \Cref{le:mocniny_slovo}, we have $\frac{\delta m}{\gamma} \in \N$ and by \eqref{eq:prostota_nafoukleho_cyklu}, the numbers $m$ and $\gamma$ are coprime, which means that $\frac{\delta}{\gamma} \in \N$. Therefore,

\begin{equation} \label{eq:prepocty_cyklu}
m \sigmaC(\widehat{V}_0) = \frac{\gamma}{\delta} \sigmaC(\widehat{V}) \leq  \sigmaC(\widehat{V}) = \sigmaC(V) \quad  \text{ and } \quad \sigmaC(W) \leq {\sigmaC(\widehat{W})} = m\sigmaC(\widehat{W}_0).
\end{equation}

Therefore, it is sufficient to make an estimate on $\sigmaC(\Lfz W)$ only for the closed walks $\widehat{\C} = \Twalk{\widehat{M}}{{\Lfz V}^\delta}{\Lfz W}{\widehat{M}}$ where $\Lfz{V} \in \tau_\kappa(\Lfz{V})$.
\end{remark}

\begin{theorem}
Let $\Lfz \C = \Twalk{\Lf M}{\Lfz V^\delta}{\Lfz W}{\Lf M}$ with $\Lfz V$ primitive, $\Lfz V \in \tau_\kappa(\Lfz V)$, and
\[
\Lfz \C \neq \left( \Twalk{\Lf M}{\Lfz V^{\delta_1}}{W_1}{\Lf M} \right)^{m_1} \quad \text{ for some } m_1 \geq 2, \delta_1 \in \Z^+ \text{ and } W_1 \in \Dj.
\]
We have
\begin{equation} \label{eq:odhad_sigma}
\sigmaC(\Lfz W) \leq \sigmaC(\Lfz V) \sum_{M \in \LE{}} \sum_{i=\LclassMax{M}}^{2 \LclassMax{M}-1} \left(  \sigma(W_{L,M,i}) - 1 \right).
\end{equation}
($W_{L,M,i}$ is given by \Cref{le:LS_n_do_RS_n}.)
\end{theorem}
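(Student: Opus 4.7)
The plan is to decompose the closed walk $\Lfz\C$ along the maximal runs of its input word $\Lfz V^\delta$ and bound the number of runs in $\Lfz W$ by summing the contributions at each transition between an $L$-run and an $R$-run (and vice versa).

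Since $\Lfz V \in \tau_\kappa(\Lfz V)$, every run of $\Lfz V$ has length in $\{4n,\dots,5n-1\}$. I would first invoke \Cref{le:spadnu_do_LS} (and its symmetric $R$-counterpart): each long $L$-run in the input drives the walk, after at most $n$ initial letters, into a unique matrix $M\in\LE{}$, after which the remainder of the run is spent cycling around $M$ via the self-loop from \Cref{le:LS_nacteni_n}. An analogous statement handles each $R$-run using $\RE{}$. The transition from a long $L$-run to the following $R$-run is then precisely the situation of \Cref{le:LS_n_do_RS_n}: after absorbing the cycling count appropriately, one is left with $i\in\{\LclassMax{M},\dots,2\LclassMax{M}-1\}$ letters $L$ still to read, and the walk $\Twalk{M}{L^{i}R^{j_{L,M,i}}}{W_{L,M,i}}{N_{L,M,i}}$ arrives at a unique $N_{L,M,i}\in\RE{}$ while producing an output word $W_{L,M,i}$ that begins with $L$, ends with $R$, and has $\sigma(W_{L,M,i})=2\lfloor\xi_{L,M,i}/2\rfloor+2$. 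The symmetric version, justified by \Cref{prop:assoc_sym}, governs each $R$-to-$L$ transition, producing matrices in $\RE{}$ and output words $W_{R,N,i'}$.

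Assembling these pieces, $\Lfz W$ is the cyclic concatenation of the transition words $W_{L,M_t,i_t}$ and $W_{R,N_t,i'_t}$ interleaved with pure $L^{s}$- or $R^{s'}$-cycling blocks. Because each transition word begins and ends with letters matching the surrounding cycling blocks, those blocks merge into adjacent runs without creating new ones; a direct bookkeeping of runs yields
\[
\sigmaC(\Lfz W) \le \sum_{t}\bigl(\sigma(W_{L,M_t,i_t})-1\bigr) + \sum_{t}\bigl(\sigma(W_{R,N_t,i'_t})-1\bigr),
\]
where the two sums range, respectively, over the $LR$- and $RL$-transitions in a single traversal of $\Lfz\C$.

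The final step is to bound each of the two running sums by $\tfrac{\sigmaC(\Lfz V)}{2}\cdot T$, where $T=\sum_{(M,i)\in\LE{}\times\{\LclassMax{M},\dots,2\LclassMax{M}-1\}}(\sigma(W_{L,M,i})-1)$. By \Cref{prop:assoc_sym}, the involution $M\mapsto\assoc M$ restricts to a bijection $\LE{}\to\RE{}$ intertwining $\LclassMax{\cdot}$ with $\RclassMax{\cdot}$ and satisfying $W_{L,M,i}=\assoc{W_{R,\assoc M,i}}$, so the two capacity sums are equal to $T$. To bound each running sum, I would combine the primitivity of $\Lfz V$ with the non-power hypothesis on $\Lfz\C$: if any pair $(M,i)$ occurred with too large a multiplicity along $\Lfz\C$, the closed sub-walk between two of its occurrences would have input equal to a proper power of a conjugate of $\Lfz V$, forcing a decomposition $\Lfz\C = \bigl(\Twalk{\Lf M}{\Lfz V^{\delta_1}}{\bullet}{\Lf M}\bigr)^{m_1}$ with $m_1\ge 2$, contradicting the hypothesis. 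A careful accounting based on this exclusion principle then caps each running sum at $\tfrac{\sigmaC(\Lfz V)}{2}\cdot T$, and adding the two contributions gives the claimed inequality. The main obstacle will be this last step: converting the abstract non-power assumption on $\Lfz\C$ into a quantitative multiplicity bound, while tracking how the transducer state evolves across the $\delta$ passes through $\Lfz V$ and handling the $L$- and $R$-contributions uniformly via \Cref{prop:assoc_sym}.
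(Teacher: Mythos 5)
Your proposal follows essentially the same route as the paper: decompose the closed walk along the long runs of $\Lfz V$ using \Cref{le:LS_n_do_RS_n} and its symmetric version, express $\sigmaC(\Lfz W)$ as a sum of the terms $\sigma(W_{L,M,i})-1$ and $\sigma(W_{R,N,i'})-1$, and bound the result via the non-power hypothesis. The ``careful accounting'' you defer is precisely what the paper carries out by fixing a single run of $\Lfz V$ and observing that the $\delta$ pairs $(M_{k_\ell},i_{k_\ell})$ arising from the $\delta$ passes over that run must be pairwise distinct --- repeating a pair at the same phase of $\Lfz V$ would, by determinism of the transducer, force the walk to become periodic after fewer than $\delta$ copies of $\Lfz V$, contradicting the non-power hypothesis --- so each of the $\sigmaC(\Lfz V)$ runs contributes at most the full inner sum over all admissible $(M,i)$.
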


\begin{proof}
We are interested in the value of $\sigmaC(\Lfz W)$.
Since $\Lfz \C$ is a closed walk, we can choose any its vertex to be the starting vertex of the closed walk, without changing $\sigmaC(\Lfz W)$.
In other words, we may add some assumptions on $\Lf M$, to keep the notation simple.

Since $\Lfz V \in \tau_\kappa(\Lfz V)$, while repeatedly reading $\Lfz V$ and looping on $\Lfz \C$, all the runs we read are of length at least $4n$ and by \Cref{le:max_n_jednoho_pismene,le:spadnu_do_LS,le:LS_nacteni_n}, we are sure to pass through some state from $\LE{}$ at least tree times while reading one run of $L$'s.
We select this vertex as the starting vertex of $\C$, that is $M_0 = \Lf M \in \LE{}$.
Moreover, we may assume that $\C$ starts when we encounter $M_0$ for next-to-last time while reading the current run of $L$'s.
In other words, $L^{i_0}R$ is a prefix of $\Lfz V$ for some $i_0\in \{\LclassMax{M_0}, \LclassMax{M_0} + 1, \dots, 2 \LclassMax{M_0}-1\}$.

By \Cref{le:LS_n_do_RS_n}, the walk $\Lfz \C$ starts with
\[
\Twalk{M_0}{L^{i_0}R^{j_{L,M_0,i_0}}}{W_{L,M_0,i_0}}{N_0},
\]
where $N_0 = N_{L,M_0,i_0} \in \RE{}$.
The next walk on $\Lfz \C$ is
\[
\Twalk{N_0}{R^{q'_0\RclassMax{N_0}}}{R^{q'_0t'_0}}{N_0}
\]
where $t'_0 >0, q'_0 \geq 0$, $t_0'$ is such that $\Twalk{N_0}{\RclassMax{N_0}}{t_0'}{N_0}$ and $q'_0$ is chosen such that after taking this walk, the input word starts with $R^{i'_0}L$ where  $i'_0 \in \{\RclassMax{N_0}, \RclassMax{N_0} + 1, \dots, 2 \RclassMax{N_0}-1\}$.
Note that the case $q'_0 = 0$ is also possible since we may have $\RclassMax{N_0} = n$.

The symmetric version of \Cref{le:LS_n_do_RS_n} implies that the next walk that we take on $\Lfz \C$ is
\[
\Twalk{N_0}{R^{i'_0}L^{j_{R,N_0,i'_0}}}{W_{R,N_0,i'_0}}{M_1},
\]
followed by
\[
\Twalk{M_1}{L^{q_1\LclassMax{M_1}}}{L^{q_1t_1}}{M_1}.
\]
We continue to decompose $\Lfz \C$ in this manner.
This decomposition allows us to identify the output word of $\C$.
Indeed, if we put $\alpha = \frac{\sigmaC(\Lfz V)}{2}$ and recall that the input word is $\Lfz V^\delta$, then we have
\begin{equation} \label{eq:decomposition_of_C}
\Lfz W = W_{L,M_0,i_0} R^{q'_0t'_0} W_{R,N_0,i'_0}L^{q_1t_1} \cdots W_{L,M_{\alpha \delta - 1},i_{\alpha \delta - 1}} R^{q'_{\alpha \delta - 1}t'_{\alpha \delta - 1}} W_{R,N_{\alpha \delta-1},i'_{\alpha \delta - 1}}L^{q_0t_0}.
\end{equation}
By \Cref{le:LS_n_do_RS_n}, each $W_{L,M_k,i_k}$ starts with $L$ and ends with $R$, and by the symmetric version of \Cref{le:LS_n_do_RS_n}, each $W_{R,N_k,i'_k}$ starts with $R$ and ends with $L$.
Therefore, we obtain
\begin{equation} \label{eq:sigma_W_decomposition}
\sigma(\Lfz W) = 1 + \sum_{k = 0}^{\alpha\delta - 1} (\sigma(W_{L,M_k,i_k}) -1  + \sigma(W_{R,N_k,i'_k}) - 1) = \sigmaC(\Lfz W)+1.
\end{equation}

The walk $\Lfz \C$ inputs $\delta$ times the word $\Lfz V$.
We shall now focus on what can happen with a specific run in $\Lfz V$ during those $\delta$ times it is read.
Namely, let $\Lfz V = P_1 L^t P_2$, $P_1,P_2 \in \Dj$ with integer $t$ maximal possible, i.e., $L^t$ is a whole run of $L$'s in $\Lfz V$.
Let $\ell \in \{1,\ldots,\delta\}$ and $k_\ell$ be the integer such that the $\ell$-th reading of the specific run $L^t$ is associated (ends on it) with the walk from $M_{k_\ell}$ to $N_{k_\ell}$ in the above decomposition of $\Lfz \C$.
(We have $k_\ell = k_1 + \alpha (\ell-1)$.)

Assume that for $\ell \neq \ell'$ we have $M_{k_\ell} = M_{k_{\ell'}}$ and $i_{k_\ell} = i_{k_{\ell'}} $.
\Cref{le:LS_n_do_RS_n} implies that $N_{k_\ell} = N_{k_{\ell'}}$ and $j_{L,M_{k_\ell},i_{k_\ell}} = j_{L,M_{k_{\ell'}},i_{k_{\ell'}}}$.
Therefore, after the $\ell$-th and $\ell'$-th reading of the run $L^t$ we stumble upon the same state $N_{k_\ell}$ with the same input word.
This contradicts the assumptions on $\Lfz \C$.

As a consequence, we obtain an upper bound on $\delta$ by enumerating all possibilities on $M_{k_\ell}$ and $i_{k_\ell}$, where $i_{k_\ell} \in \{\LclassMax{M_{k_\ell}}, \LclassMax{M_{k_\ell}} + 1, \dots, 2 \LclassMax{M_{k_\ell}}-1\}$.
In the case that we select to focus on a run of $R$'s, \Cref{prop:assoc_sym} implies that $W_{L,M,i} = \assoc{W_{R,\assoc{M},i}}$ and therefore the resulting upper bound is the same.
Similarly, if we focus on the first run of $L$'s in $V$, which is split in two parts, the very same idea of estimate applies.
Overall, we conclude that
\[
\delta \leq \sum_{M \in \LE{}} \LclassMax{M} = \sum_{N \in \RE{}} \RclassMax{N}.
\]
and that the maximum contribution to $\sigmaC(\Lfz W)$ in \eqref{eq:sigma_W_decomposition} of the $\delta$ reads of one run equals
\[
\sum_{M \in \LE{}} \sum_{i=\LclassMax{M}}^{2 \LclassMax{M}-1} \left(  \sigma(W_{L,M,i}) - 1 \right).
\]
By the symmetry of $R$ and $L$, we need not care if the run is a run of $R$'s or $L$'s as the last number equals $\sum_{N \in \LE{}} \sum_{i=\RclassMax{N}}^{2 \RclassMax{N}-1} \left(  \sigma(W_{R,N,i}) - 1 \right)$.
Using this estimate for all the runs, we finally obtain \eqref{eq:odhad_sigma}.
\end{proof}

\begin{corollary}
If $\Lfz V = V_1 \assoc{V_1}$ for some $V_1 \in \Dj$, then either
\begin{enumerate}[(a)]
\item \label{it:coro_half_sym}
the walk $\Lfz \C$ is symmetric
or
\item \label{it:coro_half_half}
\begin{equation} \label{eq:odhad_sigma_pul}
\sigmaC(\Lfz W) \leq \frac{1}{2} \left( \sigmaC(\Lfz V) \sum_{M \in \LE{}} \sum_{i=\LclassMax{M}}^{2 \LclassMax{M}-1} \left(  \sigma(W_{L,M,i}) - 1 \right) \right).
\end{equation}
\end{enumerate}
\end{corollary}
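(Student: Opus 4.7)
The plan is to compare $\Lfz\C$ to its associate closed walk $\assoc{\Lfz\C} = \Twalk{\assoc{\Lf M}}{\assoc{\Lfz V}^\delta}{\assoc{\Lfz W}}{\assoc{\Lf M}}$, whose existence follows from \Cref{prop:assoc_sym}. Since $\Lfz V = V_1\assoc{V_1}$, we have $\assoc{\Lfz V} = \assoc{V_1}V_1$, so $\Lfz V$ and $\assoc{\Lfz V}$ are cyclic shifts of one another by $|V_1|$ positions. Therefore $\Lfz\C$ and $\assoc{\Lfz\C}$ read the same cyclic input word $\Lfz V^\delta$, but start at different cyclic positions. I would split the argument into two cases according to whether these two closed walks coincide as cyclic walks.

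First, I would handle the case where $\Lfz\C$ and $\assoc{\Lfz\C}$ coincide as cyclic walks. Any such coincidence corresponds to a shift of the form $d = |V_1| + k|\Lfz V|$ with $k \in \{0,\ldots,\delta-1\}$; denoting by $S_0,S_1,\ldots,S_{|\Lfz V|\delta-1}$ the states along $\Lfz\C$, the coincidence amounts to $S_{i+d} = \assoc{S_i}$ for all $i$. Iterating once gives $S_{i+2d} = S_i$, and the primitivity of $\Lfz\C$ then forces $|\Lfz V|\delta \mid 2d$, i.e.\ $\delta \mid 2k+1$. This rules out this case entirely for even $\delta$ and, for odd $\delta$, pins $k = (\delta-1)/2$, so $d = |\Lfz V^\delta|/2$. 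Under this alignment the second half of $\Lfz\C$ has states and input labels which are the $\assoc{\cdot}$-images of those of the first half by \Cref{prop:assoc_sym}, so $\Lfz\C$ decomposes as required by \Cref{le:sym_dvojstav} and is symmetric, yielding \ref{it:coro_half_sym}.

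Otherwise, $\Lfz\C$ and $\assoc{\Lfz\C}$ are distinct as cyclic walks. Here I would invoke the determinism of $\T{}$---the input labels on edges leaving a fixed state form a prefix code (\Cref{thm:Raney_edge})---to observe that if the two walks agreed at any common cyclic position of the input they would coincide at every subsequent position and hence as cyclic walks, a contradiction. Therefore, at each of the $\sigmaC(\Lfz V)/2$ specific L-runs of $\Lfz V$, the $\delta$ pairs $(M,i)$ produced by $\Lfz\C$ across its $\delta$ periods, together with the $\delta$ pairs produced at the same cyclic positions by $\assoc{\Lfz\C}$, form $2\delta$ pairwise distinct elements of $\{(M,i) : M \in \LE{},\ \LclassMax{M} \leq i < 2\LclassMax{M}\}$. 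Summing the contributions $\sigma(W_{L,M,i})-1$ over these $2\delta$ pairs per specific L-run, doing the symmetric bound for specific R-runs, and using $\sigma(\assoc{\Lfz W}) = \sigma(\Lfz W)$ from \Cref{prop:assoc_sym}, I obtain
\[
2(\sigma(\Lfz W) - 1) \leq \sigmaC(\Lfz V) \sum_{M \in \LE{}} \sum_{i=\LclassMax{M}}^{2\LclassMax{M}-1} (\sigma(W_{L,M,i}) - 1).
\]
Dividing by $2$ and using $\sigmaC(\Lfz W) = \sigma(\Lfz W) - 1$ (as in the derivation of equation~\eqref{eq:sigma_W_decomposition} from the preceding theorem) then delivers \ref{it:coro_half_half}.

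The hardest step will be the alignment/divisibility analysis in the first case: one has to show that primitivity forces $\delta \mid 2k+1$ and consequently pins down the shift uniquely for odd $\delta$, after which matching the decomposition to the criterion of \Cref{le:sym_dvojstav} is a direct check via \Cref{prop:assoc_sym}. The cross-walk distinctness in the second case, by contrast, is a clean consequence of the transducer's determinism once the contrapositive is spelled out.
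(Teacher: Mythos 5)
Your proof is correct and follows essentially the same approach as the paper's, which likewise compares the $(M,i)$ pair produced at an L‑run with the $\assoc{\cdot}$‑image of the pair produced at the symmetric R‑run and appeals to non‑symmetry to conclude disjointness. The paper's own proof is considerably terser — it asserts the disjointness without argument — so your explicit comparison of $\Lfz\C$ with $\assoc{\Lfz\C}$, using determinism of $\T{}$ to show that agreement at a cyclic position forces cyclic coincidence and the shift/divisibility analysis to show that cyclic coincidence forces symmetry, fills in exactly the reasoning the paper leaves implicit.
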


\begin{proof}
If the walk $\Lfz \C$ is not symmetric, then, in the estimate \eqref{eq:odhad_sigma} we do not need to count the symmetric possibilities in the following sense.
If when reading a run of $L$'s in $V$, we count the state $M \in \LE$ with $i \in \left\{ \LclassMax{M}, \ldots, 2 \LclassMax{M}-1 \right \}$, then when reading the symmetric run of $R$'s we cannot pass through $\assoc{M} \in \RE$ associated with the integer $i$.
The symmetric run of $R$'s exists due to the assumption $\Lfz V = V_1 \assoc{V_1}$.
Thus, we can count only half of all the possible states $(M_{k_\ell},i_{k_\ell})$, resp. $(N_{k_\ell},i_{k_\ell}')$, which gives the estimate in \cref{it:coro_half_half}.
\end{proof}

We now transform the last corollary into the terms of the period of the continued fraction of $x$ after the transformation.

\begin{theorem} \label{the:odhad_pi}
Let $x$ be a quadratic irrational number and $N \in \D{}$.
We have
\[
\per(h_N(x)) \leq \per(x)\sum_{M \in \LE{}} \sum_{i=\LclassMax{M}}^{2 \LclassMax{M}-1} \left(  \sigma(W_{L,M,i}) - 1 \right).
\]
\end{theorem}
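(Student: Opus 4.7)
The plan is to reduce the problem to the closed-walk analysis of the previous subsection and then to translate the $\sigmaC$-bound there into a $\per$-bound via \Cref{le:vypocet_per}.

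First I would apply \Cref{thm:dostanu_se_do_Dn} to replace $N$ by some $N' \in \DB{}$ and $x$ by a positive quadratic irrational $y$ with $\per(h_N(x)) = \per(h_{N'}(y))$ and $\per(y) = \per(x)$ (the latter because the Raney reduction strips only a finite prefix of the LR expansion). Fix a repetend $V$ of the LR expansion of $y$ whose first and last letters differ---such a rotation exists because the repetend of a quadratic irrational contains both $L$ and $R$. By finiteness of $\DB{}$, the computation in $\T{}$ on input $y$ enters, after some prefix, a closed walk $\C = \Twalk{M}{V^\gamma}{W}{M}$ whose output word $W$ determines the repetend of $h_{N'}(y)$; \Cref{le:vypocet_per} gives $\per(y) \in \{\sigmaC(V)/2,\sigmaC(V)\}$ and $\per(h_{N'}(y)) \in \{\sigmaC(W)/2,\sigmaC(W)\}$, with the smaller value occurring precisely when the corresponding word admits an $\assoc{\cdot}$-symmetric factorization.

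Next I would apply \Cref{co:vyfukovani_ze_symetrickeho} followed by \Cref{re:jednoduchost_nafoukle} to extract from $\C$ a primitive closed walk $\Lfz\C = \Twalk{\Lf M}{\Lfz V^\delta}{\Lfz W}{\Lf M}$ with $\Lfz V$ primitive, $\Lfz V \in \tau_\kappa(\Lfz V)$, $\Lfz\C$ not a proper power, together with a multiplicity $m \geq 1$ satisfying $\sigmaC(W) \leq m\,\sigmaC(\Lfz W)$ and $m\,\sigmaC(\Lfz V) \leq \sigmaC(V)$. The preceding theorem then yields $\sigmaC(\Lfz W) \leq \sigmaC(\Lfz V)\cdot S_1$, where $S_1$ denotes the double sum in the statement, so chaining the three inequalities produces
\[
\sigmaC(W) \leq \sigmaC(V)\cdot S_1.
\]

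Finally I would convert this into the required period inequality by case analysis. If $V$ is not $\assoc{\cdot}$-symmetric, $\per(y) = \sigmaC(V)$ and the claim is immediate; if $V = V_1\assoc{V_1}$ but $W$ admits an $\assoc{\cdot}$-symmetric factorization, then both periods are halved and the claim is again immediate. The only remaining case is $V = V_1\assoc{V_1}$ with $W$ non-symmetric, where the halved estimate $\sigmaC(W) \leq \sigmaC(V)\cdot S_1/2$ is needed. For this I appeal to the corollary following the preceding theorem: symmetry of $V$ propagates to $\Lfz V$ (using that $\tau_\kappa$ and the primitivization in \Cref{re:jednoduchost_nafoukle} commute with $\assoc{\cdot}$), so the corollary gives either the halved bound $\sigmaC(\Lfz W) \leq \sigmaC(\Lfz V)\cdot S_1/2$ (multiplying by $m$ yields what we need) or else the symmetry of $\Lfz\C$, in which case \Cref{co:vyfukovani_ze_symetrickeho}, item~\ref{it:2)vyfukovani}, propagates symmetry back to $\C$, and \Cref{le:sym_dvojstav} then forces $W$ to be symmetric---contradicting the case assumption. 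The main obstacle will be this last piece of bookkeeping, in particular verifying that the $\assoc{\cdot}$-symmetric factorization of $V$ survives both the $\kappa$-inflation and the subsequent primitivization; once this is checked, the remainder of the proof is a routine assembly of results already established.
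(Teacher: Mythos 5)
Your proposal is correct and takes essentially the same route as the paper's own proof: reduce via \Cref{thm:dostanu_se_do_Dn}, extract the eventual closed walk $\C$, apply \Cref{co:vyfukovani_ze_symetrickeho} together with \Cref{re:jednoduchost_nafoukle} to produce the $\kappa$-normalized primitive closed walk $\Lfz\C$ with the multiplicity $m$ and the inequalities of \eqref{eq:prepocty_cyklu}, invoke the $\sigmaC$-estimate of the preceding theorem, and then chain the resulting inequality into a period bound using \Cref{le:vypocet_per}. The only discernible difference is organizational: you partition by the symmetry status of $V$ and $W$ and then invoke the corollary's dichotomy inside the one hard case, whereas the paper partitions first on whether $\widehat{\C}_0$ is symmetric and then on $W$, $V$, and $\widehat{V}_0$; the two case trees cover the same situations, appeal to the same lemmas, and terminate in the same three kinds of inequality chain. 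The piece you flag as the remaining obstacle---that a $V_1\assoc{V_1}$-factorization of $V$ is inherited by $\widehat{V}_0$ after $\tau_\kappa$ and primitivization---is precisely the implication the paper states in the contrapositive in its case (B2) without further elaboration, so your assessment of where the bookkeeping effort lies matches the paper.
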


\begin{proof}
Let $V$ be the repetend of the LR-representation of $x$.
It implies that $V$ is primitive.
Using~\Cref{thm:dostanu_se_do_Dn}, we may assume that the calculation of the tail of $h_N(x)$ is given by a closed walk $\C = \Twalk{M}{V^{\gamma}}{W}{M}$ satisfying \eqref{eq:C_ub_assumption}.

We find the closed walk $\widehat{\C}_0 = \Twalk{\widehat{M}}{\widehat{V}_0^{\delta}}{\widehat{W}_0}{\widehat{M}}$ with $\widehat{V}_0^{\delta m} \in \tau_\kappa(V^\gamma)$, $\widehat{V_0}$ primitive and $\widehat{M} \in \DB{}$ given by \Cref{co:vyfukovani_ze_symetrickeho} and \Cref{re:jednoduchost_nafoukle}, using the notation therein.
According to \eqref{eq:prepocty_cyklu}, we have
\begin{equation} \label{eq:pf:nafuk}
\sigmaC(W) \leq m \sigmaC(\widehat{W}_0),
\end{equation}
 \begin{equation} \label{eq:pf:V}
m \sigmaC(\widehat{V}_0) \leq \sigmaC(V),
\end{equation}

from \Cref{le:vypocet_per} it follows that
    \begin{equation} \label{eq:pf:A2}
      \per(h_N(x)) \leq \sigmaC(W)
    \end{equation}

and that
 \begin{equation} \label{eq:pf:per_sigma}
 \frac{1}{2} \sigmaC(V) \leq \per(x).
 \end{equation}

Set $S = \sum_{M \in \LE{}} \sum_{i=\LclassMax{M}}^{2 \LclassMax{M}-1} \left(  \sigma(W_{L,M,i}) - 1 \right)$.

% le:vypocet_per
We split the proof into several cases.

\begin{enumerate}[label=(\textbf{\Alph*})]
  \item $\widehat{\C}_0$ is symmetric.

  $\widehat{V}_0 = V_1\assoc{V_1}$ for some $V_1 \in \Dj$.

  \begin{enumerate}[label=(\textbf{\Alph{enumi}}\arabic*)] %[label*=\arabic*]
    \item $W = W_1\assoc{W_1}$ for some $W_1 \in \Dj$.

  By \Cref{le:vypocet_per} we have $\per(h_N(x)) \leq \frac{\sigmaC(W)}{2}$. Therefore:

    \[
    \per(h_N(x)) \leq \frac{\sigmaC(W)}{2} \overset{\eqref{eq:pf:nafuk}}{\leq} \frac{1}{2} m\sigmaC(\widehat{W}_0) \overset{\eqref{eq:odhad_sigma}}{\leq} \frac{1}{2} m \sigmaC(\widehat{V}_0) S \overset{\eqref{eq:pf:V}}{\leq}\frac{1}{2} \sigmaC(V)S \overset{\eqref{eq:pf:per_sigma}}{\leq} \per(x)S.
    \]

    \item $W \neq W_1\assoc{W_1}$ for all $W_1 \in \Dj$

    \begin{enumerate}[label=(\textbf{\Alph{enumi}}\arabic{enumii}\roman*)] %[label*=\roman*]
      \item $V = V_2\assoc{V_2}$ for some $V_2 \in \Dj$

      By \Cref{co:vyfukovani_ze_symetrickeho} \Cref{it:2)vyfukovani}, we obtain a contradiction with \Cref{le:sym_dvojstav}.

      \item $V \neq V_2\assoc{V_2}$ for all $V_2 \in \Dj$.

      By \Cref{le:vypocet_per} we have $\sigmaC(V) = \per(x)$. Therefore:

      \[
      \per(h_N(x)) \overset{\eqref{eq:pf:A2}}{\leq} \sigmaC(W) \overset{\eqref{eq:pf:nafuk}}{\leq} m \sigmaC(\widehat{W}_0) \overset{\eqref{eq:odhad_sigma}}{\leq} m \sigmaC(\widehat{V}_0) S \overset{\eqref{eq:pf:V}}{\leq} \sigmaC(V) S \overset{}{=} \per(x)S.
      \]
    \end{enumerate}
  \end{enumerate}
  \item $\widehat{\C}_0$ is not symmetric

  \begin{enumerate}[label=(\textbf{\Alph{enumi}}\arabic*)] %[label*=\arabic*]
    \item $\widehat{V}_0 = V_1\assoc{V_1}$ for some $V_1 \in \Dj$. Therefore:

    \[
      \per(h_N(x)) \overset{\eqref{eq:pf:A2}}{\leq} \sigmaC(W) \overset{\eqref{eq:pf:nafuk}}{\leq} m\sigmaC(\widehat{W}_0) \overset{\eqref{eq:odhad_sigma_pul}}{\leq} \frac{1}{2} m \sigmaC(\widehat{V}_0) S \overset{\eqref{eq:pf:V}}{\leq} \frac{1}{2} \sigmaC(V) S \overset{\eqref{eq:pf:per_sigma}}{\leq} \per(x)S.
    \]

    \item $\widehat{V}_0 \neq V_1\assoc{V_1}$ for all $V_1 \in \Dj$

    The fact that $\widehat{V}_0 \neq V_1\assoc{V_1}$ implies $V \neq V_2\assoc{V_2}$ for all $V_2 \in \Dj$. By \Cref{le:vypocet_per} we have $\sigmaC(V) = \per(x)$. Therefore:

    \[
      \per(h_N(x)) \overset{\eqref{eq:pf:A2}}{\leq} \sigmaC(W) \overset{\eqref{eq:pf:nafuk}}{\leq} m\sigmaC(\widehat{W}_0) \overset{\eqref{eq:odhad_sigma}}{\leq} m \sigmaC(\widehat{V}_0) S  \overset{\eqref{eq:pf:V}}{\leq} \sigmaC(V) S \overset{}{=} \per(x)S. \qedhere
    \]

  \end{enumerate}
\end{enumerate}
\end{proof}

As the last theorem holds for all quadratic irrational numbers $x$ and all matrices $N \in \D{}$, the proof of \Cref{thm:main} follows.

\begin{proof}[Proof of \Cref{thm:main}]
Trivially, we may assume that $N \in \D{}$.

We shall first prove the upper bound on $\per(h_N(x))$ which follows directly from \Cref{the:odhad_pi} with
\[
S_n =  \sum_{M \in \LE{}} \sum_{i=\LclassMax{M}}^{2 \LclassMax{M}-1} \left(  \sigma(W_{L,M,i}) - 1 \right).
\]

By \Cref{def:LE_n}, we have $M \in \LE{} \iff M = M_{t,u}= \mat{t}{0}{u}{m}$ where $mt = n$ and if we put $g_t = \gcd(m,t) = \gcd(t,\frac{n}{t})$, $I_{t} = \begin{cases} \{0\} & \text{ for } g_t = 1 \\ \{1, \dots, g_t - 1\} & \text{ otherwise}\end{cases}$, then $u \in I_{t}$.
It follows from \Cref{le:LS_n_do_RS_n} that we have
\begin{equation} \label{eq:prepocet_sumy}
 S_n
 =
 \sum_{\substack{ t \in \N \\ t \mid n}} \sum_{u \in I_{t}} \sum_{i=\LclassMax{M_{t,u}}}^{2 \LclassMax{M_{t,u}}-1} \left( 2\left\lfloor \frac{\xi(im+u, t)}{2} \right\rfloor + 1 \right).
\end{equation}

We rearrange the two last sums into one.
By \Cref{le:char_toceni_na_L}, $\LclassMax{M_{t,u}}$ is the least positive integer such that
\begin{equation} \label{eq:ht}
m \nu_L(M_{t,u}) = ht
\end{equation}
for some $h \in \N \setminus \{0\}$ and therefore $\nu_L(M_{t,u}) = \frac{t}{g_t}$. Therefore, we have
$i_1m+u \not \equiv i_2m+u \pmod{t}$ for all $i_1, i_2 \in \{\nu_L(M_{t,u}),\nu_L(M_{t,u})+1,\dots,2\nu_L(M_{t,u})-1\}, i_1 \neq i_2$.
It means that $\{im+u \pmod{t} \colon  i \in \{\nu_L(M_{t,u}),\nu_L(M_{t,u})+1,\dots,2\nu_L(M_{t,u})-1\}\} = \{k \colon k \in \N,k<t,k \equiv u \pmod{g_t}\}$ and $\# \{k \colon k \in \N,k<t,k \equiv u \pmod{g_t}\} = \nu_L(M_{t,u})$.

Let $J_t  = \begin{cases}  \emptyset & \text{for } g_t = 1, \\ \{i g_t \colon i \in \N\} & \text{ otherwise.} \end{cases}$.
Together with the facts that $u = 0 $ for $g_t = 1$ and $u \in \{1, \dots, g_t - 1\}$ otherwise we have
\[
\{im+u \! \! \pmod{t} \colon  i \in \{\nu_L(M_{t,u}),\nu_L(M_{t,u})+1,\dots,2\nu_L(M_{t,u})-1\}, u \in I_t\} =
\{k \colon k \in \N,k<t, k \not \in J_t\}
\]
and $\# \{k \colon k \in \N,k<t, k \not \in J_t\} = \sum_{u \in I_{t}}\nu_L(M_{t,u})$.
Now it remains to realize that for all $i \geq \nu_L(M_{t,u})$ we have by \eqref{eq:ht} that $im + u \geq th+u \geq t$ and by definition of $\xi$, we have $\xi(k,t) = \xi(at+k,t)$ for all $a \in \N$ and $k\geq t$. We conclude
\[
\sum_{u \in I_{t}} \sum_{i=\LclassMax{M_{t,u}}}^{2 \LclassMax{M_{t,u}}-1} \left( 2\left\lfloor \frac{\xi(im+u, t)}{2} \right\rfloor + 1 \right) = \sum_{\substack{j = t \\ j \not \in J_t }}^{2t-1} \left (2\left\lfloor \frac{\xi(j,t)}{2} \right\rfloor +1 \right)
\]
which together with \eqref{eq:prepocet_sumy} proves the upper bound.

As the inverse M\"obius transformation preserves the determinant of its associated matrix, i.e, $h_N^{-1} = h_{N'}$ for some $N' \in \D{}$, the lower bound follows from the upper bound.
\end{proof}

\section{Concluding remarks and experiment results} \label{sec:conclusion}

We have tested the obtained upper bound of \Cref{thm:main} for various values of $n$.
Some of the results can be seen in \Cref{tab:prodlouzeni}.
The experiments indicate that the upper bound is sharp for $n=2$, all prime $n$ with $n \equiv 3 \pmod 4$ and for some composite numbers (for example $n = 9,14,27$).
For some other values of $n$ the difference between our estimate $S_n$ and the experimentally obtained factor of prolongation (denoted $S_n(x)$ in the table) can be relatively large (for example for $n = 18$).

{
\renewcommand*{\arraystretch}{1.2}
\begin{table}[!htb]
\centering
 \begin{tabular}{c|c|c|c}
 $n$ & $S_n$ & $S_n(x)$ & $x$  \\
 \hline
7 & 24 & 24.0 & $[\overline{4390}]$ \\
8 & 36 & 26.0 & $[\overline{4792, 4423}]$ \\
9 & 36 & 36.0 & $[\overline{4696}]$ \\
13 & 52 & 51.0 & $[\overline{4771, 4930}]$ \\
14 & 80 & 80.0 & $[\overline{4693}]$ \\
15 & 76 & 67.2 & $[2904, 189, \overline{4662, 4147, 4872, 4669, 4875}]$ \\
18 & 120 & 68.0 & $[\overline{4908, 4057}]$ \\
20 & 120 & 104.4 & $[4495, 520, \overline{4803, 4060, 4805, 4930, 4643}]$ \\
24 & 164 & 90.0 & $[\overline{4380, 4843}]$ \\
27 & 144 & 144.0 & $[\overline{4384}]$ \\
81 & 538 & 532.0 & $[\overline{4232}]$ \\
\end{tabular}
  \caption{Experimental results on the bound of \Cref{thm:main}. $S_n(x)$ denotes an experimental lower bound on $\sup \left\{ \frac{ \per(h_M(x))}{ \per(x)} \colon M \in \D{} \right\}$.}
  \label{tab:prodlouzeni}
\end{table}
}

The difference between $S_n$ and $S_n(x)$ is caused by the fact that in some cases the closed walk $\C$ cannot go through all of the transitions that we have considered in the estimate \eqref{eq:odhad_sigma}.
For composite numbers, a sharper estimate depends on the value of $n$ and its divisors.
If $n$ is prime, the sharp bound may be proven to be
\begin{equation} \label{eq:n_prime}
\begin{aligned}
\sup \left\{ \frac{ \per(h_M(x))}{ \per(x)} \colon M \in \D{} \text{ and } x \text{ is quadratic irrational} \right\} = \\
= \begin{cases} 5 & \text{ if } n = 2, \\
\displaystyle 2 + 2\sum_{i=1}^{\frac{n-1}{2}} (\xi(i,n) + 2)  & \text{ if } n \equiv 3 \pmod{4}, \\
\displaystyle 1 + 2\sum_{i=1}^{\frac{n-1}{2}} (\xi(i,n) + 2)  & \text{ if } n \equiv 1 \pmod{4}.
\end{cases}
\end{aligned}
\end{equation}
For $n \equiv 3 \pmod{4}$ the bound in fact equals $S_n$, the formula is only simplified.
The bound for $n \equiv 3 \pmod{4}$ equals $S_n-1$, corresponding to the case when $\C$ cannot pass through all possible vertices.
A corresponding experiment is for $n=13$ in \Cref{tab:prodlouzeni}.

We do not give a proof of the formula \eqref{eq:n_prime} as it is only for a very special case and requires some more technical claims.

We do not provide experiment results on the lower bound as the behaviour is completely analogous, one only needs to consider the inverse of the given M\"obius transformation.

\section*{Acknowledgements}

The work was supported by the Ministry of Education, Youth and Sports of the Czech Republic, project no. CZ.02.1.01/0.0/0.0/16\_019/0000778.
H. Ř. acknowledges support by the Grant Agency of the Czech Technical University in Prague, grant No. SGS17/193/OHK4/3T/14.
The computer experiments were done using the computer algebra system SageMath \cite{sage_2018}.

\bibliographystyle{siam}
\IfFileExists{biblio.bib}{\bibliography{biblio}}{\bibliography{../biblio/biblio}}

\begin{thebibliography}{10}

\bibitem{Vladimi-2009}
{\sc V.~I. Arnold}, {\em Lengths of periods of continued fractions of square
  roots of integers}, Funct. Anal. Other Math., 2 (2009), pp.~151--164.

\bibitem{BaHru}
{\sc L.~Balková and A.~Hrušková}, {\em Continued fractions of square roots
  of natural numbers}, Acta Polytechnica, 53 (2013), pp.~322--328.

\bibitem{cohn1977length}
{\sc J.~H.~E. Cohn}, {\em The length of the period of the simple continued
  fraction of $d^{1/2}$.}, Pacific J. Math., 71 (1977), pp.~21--32.

\bibitem{fine_wilf}
{\sc N.~J. Fine and H.~S. Wilf}, {\em Uniqueness theorems for periodic
  functions}, P. Am. Math. Soc., 16 (1965), pp.~109--114.

\bibitem{Gosper}
{\sc B.~Gosper}, {\em Continued fraction arithmetic}.
\newblock http://perl.plover.com/yak/cftalk/INFO/gosper.txt.

\bibitem{Kurka2016}
{\sc P.~Kůrka}, {\em Dynamics of Number Systems}, Springer International
  Publishing, 2016.

\bibitem{LaSh97}
{\sc J.~C. Lagarias and J.~O. Shallit}, {\em Linear fractional transformations
  of continued fractions with bounded partial quotients}, J. Théor. Nombres
  Bordeaux, 9 (1997), pp.~267--279.

\bibitem{LiSta}
{\sc P.~Liardet and P.~Stambul}, {\em Algebraic computations with continued
  fractions}, J. Number Theory, 73 (1998), pp.~92--121.

\bibitem{Liu}
{\sc W.~Liu}, {\em The {M}öbius transformation of continued fractions with
  bounded upper and lower partial quotients}, preprint available at
  \url{https://arxiv.org/abs/1609.08233},  (2016).

\bibitem{niqui}
{\sc M.~Niqui}, {\em Exact arithmetic on the {Stern}–{Brocot} tree}, J.
  Discrete Algorithms, 5 (2007), pp.~356--379.

\bibitem{Podsypanin}
{\sc E.~V. Podsypanin}, {\em Length of the period of a quadratic irrational},
  J. Math. Sci., 18 (1982), pp.~919--923.

\bibitem{Pohl07}
{\sc A.~Pohl}, {\em An upper bound for the period length of a quadratic
  irrational}, Abh. Math. Sem. Hamburg, 77 (2007), pp.~129--136.

\bibitem{Raney1973}
{\sc G.~N. Raney}, {\em On continued fractions and finite automata}, Math.
  Ann., 206 (1973), pp.~265--284.

\bibitem{Rockett-1990}
{\sc A.~M. Rockett and P.~Szüsz}, {\em On the lengths of the periods of the
  continued fractions of square-roots of integers}, Forum Math., 2 (1990),
  pp.~119--123.

\bibitem{sage_2018}
{\sc {The Sage Developers}}, {\em {S}ageMath, the {S}age {M}athematics
  {S}oftware {S}ystem}, 2018.
\newblock {\texttt http://www.sagemath.org}.

\end{thebibliography}

\end{document}